\newtheorem{theorem}{Theorem}[section]
\newtheorem{thmy}{Theorem}
\newenvironment{thmx}{\stepcounter{theorem}\begin{thmy}}{\end{thmy}}
\newtheorem*{introtheorem*}{Main Theorem}
\newtheorem{corollary}[theorem]{Corollary}
\newtheorem{lemma}[theorem]{Lemma}
\newtheorem{proposition}[theorem]{Proposition}
\theoremstyle{definition}
\newtheorem{definition}[theorem]{Definition}
\newtheorem*{defn*}{Definition}
\newtheorem{ex}[theorem]{Example}
\newtheorem{remark}[theorem]{Remark}
\newtheorem{notation}[theorem]{Notation}
\newtheorem{setting}[theorem]{Setting}
\theoremstyle{remark}
\numberwithin{equation}{theorem}
\newcommand{\BZ}{{\mathbb Z}}
\newcommand{\kk}{{\sf{k}}}
\newcommand{\m}{{\mathfrak m}}
\newcommand{\fh}{{\mathfrak h}}
\newcommand{\sym}{\mathfrak{S}}
\def\Hilb{\operatorname{Hilb}}
\def\ann{\operatorname{ann}}
\def\ch{\operatorname{char}}
\def\init{\operatorname{in}}
\def\rank{\operatorname{rank}}
\def\reg{\operatorname{reg}}
\def\Ann{{\operatorname{Ann}}}
\def\Hess{{\operatorname{Hess}}}
\def\Tor{{\operatorname{Tor}}}
\newcommand{\HH}{\operatorname{H}}
\def\Soc{\operatorname{Soc}}
\def\Supp{\operatorname{Supp}}
\def\ov{\overline}
\newcommand{\Pu}[1]{P_{\,[#1]}}
\title[]{A study of a quadratic almost complete intersection ideal and its linked Gorenstein ideal}
\keywords{almost complete intersections, Lefschetz properties, Betti numbers, linkage}
\subjclass[2020]{13D02, 13C13, 13P10}
\author[Diethorn]{Rachel Diethorn}
\email{rdiethor@oberlin.edu}
\address{Department of Mathematics, Oberlin College, Oberlin, OH}
\author[G\"unt\"urk\"un]{Sema G\"unt\"urk\"un}
\email{s.gunturkun@essex.ac.uk}
\address{School of Mathematics, Statistics and Actuarial Science, University of Essex, UK}
\author[Hardesty]{Alexis Hardesty}
\email{ahardesty1@twu.edu}
\address{Division of Mathematics, Texas Woman's University, Denton, TX}
\author[Mete]{Pinar Mete}
\email{pinarm@balikesir.edu.tr}
\address{Department of Mathematics, Balikes\.{ı}r University, Turkey}
\author[\c{S}ega]{Liana \c{S}ega}
\email{segal@umkc.edu}
\address{Division of Computing, Analytics and Mathematics, University of Missouri-Kansas City, Kansas City, MO}
\author[Sobieska]{Aleksandra Sobieska}
\email{sobieskasnyd@marshall.edu}
\address{Department of Mathematics \& Physics, Marshall University, Huntington, WV}
\author[Veliche]{Oana Veliche}
\email{o.veliche@northeastern.edu}
\address{Department of Mathematics, Northeastern University, Boston, MA}
\begin{document}

\begin{abstract}
We examine the ideal $I=(x_1^2, \dots, x_n^2, (x_1+\dots+x_n)^2)$ in the polynomial ring $Q=\kk[x_1, \dots, x_n]$, where $\kk$ is a field of characteristic zero or greater than $n$. We also study the Gorenstein ideal $G$ linked to $I$ via the complete intersection ideal $(x_1^2, \dots, x_n^2)$. We compute the Betti numbers of $I$ and $G$ over $Q$ when $n$ is odd and extend known computations when $n$ is even. A consequence is that the socle of $Q/I$ is generated in a single degree (thus $Q/I$ is level) and its dimension is a Catalan number.  We also describe the generators and the initial ideal with respect to reverse lexicographic order for the Gorenstein ideal $G$. 
\end{abstract}
\maketitle
\section{Introduction}
The ring $R=\kk[x_1, \dots, x_n]/I$ with $I=(x_1^2, \dots, x_n^2, (x_1+\dots+x_n)^2)$ and $\kk$ a field emerged recently as a protagonist of several papers (\cite{BV}, \cite{kling}) that are concerned with the failure of the Weak Lefschetz Property. This ring belongs to the class of almost complete intersection rings generated by $n+1$ general forms, studied by Migliore and Mir\'o-Roig in \cite{MM}. It satisfies many properties that hold under generic assumptions, due to the known fact that the complete intersection ring $\kk[x_1, \dots, x_n]/J$ with $J=(x_1^2, \dots, x_n^2)$ has the Strong Lefschetz Property when the characteristic of $\kk$ is zero (see \cite{Stanley80,Watanabe85, RRR}) or greater than $n$ (see \cite{cook}). 

For $n\geq 2$, set $Q=\kk[x_1, \dots, x_n]$ and $\ell=\left\lfloor\frac{n-2}{2}\right\rfloor$. Since $J$ is a complete intersection, the linked ideal $G=J:I$ defines a Gorenstein ring $A=Q/G$. When $n$ is odd, we describe the graded Betti numbers of $R$ and $A$ over $Q$ in \cref{c:RoverQ betti} and \cref{c:AoverQ betti}, respectively. For example, the Betti table for $R$ over $Q$ when $n\ge 7$ is shown below. 

\begin{thmx}[\cref{c:RoverQ betti}]\label{A}
Let $n\geq 7$ be an odd integer and $\kk$ a field with $\ch\kk>n$ or $\ch\kk =0$. Then the Betti table of $R$ over $Q$ is: 

\begin{center}
\setlength{\tabcolsep}{3pt} 
\begin{tabular}{r| c c c c c c c c c c c c c c }
& \small{$0$} & \small{$1$} & \small{$2$} & \small{$3$}  & $\ldots$ & \small{$\ell$} & \small{$\ell+1$} & \small{$\ell+2$} & \small{$\ell+3$} & \small{$\ell+4$} & $\ldots$ & \small{$n-1$} & \small{$n$} \\
\hline
$0$ & $\binom{n+1}{0}$ & - & - & - & $\cdots$  & - & - & - & 
- & - & $\cdots$ & - &-\\ 
$1$ & -  & $\binom{n+1}{1}$ & - & - &$\cdots$ & - & - & - & - & - & $\cdots$ & -  & - \\ 
\vphantom{\huge{A}}$\vdots$&$\vdots$&$\vdots$&$\vdots$&$\vdots$&$\ddots$&$\vdots$&$\vdots$&$\vdots$&$\vdots$ & $\vdots$&$\vdots$&$\vdots$&$\vdots$ \\ 
\vphantom{\huge{A}} $\ell$ &  - & - & - & - & - &$\binom{n+1}{\ell}$ &- &- & - & - & $\cdots$ & - & - \\
\vphantom{\huge{A}}$\ell+1$ & - & - & $\rho_{2}$ &$\rho_3$ &$\cdots$ & $\rho_{_{\ell}}$ & $\rho_{_{\ell+1}}+\binom{n}{
\ell}$ & $\rho_{_{\ell+2}}$ & $\rho_{_{\ell+3}}$ & $\rho_{_{\ell}}$ &$\ldots$ & $\rho_{2}$ & - \\
$\ell+2$& - & - & - & $\rho_2$ &$\cdots$ &$\rho_{_{\ell-1}}$& $\rho_{_{\ell}}$ & $\rho_{_{\ell+1}}$ & $\rho_{_{\ell+2}}$ & $\rho_{_{\ell+3}}$ & $\cdots$ &  $\rho_{3}$ &$\rho_{2}$ \\
\end{tabular}
\end{center}
where $\rho_i=\rho_i(n-1)$ is  defined recursively in \cref{not: rho} in terms of binomial coefficients and $\rho_2$ is equal to the Catalan number $C_{\ell+2}$. 
\end{thmx}

In the Betti table above, the entries marked ``-'' are, by convention, $0$. In particular, it follows that $R$ has socle generated in a single degree (in other words, $R$ is level) and the dimension of the socle of $R$ is equal to $C_{\ell+2}$. In fact, these statements hold for all $n\ge 2$, regardless of parity (see \cref{Cataln-socle-gens}).  

When $n$ is even, the Hilbert functions of the rings $R$ and $A$ and their Betti tables over $Q$ can be obtained as special cases of results of \cite{MM} on  almost complete intersection ideals generated by $n+1$ general forms. In \cref{betti RQ} and \cref{betti AQ} we revisit these results and extend them by recording symmetries  implied by  Matlis duality. To establish the Betti tables when $n$ is odd, we first argue in \cref{l:R-ezd} that $x_n$ is an exact zero divisor (as defined in \cite{HenriquesSega11}) for both rings $R$ and $A$, and leverage this to conclude that $R$ and $A$ are liftable to $Q$ in the sense of \cite{Dao} (see \cref{t:RAoverQ}).  Finally we use the Betti tables from the even case to complete the proofs of \cref{c:RoverQ betti} and \cref{c:AoverQ betti}.

A consequence of the Betti tables of $R$ and $A$ is that the ideal $G/J$ is generated in degree $\ell+1$ by $C_{\ell+2}$ elements (see \cref{Cataln-socle-gens}). In \cref{thm: G generators} we describe the ideal $G$ and its initial ideal with respect to reverse lexicographic order as follows, where $(f)_{\sym_n}$ denotes the principal symmetric ideal generated by $f\in Q$, which is the ideal generated by all polynomials obtained from $f$ by permuting the variables. 

\begin{thmx}[\cref{thm: G generators}]
\label{B}
If $\kk$ a field with $\ch\kk>n$ or $\ch\kk =0$, then the Gorenstein ideal  $G$ is generated as follows:
\begin{align*}
G = \begin{cases}
J+\left((x_1-x_2)\cdots (x_{n-2}-x_{n-1})\right)_{\sym_n}, &\text{if $n$ is odd}\\
J+\left((x_1-x_2)\cdots (x_{n-3}-x_{n-2})x_{n-1}\right)_{\sym_n}, &\text{if $n$ is even}\;.
\end{cases}
\end{align*}
and the initial ideal of $G$ is 
\[
\init_{>}(G)=J+\big(x_{i_1}x_{i_2}\dots x_{i_{\ell+1}}\,\vert\, 0<i_1<\dots<i_{\ell+1}, \, i_j\le 2j\text{ for all $j\in [\ell+1]$}\big)\,.
\]
In particular, $G$ has a Gr\"obner basis generated in the same degrees as $G$. 
\end{thmx}

Our work is motivated by the desire to understand ideals generated by $n+1$ general quadrics, in the spirit of \cite{MM}. We subsequently pursue this goal in \cite{aci}, where we study (infinite) minimal free resolutions over rings defined by such ideals. There, we parametrize the ideals by a projective space, and prove that properties of interest hold for ideals corresponding to a nonempty open set. The rings $R$ and $A$ in this paper are precisely the ones used in \cite{aci} to establish that the  open sets constructed there are nonempty, and thus our results are intended to lay the groundwork for a more general investigation. 

The paper is organized as follows. In \cref{sec:prelim} we collect some preliminary facts regarding Hilbert functions, Catalan numbers, and Lefschetz properties which we will use throughout the paper. In \cref{sec:consequences} we work with a more general setting towards establishing various consequences of Lefschetz properties needed in the proofs of our main results. In particular, we investigate a more general class of quadratic almost complete intersections, which contains the ring $R$, and their linked Gorenstein rings, and we compute their Betti numbers when $n$ is even. 
In \cref{sec: aci exp} we apply the results from \cref{sec:consequences} to the rings $R$ and $A$ and we compute their Betti numbers when $n$ is odd, proving \cref{A}. In \cref{G} we prove \cref{B} and we show that the Gorenstein ring $A$ satisfies the Strong Lefschetz Property when $\ch \kk=0$, using the Hessian criterion for the Macaulay inverse form of $G$.  

\section{Preliminaries}\label{sec:prelim}

In this section we record notation, terminology and some standard results concerning Hilbert functions, Catalan numbers and Lefschetz properties.  
Let $S$ be a standard graded Noetherian $\kk$-algebra with $\kk$ a field and $M$ a finitely generated graded $S$-module with graded components $M_n$.  

We denote by $\fh_M : \BZ \to \BZ_{\geq 0}$ the Hilbert function of $M$; that is,
\begin{align*}
\fh_M(n)&=\dim_{\kk}M_n\quad \text {for all $n\in \mathbb Z$}. 
\end{align*}
We also denote by $\Hilb_M(t)$ the Hilbert series of $M$; that is,
\begin{align*}
\Hilb_M(t)=\sum_{n= m}^{\infty}\fh_M(n)t^n, 
\end{align*}
where $m$ is the smallest integer with $\fh_M(m)\ne 0$.

For $i,j\in \mathbb Z$ we denote by $\beta_{i,j}^S(M)$ the $i$th graded Betti number of $M$ over $S$ in internal degree $j$, namely 
\[
\beta_{i,j}^S(M)=\rank_\kk(\Tor_i^S(M,\kk)_j)\,.
\]
The regularity of $M$ over $S$ is defined by 
\[
\reg_S(M)=\max\{j-i \,\vert\, \beta_{i,j}^S(M)\ne 0\}\,.
\]

The following is a standard computation of Betti numbers.

\begin{lemma}
\label{betti-strand}
Let $\kk$ be a field, $S$ a standard graded Noetherian $\kk$-algebra, 
and  $M$ a finitely generated graded $S$-module generated in nonnegative degrees.  
Then for all integers $a,k$ with $k\ge 0$ we have: 
\begin{align*}
\beta_{k,k+a}^S(M)=(-1)^k\fh_M(k+a)
-&\sum_{i=0}^{k-1}(-1)^{i+k}\fh_S(k-i)\beta_{i,i+a}^S(M)\\
-&\sum_{i=0}^{k+a} (-1)^{i+k}\sum_{\substack{j=i\\j\ne i+a}}^{k+a}
\fh_{S}(k+a-j)\beta_{i,j}^S(M)\,.
\end{align*}
\end{lemma}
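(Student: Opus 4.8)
The plan is to compare Hilbert series along a minimal graded free resolution of $M$ over $S$ and then solve a single linear equation for the Betti number $\beta_{k,k+a}^S(M)$. This is the standard ``numerical'' computation; the only care needed is bookkeeping of indices and signs.

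First I would fix a minimal graded free resolution $F_\bullet\to M$ over $S$ by finitely generated graded free modules, so $F_i=\bigoplus_{j\in\BZ}S(-j)^{\beta_{i,j}^S(M)}$. Since $M$ is generated in nonnegative degrees and $S$ is standard graded (so $S_0=\kk$ and the differentials have entries in $S_{\ge 1}$ by minimality), an induction on homological degree shows $\beta_{i,j}^S(M)=0$ whenever $j<i$; in particular $(F_i)_n=0$ for $i>n$, so in each internal degree only finitely many $F_i$ contribute. Hence the alternating sum of Hilbert functions along the exact complex is valid degree by degree, and using $\fh_{S(-j)}(n)=\fh_S(n-j)$ we get, for every $n\in\BZ$,
\[
\fh_M(n)=\sum_{i\ge 0}(-1)^i\fh_{F_i}(n)=\sum_{i\ge 0}\sum_{j\ge i}(-1)^i\,\beta_{i,j}^S(M)\,\fh_S(n-j),
\]
where the inner sum is finite because $\fh_S(n-j)=0$ for $j>n$.

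Next I would set $n=k+a$ and isolate the term indexed by $(i,j)=(k,k+a)$, whose coefficient is $(-1)^k\fh_S(0)=(-1)^k$ since $S$ is standard graded. Solving for $\beta_{k,k+a}^S(M)$ and multiplying through by $(-1)^k$ gives
\[
\beta_{k,k+a}^S(M)=(-1)^k\fh_M(k+a)-\sum_{\substack{i\ge 0,\ j\ge i\\ (i,j)\ne(k,k+a)}}(-1)^{i+k}\,\fh_S(k+a-j)\,\beta_{i,j}^S(M),
\]
again with only $j\le k+a$ contributing. It remains to split this double sum into the part lying on the diagonal strand $j=i+a$ and its complement. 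On the strand we have $\fh_S(k+a-j)=\fh_S(k-i)$, and the conditions $j\le k+a$ and $(i,j)\ne(k,k+a)$ leave exactly $i=0,\dots,k-1$, which produces the first sum in the statement; everything else is precisely the second sum, where the ranges $i=0,\dots,k+a$ and $j=i,\dots,k+a$ are forced by $j\ge i$ and $\fh_S(k+a-j)\ne 0$. When $a<0$ or $k+a<0$ the relevant sums are empty or vanish term by term, so the identity persists.

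I expect the only genuine obstacle to be the final bookkeeping step: verifying that the diagonal and off-diagonal pieces together account for every index pair with a nonzero contribution, with no overlap and no omission, while tracking the sign $(-1)^{i+k}$ and the shift $\fh_S(k+a-j)$. There is no conceptual difficulty beyond this.
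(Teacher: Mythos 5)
Your proposal is correct and follows essentially the same route as the paper's proof: a dimension count in internal degree $k+a$ along a minimal graded free resolution, isolating the $(i,j)=(k,k+a)$ term using $\fh_S(0)=1$, and splitting the remaining double sum into the strand $j=i+a$ and its complement. The extra remarks on finiteness and degenerate cases are fine but not needed beyond what the paper already implicitly uses.
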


\begin{proof}
Consider an augmented  minimal graded free resolution of $M$ over $S$
\[
\cdots\to F_{i+1}\to F_{i}\to \dots \to F_1\to F_0\to M\to 0,
\]
 where
\[
F_i=\bigoplus_{j\geq 0} S(-j)^{\beta_{i,j}^S(M)}\quad \text{for all}\quad  i\geq 0. 
\]
A dimension count in degree $k+a$ gives:
\begin{align*}
\fh_M(k+a)&=\sum_{i\geq 0}(-1)^i\fh_{F_i}(k+a)\\&=
\sum_{i\ge 0} (-1)^i\sum_{j\ge 0} \fh_{S}(k+a-j)\beta_{i,j}^S(M)\\
&=\sum_{i=0}^{k+a} (-1)^i\sum_{j=i}^{k+a} \fh_{S}(k+a-j)\beta_{i,j}^S(M)\\
&=\sum_{i=0}^{k+a} (-1)^i\Big(\fh_{S}(k-i)\beta_{i,i+a}^S(M)+\sum_{\substack{j=i\\j\ne i+a}}^{k+a}\fh_{S}(k+a-j)\beta_{i,j}^S(M)\Big)\\
&=  (-1)^{k}\fh_S(0)\beta_{k,k+a}^S(M)+\sum_{i=0}^{k-1} (-1)^i\fh_{S}(k-i)\beta_{i,i+a}^S(M) \\
&\hspace{4.5cm} + \sum_{i=0}^{k+a} (-1)^i\sum_{\substack{j=i\\j\ne i+a}}^{k+a} \fh_{S}(k+a-j)\beta_{i,j}^S(M).
\end{align*} 
Since $\fh_S(0)=1$, this gives the desired equation in the statement.
\end{proof}

Next we record the definition of the Catalan numbers which appear often throughout our calculations. This sequence of numbers has many combinatorial interpretations, many of which can be found in \cite{stanley-catalan} and at OEIS \cite{catalan-oeis}.

\begin{definition}[{\bf Catalan Numbers}]\label{Catalan}
The $k$th Catalan number is defined by  
\[
C_k = \frac{1}{k+1}\binom{2k}{k}, \quad \text{ for } k\geq 0.\]
\end{definition}

\begin{remark}
\label{Catalan-rmk}
We make use of the following formulas: 
\begin{align*}
C_k&={\binom{2k}{k} - \binom{2k}{k+1}}
={\binom{2k-1}{k-1} - \binom{2k-1}{k+1}}={\binom{2k-2}{k-1}-\binom{2k-2}{k+1}}
\end{align*}
where the first equality is a well-known presentation of $C_k$ and the second and third follow from Pascal's identity. 
\end{remark}

Finally, we recall the definitions of various Lefschetz properties we will use throughout the paper and observe that our main example of interest satisfies these properties.
  
\begin{definition}[{\bf Lefschetz Properties}] 
Let $f$ be a homogeneous element of $S$ of degree $d\geq 1$. We say that $f$ is a {\it maximal rank element} of $S$ if for each $i\ge 0$ the map  $S_{i}\xrightarrow{}S_{i+d}$  given by multiplication by $f$ has maximal  rank.  If $d=1$ (that is, if $f$ is a linear form) and $f$ is a maximal rank element, then we say that $f$ is a {\it Weak Lefschetz element}.  If there exists a linear element $f$ in $S$  such that for each $j\geq 1$  the power  $f^j$ is a maximal rank element of $S$, then we say that $f$ is a {\it Strong Lefschetz element} of $S$. 

We say that the ring $S$ has the {\it Weak (respectively Strong) Lefschetz Property, or WLP (respectively SLP)}, if there exists a Weak (respectively Strong) Lefschetz element of $S$. Notice that if $S$ has SLP with Strong Lefschetz element $f$, then it has WLP with Weak Lefschetz element $f$, and $f^j$ is a maximal rank element for each $j\ge 1$.
\end{definition}

\begin{remark}
\label{ex: wlp}
Let $Q=\kk[x_1, \dots, x_n]$ be a polynomial ring in $n$ variables, with $n\ge 2$. When the characteristic of $\kk$ is zero, any Artinian monomial complete intersection quotient of $Q$  has the SLP (see \cite{Stanley80, Watanabe85, RRR}) and when the characteristic is greater than $n$, any Artinian monomial complete intersection quotient of $Q$ with generators in the same degree has the SLP (see \cite{cook}). Thus by semicontinuity, these properties extend to \textit{general} complete intersections with generators in fixed degrees, with characteristic assumptions as above (see for example \cite{MMR03}).

In particular,  $x_1+\dots+x_n$ is a Strong Lefschetz element of $Q/(x_1^2, \dots, x_n^2)$, and hence $(x_1+\dots+x_n)^2$ is a maximal rank element of $Q/(x_1^2, \dots, x_n^2)$, when $\ch\kk=0$ by \cite[Proposition 2.2]{MMN11} or when $\ch \kk> n$ by \cite[Theorem 3.6(ii)]{cook}. 
\end{remark}

\section{Consequences of Lefschetz Properties }
\label{sec:consequences}
While the main focus of this paper is on the almost complete intersection ring $R$ and the Gorenstein ring $A$  defined in the introduction, in this section we work with a more general setting towards  establishing ingredients needed for our main results.  In view of \cref{ex: wlp}, the rings $R$ and $A$ in the introduction satisfy the settings and the assumptions of our results below.  Furthermore, under appropriate conditions on the characteristic of $\kk$, the results of this section apply, more generally, to rings defined by $n+1$ general quadrics, which are studied at length in \cite{aci}.  

Throughout this section we assume the following setting: 

\begin{setting}
\label{not:1}
Let $\kk$ be a field, $Q=\kk[x_1, \dots, x_n]$ with $n\geq 2$, and $\{f_1,\dots, f_{n+1}\}$ a set of homogeneous elements in $Q$ of degree $2$. Define the following ideals of $Q$:
\begin{equation*}
J \coloneq (f_1, \dots, f_n),\quad I\coloneq (f_1, \dots, f_{n+1}),\quad \text{and}\quad G \coloneq  J:I. 
\end{equation*}
We further assume that $f_1,\ldots, f_n$ form a regular sequence and $f_{n+1}\notin J$, so that $J$ is a complete intersection ideal, $I$ is an almost complete intersection ideal, and $G$ is a Gorenstein ideal by \cite[Remark 2.7]{HU87}. We also define the following quotient rings of $Q$:
\begin{equation*}
P \coloneq Q/J,\quad R\coloneq  Q/I,\quad\text{and}\quad A\coloneq Q/G. 
\end{equation*}
Notice that $\ann_P(f_{n+1})=G/J$ and $A\cong P/\ann_P(f_{n+1})$.

We also set 
\[
\ell\coloneq \left\lfloor\frac{n-2}{2}\right\rfloor.
\]
Observe that $n=2\ell+2$ when $n$ is even and $n=2\ell+3$ when $n$ is odd. 

In this setting, $P$ is a complete intersection ring, $R$ is an almost complete intersection ring, and   $A$ is a Gorenstein ring. 
\end{setting}

\begin{remark}
\label{n2}
When $n=2$ in \cref{not:1},  we have $\dim_\kk[(x_1,x_2)^2]_2=3=\dim_\kk [I]_2\,$,
thus 
\[I=(x_1,x_2)^2\quad\text{and}\quad R=\kk[x_1,x_2]/(x_1,x_2)^2.\] 
Furthermore, since $f_1, f_2$ is a regular sequence of quadrics, we have $P_3=0$, and hence $(x_1,x_2)^3\subseteq (f_1,f_2)=J$. Since $G=J:I$ and $I=(x_1,x_2)^2$, this implies $(x_1,x_2)\subseteq G$. Thus $G=(x_1,x_2)$ and $A=\kk$.  
\end{remark}

\begin{remark}\label{APR}
With notation and hypotheses in \cref{not:1}, there exists a short exact sequence of graded $P$-modules:
\begin{equation}
\label{ses APR}
0\to A(-2)\to P\to R\to 0; 
\end{equation}
see for example \cite{Eis}, where the map $P\to R$ is the canonical projection and the map $A(-2)\to P$ is multiplication by $f_{n+1}$.  As a direct consequence of this exact sequence we have the following equality: 
\begin{align}
\label{hf APR}
\fh_A(i) & = \fh_P(i+2)-\fh_R(i+2)\quad \text{for all }\,  i\geq 0.
\end{align}
\end{remark}

\begin{lemma} 
\label{bettiLR}
Adopt the notation and hypotheses in \cref{not:1}. 
Then we have:
\begin{align*}
\beta_{i,j}^Q(G/J)= \beta_{n-i,2n-j}^Q(R)\quad \text{for all}\quad i,j\geq 0.
\end{align*}
\end{lemma}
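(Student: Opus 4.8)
The plan is to use the theory of linkage: since $J$ is a complete intersection contained in both $I$ and $G$, and $G = J:I$, the ideals $I$ and $G$ are linked via $J$. I would start from the fact that for linked ideals, the free resolutions over $Q$ are related by dualizing with respect to the Koszul resolution of $J$. Concretely, set $P = Q/J$. Since $J$ is a complete intersection of $n$ quadrics, $P$ is a zero-dimensional Gorenstein ring with socle in degree $2n - n = n$; more precisely the canonical module of $P$ is $\omega_P \cong P(-(2n-n)) = P(n)$... let me instead track degrees carefully via the Koszul complex, which has top term $Q(-2n)$ in homological degree $n$.

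First I would record that, because $G/J = \ann_P(f_{n+1})$ and the exact sequence \eqref{ses APR} identifies $R = Q/I$ with $P/(f_{n+1})$, linkage gives an isomorphism $G/J \cong \Hom_P(R, P)$ up to a degree shift; dually $R \cong \Hom_P(G/J, P)$ up to a shift. The shift is dictated by the fact that $\omega_P = P(n - 2n) = P(-n)$... I need to be careful: $J$ is generated by $n$ forms of degree $2$, so $\omega_P \cong P(\sum \deg f_i - n) = P(2n - n) = P(n)$ as graded $P$-modules, meaning $\Hom_\kk(P, \kk) \cong P(n)$, i.e. the socle of $P$ sits in degree $n$. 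Then $\Hom_P(R, P) \cong \Hom_\kk(R, \kk)(-n)$ is the Matlis dual of $R$ over $P$, shifted; and under linkage this is $(G/J)$ up to the twist $(-2)$ coming from $f_{n+1}$ having degree $2$. The second step is to convert the $P$-module statement into a $Q$-module Betti number statement: for a $Q$-module $M$ of finite length annihilated by $J$, one has $\beta_{i,j}^Q(M) = \beta_{n-i, 2n-j}^Q(\Hom_\kk(M,\kk))$ — this is just graded local duality / the self-duality of the Koszul resolution of $J$ applied to the minimal free resolution of $M$ over $Q$, using that $J$ has $n$ generators of degree $2$ so the Koszul dual twists $Q(-a)$ to $Q(-(2n-a))$ in complementary homological degree. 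Combining the linkage isomorphism $G/J \cong \Hom_\kk(R,\kk)$ (with the degree bookkeeping working out so that the shift $(-2)$ and the dual shift cancel appropriately) with this duality formula yields $\beta_{i,j}^Q(G/J) = \beta_{n-i, 2n-j}^Q(R)$.

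Alternatively, and perhaps more cleanly for a self-contained write-up, I would invoke the standard result (e.g. from Peskine–Szpiro, or as in \cite{Eis}) that if $I$ and $G$ are linked via the complete intersection $J$ generated by forms of degrees $d_1, \dots, d_n$, and $F_\bullet$ is the minimal free resolution of $Q/I$ over $Q$, then the dual complex $\Hom_Q(F_\bullet, Q(-\sum d_i))$, after splitting off the Koszul summand coming from $J$, is a (minimal) free resolution of $G/J$. Here all $d_i = 2$ so $\sum d_i = 2n$; dualizing sends homological degree $i$ to $n - i$ and internal degree $j$ to $2n - j$, giving exactly $\beta_{i,j}^Q(G/J) = \beta_{n-i, 2n-j}^Q(Q/I) = \beta_{n-i,2n-j}^Q(R)$ as claimed. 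One must check minimality of the resolution obtained for $G/J$ and that no Koszul summand interferes in the relevant degrees — this follows because $Q/I$ has projective dimension $n$ (it is almost complete intersection, Cohen–Macaulay of codimension $n$) and the Koszul resolution of $J$ has length $n$, so the dual of the mapping cone splits cleanly.

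The main obstacle I anticipate is the degree bookkeeping: getting the internal-degree shift exactly right (the interplay of the twist $\omega_P = P(n)$, the factor $(-2)$ from multiplication by $f_{n+1}$, and the $2n$ in the Koszul dual), and verifying that the free resolution of $G/J$ produced by dualizing the resolution of $R$ is minimal — i.e., that dualizing does not introduce split exact summands in the range $0 \le i \le n$. Both are routine given the complete intersection structure, but they are where the care is needed; I would handle minimality by a length/codimension count and handle the shift by pinning down one extremal Betti number (e.g. $\beta_{0,j}^Q(G/J)$ against $\beta_{n,2n-j}^Q(R)$, the top of the resolution of $R$, which is governed by the socle degree of $R$).
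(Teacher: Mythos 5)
Your proposal follows essentially the same route as the paper: the paper identifies $G/J \cong \omega_R(-n)$ via the linkage exact sequence $0 \to \omega_R(-n) \to P \to A \to 0$ (citing Nagel), deduces $(G/J)^\vee \cong R(n)$, and applies the standard Betti-number duality $\beta_{i,j}^Q(M)=\beta_{n-i,n-j}^Q(M^\vee)$ for Artinian modules, which is exactly your linkage-plus-Matlis-duality plan (and your second, dual-complex variant is the same argument unrolled). The one slip is your stated general duality formula $\beta_{i,j}^Q(M)=\beta_{n-i,2n-j}^Q(\Hom_\kk(M,\kk))$: the twist for Matlis duality over $Q$ is $n-j$ (coming from $\omega_Q=Q(-n)$), not $2n-j$; the $2n$ only appears after combining with the extra shift in the linkage isomorphism, but since you explicitly flagged the degree bookkeeping as the point to pin down, the argument goes through.
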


\begin{proof}
First we claim that we have the isomorphism 
\begin{align*}
    G/J\cong \omega_R(-n),
\end{align*}
where $\omega_R$ is the canonical module of $R$. 
 Indeed, by \cite[Lemma 2]{Nagel91}, since $I$ is linked to $G=J:I$ by the complete intersection $J$ and since the regularity index of $P$ is $n+1$, we have the short exact sequence 
$$  0 \to \omega_{R}(-n) \to P \to A\to 0.$$
This implies that  $\omega_{R}(-n) \cong (J:I) / J \cong G/J$, as claimed.

Thus we have $(G/J)^\vee\cong R(n)$, where $(G/J)^\vee$ denotes the Matlis dual of $G/J$ over $Q$.  
In view of \cite[Proposition 2.2 (v)]{Boij99}, this implies
\begin{align*}
\beta_{i,j}^Q(G/J)&=\beta_{n-i,n-j}^Q((G/J)^\vee)=\beta_{n-i,n-j}^Q(R(n))=\beta_{n-i,2n-j}^Q(R).\qedhere
\end{align*}
\end{proof}

In the next result, parts (1) and (2) are known computations of the Hilbert functions of $A$ and $R$, as can be found in \cite{MM}. We record these results and their proofs for completeness, since we will use them in subsequent results. We supplement these results with computations of the socle degrees of $R$ and $A$ and the multiplicity of $R$.  Further we show that the dimensions of the socle of $R$ in the highest degree and the ideal $G/J$ in the lowest degree are the same and we compute their common value. 

\begin{proposition} 
\label{Hilb R and A}
Adopt the notation and hypotheses in \cref{not:1}. If $f_{n+1}$ is a quadratic maximal rank element of $P$, then the following assertions hold.
\begin{enumerate}[\quad$(1)$]
\item The Hilbert function of $R$ is given by  
\begin{equation}\label{hilbR}
\fh_{R}(i)= \max {\left\{ \binom{n}{i}-\binom{n}{i-2},0\right\}} \quad\text{for all}\quad i\geq 0.
\end{equation}
In particular, we have:  
\begin{enumerate}[\quad$(a)$]
\item The socle of  $R$  has  maximum degree  $n-\ell-1$.
\item The multiplicity of $R$ is
\[
e(R)={\binom{n+1}{\ell+2}}.
\]
\end{enumerate}
\item The Hilbert function of $A$ is given by
\begin{equation}\label{hilbertA}
\fh_{A}(i)=\min\left\{ \binom{n}{i},\binom{n}{i+2}\right\} \quad\text{for all}\quad i\geq 0.
\end{equation}
In particular, we have:
\begin{enumerate}[\quad$(a)$]
    \item The socle degree of  $A$ is $n-2$.
    \item The $P$-ideal $G/J$ is generated in degrees at least $\ell+1$.
\end{enumerate}
\item The following equalities hold: 
\[
\dim_\kk(\Soc R)_{n-\ell-1}= {\binom{n}{\ell+1}-\binom{n}{\ell+3}} = C_{\ell+2}=\dim_\kk(G/J)_{\ell+1}.
\]
where $C_{\ell+2}$ is the $(\ell+2)$-th Catalan number. 
\end{enumerate}
\end{proposition}

\begin{proof}
(1): Since $P$ is a complete intersection defined by $n$ quadrics, its Hilbert series is given by
\begin{equation}\label{hilbP}
    \Hilb_P(u)=\frac{(1-u^2)^n}{(1-u)^n}=(1+u)^n.
\end{equation}
Thus, its Hilbert function is $\fh_P(i)=\binom{n}{i}$ for all $i\geq 0$. 
By our assumption on $f_{n+1}$, the multiplication map  $P_{i-2}\xrightarrow{\cdot f_{n+1}} P_{i}$ has maximal rank for all $i\geq 2$.

Note that $\rank_{\kk}P_{i}=\binom{n}{i}$ for all $i$. If 
$\binom{n}{i-2}\leq \binom{n}{i}$, 
then the multiplication map by $f_{n+1}$ is injective. If $\binom{n}{i-2}> \binom{n}{i}$,
then the multiplication map by $f_{n+1}$ is surjective.  
Since $R_i=P_i/f_{n+1}P_{i-2}$ for all $i$, we have that $R_i$ has rank $\binom{n}{i}-\binom{n}{i-2}$ in the injective case, and $R_i=0$ in the surjective case.

(1$a$): It suffices to check that $\fh_R(n-\ell-1)>0$ and $\fh_R(n-\ell)$=0. This follows directly from (1) when considering the even and odd cases separately. 

(1$b$):  We use (1) and (1$a$) to obtain the equalities: 
\begin{align*}
e(R)&=\sum_{i\ge 0} \fh_R(i)=\sum_{i\ge 0}\max\left\{\binom{n}{i}-\binom{n}{i-2},0\right\}\\
&=\sum_{i=0}^{n-\ell-1} \binom{n}{i}-\binom{n}{i-2}\\
&=\binom{n}{0}+\binom{n}{1}+\left[\binom{n}{2}-\binom{n}{0}\right]+\left[\binom{n}{3}-\binom{n}{1}\right]+\dots+\left[\binom{n}{n-\ell-1}-\binom{n}{n-\ell-3}\right]\\
&=\binom{n}{n-\ell-1}+\binom{n}{n-\ell-2}=\binom{n+1}{n-\ell-1}=\binom{n+1}{\ell+2}.
\end{align*}

\noindent (2): By \eqref{hf APR} in \cref{APR} and part (1), we get:
\begin{align*}
 \fh_A(i) & = \fh_P(i+2)-\fh_R(i+2)\\
 &=\fh_P(i+2)-\max {\left\{ \binom{n}{i+2}-\binom{n}{i},0\right\}}\\
        & =\fh_P(i+2)-\max\{\fh_P(i+2)-\fh_P(i), 0\}\\
        & =\min\{\fh_P(i),\fh_P(i+2)\}\\
        & =\min\left\{ \binom{n}{i},\binom{n}{i+2}\right\}.
\end{align*}

(2$a$): This follows directly from the Hilbert function.

(2$b$): Since $A\cong P/(G/J)$, we need to show that $\fh_A(i)=\fh_P(i)$ for all $i<\ell+1$. Indeed, by (2),  the equality is equivalent to $\binom{n}{i}\le\binom{n}{i+2}$, which holds when $i<\ell+1$.

\noindent (3): By (2$b$) we have that $\beta_{0,\ell+1}^Q(G/J)$ is equal to the dimension of $(G/J)_{\ell+1}$. Using the fact that $R_{\ell+3}=P_{\ell+3}/f_{n+1}P_{\ell+1}$ is zero, we have the exact sequence
\[
0\to (G/J)_{\ell+1}\to P_{\ell+1}\xrightarrow{\cdot f_{n+1}} P_{\ell+3}\to 0.
\]
Now we can compute the dimension of $(G/J)_{\ell+1}$ as follows
\[
\dim_\kk (G/J)_{\ell+1}=\dim_\kk(P_{\ell+1})-\dim_\kk(P_{\ell+3})={\binom{n}{\ell+1}-\binom{n}{\ell+3}}.
\]
We then compute the dimension of $(\Soc(R))_{n-\ell-1}$ using \cref{bettiLR}: 
\[
\beta_{n,2n-\ell-1}^Q(R)=\beta_{0,\ell+1}^Q(G/J)={\binom{n}{\ell+1}-\binom{n}{\ell+3}}. 
\]
 By \cref{Catalan-rmk}, this expression is also equal to the Catalan number $C_{\ell+2}$.
\end{proof}

\begin{remark}
\label{maxrank-hilb}
Assume \cref{not:1}. Then the following are equivalent: 
\begin{enumerate}
    \item $f_{n+1}$ is a maximal rank element of $P$;
    \item The Hilbert function of $R$ is given by \eqref{hilbR};
    \item The Hilbert function of $A$ is given by \eqref{hilbertA}.
\end{enumerate}
Indeed, the equivalence (2)$\iff$(3) is a consequence of \cref{APR}, the implication (1)$\Rightarrow$(2) comes from \cref{Hilb R and A}(1), while the implication (2)$\Rightarrow$(1) can be easily proved by reversing the argument given in the proof of \cref{Hilb R and A}(1). 
\end{remark}

\begin{remark}
In view of the computation of the Hilbert series of $A$ in \cref{Hilb R and A}(2), the ring $A$ is $J$-compressed, in the terminology of \cite[Definition 2.3]{MRRG}.   Note that \cite[Theorem D]{MRRG} gives results about the socle of the ring $R$ when $\ch(\kk)=2$ and $J=(x_1^2, \dots, x_n^2)$. 
\end{remark}

In the next proposition we provide information on the Betti numbers of $A$ and $R$ over $Q$, and moreover, over the intermediate complete intersections defined therein.  

\begin{proposition}
\label{l: A and R over Pu}   Adopt the notation and hypotheses in \cref{not:1} and set 
\begin{equation*}
 \Pu{u} \coloneq Q/(f_1,\dots,f_{u}) \quad\text{for all}\quad 0\leq u\leq n 
\end{equation*}
with the convention that $\Pu{u}=Q$ when $u=0$. If $f_{n+1}$ is a quadratic maximal rank element of $P$, then for each $0\leq u\leq n$  the following equalities hold:

\begin{enumerate}[\quad $(1)$]
\item $\beta_{i,j}^{\Pu{u}}(A)= 
\begin{cases} 
\binom{n-u}{i},&\text{ if } j=2i\,  \text{ and }\,  0\leq j-i <\ell\\       
\quad 0,&\text{ if $j\ne 2i$ and $0\leq j-i<\ell$}\,;
\end{cases}$
\item 
$\beta_{i,j}^{\Pu{u}}(R)=
\begin{cases} 
\binom{n-u+1}{i},&\text{ if } j=2i\,  \text{ and }\,  0\leq j-i \le \ell\\       
\quad 0,&\text{ if $j\ne 2i$ and  $0\leq j-i\le \ell$}\,;
\end{cases}$ 
\end{enumerate}
Furthermore, the following equalities hold:
\begin{enumerate}[\quad $(3)$]
\item[$(3)$] $\beta_{i,j}^Q(R)=0$ when $j-i>n-\ell-1$ and $\beta_{i,j}^Q(A)=0$ when $j-i>n-2$;
\item[$(4)$] For all integers $i,j$ with $ j\notin \{2i,2i-2\}$ there are equalities  
\label{betti RA}
\begin{equation*}
\beta_{i,j}^Q(R)=\beta_{n-i+2,2n-j+2}^Q(R)=\beta_{i-1,j-2}^Q(A)=\beta_{n-i+1, 2n-j}^Q(A);
\end{equation*}
\item[$(5)$]  If $n$ is even, then 
\[
\beta_{\ell+1, 2\ell+2}^Q(R)=\beta_{\ell+3,2\ell+4}^Q(R)+\binom{n+1}{\ell+1}\,;
\] 
\item[$(6)$]  
$\beta_{n, 2n-\ell-1}^Q(R)=C_{\ell+2}$,
\[
\qquad
\beta_{2,\ell+3}^Q(R) =
\begin{cases}
C_{\ell+2},&\text{if } n\notin \{4,5\}\\
{\binom{n}{2}}+n+5, &\text{if } n\in \{4,5\}   
\end{cases}
\quad\text{and}\quad
\beta_{1, \ell+1}^Q(A) = 
\begin{cases}
C_{\ell+2},&\text{if } n\notin\{4,5\}\\
n+5,&\text{if } n\in \{4,5\}\,.
\end{cases}
\] 
\end{enumerate}
\end{proposition}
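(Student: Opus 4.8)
The plan is to build the computation of the Betti numbers of $A$ and $R$ over the rings $\Pu{u}$ out of the maximal rank hypothesis on $f_{n+1}$, by inducting on $u$ from $u=n$ down to $u=0$. At the top of the induction, $\Pu{n}=P$, and there $A=P/(G/J)$ and $R=P/(f_{n+1})$ have minimal free resolutions that one can read off directly: by \cref{Hilb R and A}(2b) the ideal $G/J$ is generated in degrees at least $\ell+1$, so $\beta_{i,j}^P(A)=0$ for $0\le j-i<\ell$ except for $i=j=0$, which establishes (1) for $u=n$; and since $f_{n+1}\notin J$ the element $f_{n+1}$ is a nonzerodivisor on $P$ only in low degrees (more precisely, $\ann_P(f_{n+1})=G/J$ vanishes below degree $\ell+1$), so the Koszul complex on $f_{n+1}$ resolves $R$ through the relevant range, giving $\beta_{i,j}^P(R)=0$ for $0\le j-i\le \ell$ unless $(i,j)\in\{(0,0),(1,2)\}$, which is (2) for $u=n$. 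Then I would descend: passing from $\Pu{u}$ to $\Pu{u-1}$ means quotienting by one more quadric $f_u$ which is a nonzerodivisor on $\Pu{u-1}$, so there is a change of rings spectral sequence (equivalently a long exact sequence in Tor, or the standard two-term resolution of $\Pu{u}$ over $\Pu{u-1}$), and in the stated degree range the relevant Tor modules are controlled, yielding the binomial coefficients $\binom{n-u}{i}$ (resp. $\binom{n-u+1}{i}$) by Pascal's identity. The key point making the induction go through in the bounded strand $0\le j-i<\ell$ (resp. $\le\ell$) is that the contributions one would need to kill lie outside that strand.

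For part (3), I would use regularity: $\reg_Q(R)=n-\ell-1$ and $\reg_Q(A)=n-2$. These follow from the socle degree computations in \cref{Hilb R and A}: $R$ is Artinian with top nonvanishing degree $n-\ell-1$ and, being a quotient of $Q$ by forms, $\reg_Q(R)$ equals the largest degree in which $\Soc(R)$, hence $\Tor_n^Q(R,\kk)$, is nonzero shifted appropriately; similarly $\reg_Q(A)=n-2$ from the socle degree of the Gorenstein ring $A$. Alternatively one gets these bounds from the two-term descent above and the known regularity of $P$. This immediately gives the vanishing statements in (3).

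For part (4), the four-term chain of equalities is a combination of two dualities already available. The equality $\beta_{i,j}^Q(R)=\beta_{n-i+2,2n-j+2}^Q(R)$ I expect to come from the symmetry of the resolution of $R$ forced by the fact that $\Tor_\bullet^Q(R,\kk)$ is, in the relevant range, the dual of itself up to shift — concretely from \cref{bettiLR} together with the short exact sequence \eqref{ses APR}: \cref{bettiLR} reads $\beta_{i,j}^Q(G/J)=\beta_{n-i,2n-j}^Q(R)$, and the long exact sequence in $\Tor^Q(-,\kk)$ applied to $0\to A(-2)\to P\to R\to 0$ relates $\beta_{i,j}^Q(R)$, $\beta_{i-1,j-2}^Q(A)$ and the (known, diagonal) Betti numbers of the complete intersection $P$; for $j\notin\{2i,2i-2\}$ the $P$-terms drop out and one gets $\beta_{i,j}^Q(R)=\beta_{i-1,j-2}^Q(A)$, and then $\beta_{n-i+1,2n-j}^Q(A)=\beta_{n-(i-1),2n-(j-2)}^Q(A)$ follows by reindexing $\beta_{i,j}^Q(G/J)=\beta_{n-i,2n-j}^Q(R)$ shifted by $2$, i.e. by combining \cref{bettiLR} with $\beta^Q_{i,j}(R)=\beta^Q_{i-1,j-2}(A)$ once more. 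So (4) is essentially bookkeeping with \cref{bettiLR} and the mapping cone/long exact sequence of \eqref{ses APR}, carefully tracking which degrees the complete-intersection contribution occupies.

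For (5) and (6), these are the ``diagonal'' Betti numbers that the range restrictions in (1) and (2) deliberately avoided, and they must be extracted by a Hilbert-series/Euler-characteristic count rather than by pure vanishing. I would feed the now-known Betti numbers (the binomial entries from (1)–(2), the vanishing from (3), the symmetries from (4)) into \cref{betti-strand} applied to $R$ and to $A$ over $Q$, together with $\fh_Q(i)=\binom{n+i-1}{n-1}$ and the Hilbert functions of $R$ and $A$ from \cref{Hilb R and A}, and solve for the remaining unknown entry in each strand. For (5), in the even case $n=2\ell+2$ the strand $j-i=\ell+1$ has its first nonzero term at $(i,j)=(\ell+1,2\ell+2)$ and by (4) its entries on the two sides of the ``middle'' are matched except for the anomalous middle term; comparing with the alternating sum forced by $\fh_R$ produces the correction term $\binom{n+1}{\ell+1}$. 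For (6), $\beta_{n,2n-\ell-1}^Q(R)=C_{\ell+2}$ is immediate from \cref{bettiLR} and \cref{Hilb R and A}(3); the values of $\beta_{2,\ell+3}^Q(R)$ and $\beta_{1,\ell+1}^Q(A)$ come from running \cref{betti-strand} in the lowest nonvanishing homological degrees of the appropriate strand ($k=2$ for $R$, $k=1$ for $A$), where for generic $n$ the strand degree $\ell+3-2=\ell+1$ coincides with the one governed by (3)–(4) and one gets $C_{\ell+2}$, but for the small cases $n\in\{4,5\}$ the generation degree $\ell+1$ and the regularity bound $n-\ell-1$ are close enough that an extra contribution survives, giving the exceptional formulas. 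I expect the small-$n$ exceptional cases in (6) to be the main obstacle: one has to check $n=4$ and $n=5$ by hand (or by a direct Macaulay2 computation, or by carefully tracking the two-term descent where the strand $0\le j-i\le\ell$ used in (2) is empty or nearly empty), since the uniform argument breaks exactly when $\ell$ is too small for the ``stable'' range to separate the generators from the socle.
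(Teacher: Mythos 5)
Your treatment of parts (1)--(4) is essentially sound. For (1) and (2) you induct on $u$ from $u=n$ down to $u=0$ via the change-of-rings long exact sequence for the regular quadric $f_u$, whereas the paper fixes $u$ and runs the long exact sequences of $\Tor^{\Pu{u}}(-,\kk)$ on the two module sequences $0\to G/J\to P\to A\to 0$ and $0\to A(-2)\to P\to R\to 0$. Both routes rest on the same input (that $G/J$ is generated in degrees at least $\ell+1$, \cref{Hilb R and A}(2$b$)), and your descent does close up: in the strand $0\le j-i\le\ell$ the two flanking Tor terms of the change-of-rings sequence sit at positions already known to vanish, so the remaining short exact sequence yields Pascal's identity. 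Parts (3) and (4) match the paper's argument.

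The genuine gaps are in (5) and (6). For (5), you propose to recover the relation from \cref{betti-strand} together with the symmetry in (4); but (4) says nothing at the two positions $(\ell+1,2\ell+2)$ and $(\ell+3,2\ell+4)$ --- these are exactly the excluded cases $j=2i$ and $j=2i-2$ --- and extracting $\beta^Q_{\ell+1,2\ell+2}(R)-\beta^Q_{\ell+3,2\ell+4}(R)=\binom{n+1}{\ell+1}$ from the Euler-characteristic recursion would amount to proving a nontrivial binomial identity you do not supply. (Your claim that the strand $j-i=\ell+1$ first becomes nonzero at $i=\ell+1$ is also false; it is already nonzero at $i=2$ for $n\ge 6$.) The missing idea is to write the long exact sequence of \eqref{ses APR} over $Q$ at precisely those two positions, where the complete-intersection terms do \emph{not} drop out, giving $\beta^Q_{\ell+1,2\ell+2}(R)=\beta^Q_{\ell,2\ell}(A)+\binom{n}{\ell+1}$ and $\beta^Q_{\ell+3,2\ell+4}(R)=\beta^Q_{\ell+2,2\ell+2}(A)-\binom{n}{\ell+2}$, and then to invoke the Gorenstein symmetry $\beta^Q_{\ell,2\ell}(A)=\beta^Q_{\ell+2,2\ell+2}(A)$, valid exactly because $n=2\ell+2$. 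For (6), the first identity is fine, but the other two should not be chased through \cref{betti-strand}: the point is that $\beta^Q_{1,\ell+1}(A)$ counts minimal generators of $G$ in degree $\ell+1$, which is $\dim_\kk(G/J)_{\ell+1}=C_{\ell+2}$ when $\ell+1\ne 2$, and is $n+5$ when $\ell+1=2$ (the $n$ quadrics of $J$ plus the five generators of $G/J$); then $\beta^Q_{2,\ell+3}(R)$ follows from (4) when $n\notin\{4,5\}$ and from the $(\ell+1,2\ell+2)$ instance of the long exact sequence when $n\in\{4,5\}$. Deferring $n\in\{4,5\}$ to a hand or Macaulay2 check leaves that case unproved as written.
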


\begin{proof} (1): Set $L:=\ann_P(f_{n+1})=G/J$. Then, there exists an exact sequence: 
\begin{equation*}
    0\to L\to P\to A\to 0.
\end{equation*}
By \cref{Hilb R and A}(2$b$),  $L$ is generated in degrees at least $\ell+1$, and hence $\Tor_i^{\Pu{u}}(L,\kk)_j=0$ when $0\le j-i\le \ell$. From the long exact sequence of Tor modules:
\begin{align*}\label{Tor_les}
\cdots\to\Tor_i^{\Pu{u}}(L,\kk)_j\to \Tor_i^{\Pu{u}}(P,\kk)_j\to \Tor_i^{\Pu{u}}(A,\kk)_j\to \Tor_{i-1}^{\Pu{u}}(L,\kk)_j\to \cdots
\end{align*}
we obtain $\beta^{\Pu{u}}_{i,j}(A)=\beta_{i,j}^{\Pu{u}}(P)$ for all $0\le j-i<\ell.$ 
Since $P$ is a complete intersection over $\Pu{u}$ that is  generated by  $n-u$ quadrics we have  
\begin{equation}
\label{e:P-Pu}
\beta_{i,j}^{\Pu{u}}(P)
=\begin{cases} 
\binom{n-u}{i},&\ \text{if}\   j = 2i\\
0,&\ \text{otherwise}\,.  
\end{cases}
\end{equation}
The desired conclusion now follows.

(2): By \eqref{ses APR} in \cref{APR} there is a short exact sequence of graded 
$Q$-modules and in particular of $\Pu{u}$-modules:
$0\to A(-2)\to P\to R\to 0$, which induces a long exact sequence  of Tor modules:
\begin{equation}
\label{les}
    \xymatrix@C=1.8pc@R=1.2pc{ 
        & \Tor_{i}^{\Pu{u}}(A,\kk)_{j-2} \ar[rr]&&
          \Tor_{i}^{\Pu{u}}(P,\kk)_{j}   \ar[rr]&&
          \Tor_{i}^{\Pu{u}}(R,\kk)_{j}   \ar`r[r] `[l] `[llllld]`[dl] [dllll]&\\
        & \Tor_{i-1}^{\Pu{u}}(A,\kk)_{j-2} \ar[rr]&&
          \Tor_{i-1}^{\Pu{u}}(P,\kk)_{j}   \ar[rr]&&
          \Tor_{i-1}^{\Pu{u}}(R,\kk)_{j}
          }
\end{equation}

If  $j=2i$  and $0\leq i\le \ell$, then using part (1) and \eqref{e:P-Pu}, we get  $\Tor_i^{\Pu{u}}(A,\kk)_{2i-2}=0=\Tor_{i-1}^{\Pu{u}}(P,\kk)_{2i}$, so we have an exact sequence
\[
0\to \Tor_{i}^{\Pu{u}}(P,\kk)_{2i}\to 
\Tor_{i}^{\Pu{u}}(R,\kk)_{2i}\to \Tor_{i-1}^{\Pu{u}}(A,\kk)_{2i-2}\to 0.
\]
Counting ranks and using \eqref{e:P-Pu} and part (1) we get:
\begin{align*}
\beta_{i,2i}^{\Pu{u}}(R)&=\beta_{i,2i}^{\Pu{u}}(P)+\beta_{i-1,2i-2}^{\Pu{u}}(A)
=\binom{n-u}{i}+\binom{n-u}{i-1}
=\binom{n-u+1}{i}.
\end{align*}

If  $j\not =2i$  and $0\le j-i\le \ell$, then by using \eqref{e:P-Pu} and part (1), we get  $\Tor_{i}^{\Pu{u}}(P,\kk)_{j}=0=\Tor_{i-1}^{\Pu{u}}(A,\kk)_{j-2}$.
Hence, by \eqref{les} we obtain $\beta_{ij}^{\Pu{u}}(R)=0.$
  
(3):  This follows from the fact that since $R$ and $A$ are Artinian, their regularities are given by their socle degrees in \cref{Hilb R and A} parts (1a) and (2a).

(4):  Working over $Q$ and assuming $j\notin \{2i,2i-2\}$, the exact sequence \eqref{les} becomes
\[
0=\Tor_{i}^Q(P,\kk)_{j}\to \Tor_{i}^Q(R,\kk)_{j}\to \Tor_{i-1}^Q(A,\kk)_{j-2}\to \Tor_{i-1}^Q(P,\kk)_{j}=0
\]
where the computation of $\Tor^Q(P,\kk)$ on either end of the sequence follows from \eqref{e:P-Pu}.
This yields the following equality of Betti numbers
\begin{align}\label{e:bettiRA}
    \beta_{i,j}^Q(R)=\beta_{i-1,j-2}^Q(A)
\end{align}
for all $i$ and $j$ with $ j\notin \{2i,2i-2\}$. 
 
Notice that 
$2n-j+2\notin\{2(n-i+2), 2(n-i+2)-2\}$, and thus by \eqref{e:bettiRA} we have 
\[
\beta_{n-i+2,2n-j+2}^Q(R)=\beta_{n-i+1,2n-j}^Q(A).
\]
Finally, since $A$ is Gorenstein, we observe that $\beta_{i-1,j-2}^Q(A)=\beta_{n-i+1,2n-j}^Q(A)$. Combining the three equalities of Betti numbers above finishes the proof.

(5): Writing \eqref{les} with $i=\ell+1$, $j=2i=2\ell+2$, and also with $i=\ell+3$, $j=2\ell+4$, we have exact sequences
\[
0=\Tor_{_{\ell+1}}^Q(A,\kk)_{_{2\ell}}\to \Tor_{_{\ell+1}}^Q(P,\kk)_{_{2\ell+2}}\to \Tor_{_{\ell+1}}^Q(R,\kk)_{_{2\ell+2}}\to \Tor_{_{\ell}}^Q(A,\kk)_{_{2\ell}}\to \Tor_{_{\ell}}^Q(P,\kk)_{_{2\ell+2}}=0
\]
\[
0\!=\!\Tor_{_{\ell+3}}^Q(P,\kk)_{_{2\ell+4}}\!\to\!\Tor_{_{\ell+3}}^Q(R,\kk)_{_{2\ell+4}}\!\to\! \Tor_{_{\ell+2}}^Q(A,\kk)_{_{2\ell+2}}\!\to\! \Tor_{_{\ell+2}}^Q(P,\kk)_{_{2\ell+4}}\!\to\!\Tor_{_{\ell+2}}^Q(R,\kk)_{_{2\ell+4}}\!=\!0
\]
where the terms equal to zero are due to \eqref{e:P-Pu} and the previously proved items (1) and (3). Computing ranks in these two sequences, we have: 
\begin{align}
\label{b1}\beta_{\ell+1,2\ell+2}^Q(R)&=\beta^Q_{\ell, 2\ell}(A)+\beta_{\ell+1, 2\ell+2}^Q(P)\\
\label{b2}\beta_{\ell+3,2\ell+4}^Q(R)&=\beta^Q_{\ell+2, 2\ell+2}(A)-\beta_{\ell+2, 2\ell+4}^Q(P)\,.
\end{align}
When $n$ is even, and hence $n=2\ell+2$, we use the fact that $A$ is Gorenstein to observe
\begin{align}
\label{b3}
\beta_{\ell, 2\ell}^Q(A)=\beta_{n-\ell, 2n-2-2\ell}^Q(A)=\beta_{\ell+2,2\ell+2}^Q(A)\,.
\end{align}
Combining \eqref{b3} with \eqref{b1}, \eqref{b2} and using \eqref{e:P-Pu}, we have
\begin{align*}
\beta_{\ell+1,2\ell+2}^Q(R)&=\beta_{\ell+3,2\ell+4}^Q(R)+\binom{n}{\ell+1}+\binom{n}{\ell+2}=\beta_{\ell+3,2\ell+4}^Q(R)+\binom{n+1}{\ell+2}\,.
\end{align*} 

(6): The equality $\beta_{n,2n-\ell-1}^Q(R)=C_{\ell+2}$ follows directly from \cref{Hilb R and A}(3). To prove the remaining equalities, assume first that $n\notin \{4,5\}$ and thus $\ell\neq 1$. Since $A=P/(G/J)$ and $\ell+1>2$, the Betti number $\beta_{1,\ell+1}^Q(A)$ is equal to the minimal number of generators of $G/J$ in degree $\ell+1$, which is equal to $C_{\ell+2}$ by \cref{Hilb R and A}(3). Notice that $\ell+3\notin\{2,4\}$, and thus by part (4) we have 
\begin{align*}
\beta_{2,\ell+3}^Q(R)=\beta_{1,\ell+1}^Q(A)=C_{\ell+2}.
\end{align*}

Assume now $n\in \{4,5\}$, and thus $\ell=1$.  By \cref{Hilb R and A}(3), the minimal number of generators of $G/J$ in degree $\ell+1=2$ is $C_{\ell+2}=5$ and there are no generators in degree less than $2$. We conclude that the minimal number of generators of $G$ in degree $2$ is $n+5$ and hence $\beta_{1,2}^Q(A)=n+5$. 

Using \eqref{b1} with $\ell=1$, we have
\begin{align*}
\beta_{2,4}^Q(R)&=\beta_{2,4}^Q(P)+\beta_{1,2}^Q(A)=\binom{n}{2}+\beta_{1,2}^Q(A)=\binom{n}{2}+n+5.\qedhere
\end{align*}
\end{proof}

We finish this section by providing a complete description of the Betti numbers of $R$ and $A$ over $Q$ when $n$ is even. For the Betti numbers of $R$ over $Q$ we will need the following recursively defined sequence.

\begin{notation}
\label{not: rho} For $n\geq 1$, define a sequence $\{\rho_k(n)\}_{k\ge 0}$ by setting $\rho_0(n)=0$ and for $k\geq 1$ using  the following recursive relation: 
\begin{align*}
\rho_k(n)=&\sum_{i=0}^{k-1}(-1)^{i+k+1}{\binom{n+k-i-1}{n-1}}\rho_i(n)+\sum_{i=0}^{\ell} (-1)^{i+k+1}  \binom{n+k+\ell-2i}{n-1}\binom{n+1}{i}.
\end{align*}
Here are the sequences $\{\rho_k(n)\}_{0\leq k\leq n}$ for small even values of $n$:
\begin{align*}
    n=2:\quad & \{0,3,2\} \qquad &&  n=6:\quad \{0,0, 14,105,132,70,14\}\\
    n=4:\quad & \{0,0,15,16,5\}\qquad 
    && n= 8:\quad \{0,0, 42,288,945,1216,819,288,42\}.
\end{align*}
\end{notation}
The next result describes the Betti table of $R$ over $Q$ when the number of variables $n$ is even. Note that, in this table, the entries marked ``-'' are assumed to be zero. A different description of the Betti numbers can be found in \cite[Theorem 5.4]{MM}. \cref{betti RQ} also points out some properties of the sequence $\{\rho_k(n)\}_{0\leq k\leq n}$ that are not immediately clear from its definition.

\begin{proposition}[\bf Betti numbers of $R$ over $Q$] 
\label{betti RQ}
 Adopt the notation and hypotheses in \cref{not:1} and \cref{not: rho}, and assume $n$ is even.  
If $f_{n+1}$ is a quadratic maximal rank element of $P$, then
\[
\beta_{i,j}^Q(R)= \begin{cases} 
    \binom{n+1}{i},&\text{ if } j=2i\,  \text{ and }\,  0\leq i \leq\ell\\
       \rho_i(n), &\text{ if } j= i+\ell+1 \text{ and } i\ge 0\\
       0,&\text{ otherwise}\,.
    \end{cases}
\]
Consequently,  the sequence $\{\rho_i(n)\}_{i\ge 0}$ satisfies the following properties: 
\begin{enumerate}[\quad\rm(1)]
\item $\rho_1(n)=0$ when  $n\ge 4$ and $\rho_k(n)=0$ when $k>n$; 
  \item $\rho_k(n)=\rho_{n-k+2}(n)$ for all $0\leq k \leq \ell$; 
  \item $\rho_n(n)=C_{\ell+2}$ for all $n\ge 2$,  and $\rho_2(n)=C_{\ell+2}$ for all $n\ge 6$;
  \item $\rho_{\ell+1}(n)=\rho_{\ell+3}(n)+\binom{n+1}{\ell+1}$.
\end{enumerate}

In particular, when $n\ge 6$ and setting $\rho_k=\rho_k(n)$ for each $0\le k\le n$,  the Betti table of $R$ over $Q$ is:

\begin{center}
\begin{tabular}{r| c c c c c c c c c c c c c c}
& $0$ & $1$ & $2$ & $\ldots$  & $\ell$ & $\ell+1$ & $\ell+2$ & $\ell+3$ & $\ell+4$ & $\ldots$  & $n$ \\
\hline
$0$ & $\binom{n+1}{0}$ & - & -  & $\cdots$ & - & -& - & - & - &$\cdots$ & - \\ 
$1$ & -  & $\binom{n+1}{1}$ & - & $\cdots$ & - & - & - &- & -& $\cdots$ &-\\ 
\vphantom{\huge{A}}\vdots&\vdots&\vdots&\vdots&$\ddots$&\vdots&\vdots&\vdots&\vdots&\vdots&\vdots&\vdots\\
$\ell$ &  - & - & - &$\cdots$&$\binom{n+1}{\ell}$ & -& -& - &- &$\cdots$ &-\\
$\ell+1$& - & - & $\rho_{2}$ &$\ldots$  & $\rho_{_{\ell}}$ & $\rho_{_{\ell+1}}$ & $\rho_{_{\ell+2}}$& $\rho_{_{\ell+3}}$& $\rho_{_{\ell}}$ &$\ldots$ & $\rho_{2}$ \\
\end{tabular}
\end{center}
\end{proposition}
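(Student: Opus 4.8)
The plan is to first determine the \emph{shape} of the minimal free resolution of $R$ over $Q$, then compute the entries in the one undetermined strand by a Hilbert-function bookkeeping argument, and finally extract the stated symmetries. For the shape: since $R$ is generated in degree $0$ we have $\beta_{0,j}^Q(R)=0$ for $j\neq 0$ and $\beta_{i,j}^Q(R)=0$ for $j<i$; taking $u=0$ in \cref{l: A and R over Pu}(2) shows that for $0\le j-i\le\ell$ the only nonzero Betti numbers are $\beta_{i,2i}^Q(R)=\binom{n+1}{i}$ with $0\le i\le\ell$; and \cref{l: A and R over Pu}(3) gives $\beta_{i,j}^Q(R)=0$ for $j-i>n-\ell-1=\ell+1$ (using $n=2\ell+2$). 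Hence every potentially nonzero Betti number not of the form $\beta_{i,2i}^Q(R)$ with $0\le i\le\ell$ lies on the strand $j-i=\ell+1$; putting $\rho_i(n):=\beta_{i,i+\ell+1}^Q(R)$ for $i\ge 0$ already yields the three-case formula, and $\rho_0(n)=\beta_{0,\ell+1}^Q(R)=0$ since $R$ is cyclic and $\ell+1\ge 1$, matching \cref{not: rho}.

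To identify these $\rho_i(n)$ with the recursively defined sequence, I would apply \cref{betti-strand} with $S=Q$, $M=R$, and $a=\ell+1$. For $k\ge 1$ the leading term $(-1)^k\fh_R(k+\ell+1)$ vanishes, since by \cref{Hilb R and A}(1a) the socle degree of $R$ is $n-\ell-1=\ell+1$, so $\fh_R$ is supported in degrees $\le\ell+1$. In the first sum of \cref{betti-strand} one has $\beta_{i,i+a}^Q(R)=\rho_i(n)$ and $\fh_Q(k-i)=\binom{n+k-i-1}{n-1}$; in the second sum the only surviving contributions are the off-strand Betti numbers $\beta_{i,2i}^Q(R)=\binom{n+1}{i}$ with $0\le i\le\ell$, paired with $\fh_Q(k+\ell+1-2i)=\binom{n+k+\ell-2i}{n-1}$ (the identification of $\fh_Q$ with these binomials, and the vanishing for negative arguments, being subsumed by the convention $\binom{m}{n-1}=0$ for $m<n-1$). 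Substituting, the identity of \cref{betti-strand} becomes precisely the recursion of \cref{not: rho}, so by induction on $k$ we conclude $\beta_{i,i+\ell+1}^Q(R)=\rho_i(n)$ for all $i\ge 0$; the small cases $n=2,4$ can be double-checked against the sequences listed in \cref{not: rho} (and compare \cref{n2}).

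For the four properties: (1) $\rho_1(n)=\beta_{1,\ell+2}^Q(R)$ counts the degree-$(\ell+2)$ minimal generators of $I$, which is $0$ for $n\ge 4$ since $I$ is minimally generated by the $n+1$ quadrics $f_1,\dots,f_{n+1}$, and $\rho_k(n)=\beta_{k,k+\ell+1}^Q(R)=0$ for $k>n=\pd_Q R$ because $R$ is Artinian; (2) by \cref{l: A and R over Pu}(4), for $0\le i\le\ell$ the internal degree $j=i+\ell+1$ avoids $\{2i,2i-2\}$, so $\rho_i(n)=\beta_{i,i+\ell+1}^Q(R)=\beta_{n-i+2,\,2n-i-\ell+1}^Q(R)=\rho_{n-i+2}(n)$ since $(2n-i-\ell+1)-(n-i+2)=\ell+1$; (3) $\rho_n(n)=\beta_{n,\,n+\ell+1}^Q(R)=\beta_{n,\,2n-\ell-1}^Q(R)=C_{\ell+2}$ by \cref{Hilb R and A}(3) (equivalently \cref{l: A and R over Pu}(6)), whence $\rho_2(n)=\rho_n(n)=C_{\ell+2}$ for $n\ge 6$ by (2) with $i=2$; and (4) is \cref{l: A and R over Pu}(5) rewritten using $\beta_{\ell+1,2\ell+2}^Q(R)=\rho_{\ell+1}(n)$ and $\beta_{\ell+3,2\ell+4}^Q(R)=\rho_{\ell+3}(n)$, both internal degrees lying on the strand $j-i=\ell+1$. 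The displayed Betti table for $n\ge 6$ then follows by combining the three-case formula with (1) (the columns $i=0,1$ of the last row vanish) and (2) (the palindromic tail $\rho_{\ell+4}=\rho_\ell,\dots,\rho_n=\rho_2$).

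The step I expect to be the main obstacle is the second one: matching the output of \cref{betti-strand} term-by-term with the definition of $\rho_k(n)$ --- lining up all signs, index ranges and binomial coefficients --- and verifying that the boundary cases (small $n$, the coincidence at $i=\ell+1$ of the diagonal strand $j=2i$ with the strand $j-i=\ell+1$, and negative arguments of $\fh_Q$ absorbed by the convention $\binom{m}{n-1}=0$ for $m<n-1$) cause no trouble.
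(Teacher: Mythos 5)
Your proposal is correct and follows essentially the same route as the paper: the shape of the resolution from \cref{l: A and R over Pu}(2)--(3), the strand $j-i=\ell+1$ identified with $\rho_k(n)$ via \cref{betti-strand} with $a=\ell+1$ and the vanishing of $\fh_R(k+\ell+1)$ for $k\ge 1$, and properties (1)--(4) read off from the remaining parts of \cref{l: A and R over Pu}. The bookkeeping you flag as the main obstacle goes through exactly as you describe and matches the paper's computation.
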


\begin{proof} 
The hypothesis that $n$ is even implies $n=2\ell+2$. Since $\Pu{0}=Q$, by \cref{l: A and R over Pu}(2) and (3)  for $u=0$, we obtain:
\[
\beta_{i,j}^{Q}(R)=
\begin{cases} 
\binom{n+1}{i},&\text{ if } j=2i\,  \text{ and }\,  0\leq i \le \ell\\       
\quad 0,&\text{ if $j\ne 2i$ and  $j-i\le \ell$, or if}
\text{ $j-i\geq \ell+2$}\,.
\end{cases}
\]

It remains to determine $\beta_{k,k+\ell+1}^Q(R)$ for all $k\ge 0$. 

By \cref{Hilb R and A}(1)  we have:
$\fh_R(k+\ell+1)=\max\left\{ \binom{n}{k+\ell+1}-\binom{n}{k+\ell-1},0\right\}=0$  for all $k\geq 1$. We also have the equalities:
\begin{equation*}
\fh_Q(k-i)=\binom{n+k-i-1}{n-1}\quad\text{and}\quad
    \fh_Q(k+\ell+1-2i)=\binom{n+k+\ell-2i}{n-1}\quad\text{for all }k,i\geq 0.
\end{equation*}
Now, substituting  $M=R$, $S=Q$, $a=\ell+1$
in \cref{betti-strand}, we obtain for $k\ge 1$:
\begin{align*}
\beta_{k,k+\ell+1}^Q(R)=(-1)^k\fh_R(k+\ell+1)-&\sum_{i=0}^{\ell}(-1)^{i+k}\fh_Q(k-i)\beta_{i,i+\ell+1}^Q(R)\\
-&\sum_{i=0}^{k+\ell+1} (-1)^{i+k}\sum_{\substack{j=i\\j\ne i+\ell+1}}^{k+\ell+1}
\fh_{Q}(k+\ell+1-j)\beta_{i,j}^Q(R).
\end{align*}
Plugging in the values of the Hilbert functions above and comparing with the definition of $\rho$, we see that $\beta_{k,k+\ell+1}^Q(R)=\rho_k$ for all $k\ge 1$. 

To show (1), note that $\rho_1=\beta_{1,\ell+2}^Q(R)=\beta_{0,\ell+2}^Q(I)$ and it is equal to $0$ when $\ell>0$ because the ideal $I$ is quadratic.     
The remaining conclusions about the numbers $\rho_i$ follow from from \cref{l: A and R over Pu} and the fact that since $n$ is even, we have $n-\ell+2=\ell+4$.  
\end{proof}

\begin{ex} \label{rem:n2n4bettiR}
The Betti tables for $R$ over $Q$ for $n=2$ and $n=4$, respectively, in \cref{betti RQ} are as follows:
\begin{center}
\begin{tabular}{r| c c c c }
& $0$ & $1$ & $2$   \\
\hline
$0$ & $1$ & -  & -\\ 
$1$ & - & $3$ & $2$  
\end{tabular}\qquad \qquad 
\begin{tabular}{r| c c c c c }
& $0$ & $1$ & $2$ & $3$  & $4$ \\
\hline
$0$ & $1$ & - & - & -& -\\ 
$1$ & -  & $5$ & - & - & - \\ 
$2$ & - & -  &$15$&$16$ & $5$  \\ 
\end{tabular}
\end{center}
\end{ex}

\begin{notation}
\label{not: gamma}
For $n\geq 1$, let $\{\gamma_k(n)\}_{k\geq 0}$ be a sequence such that 
\[
\gamma_0(n)=\begin{cases}
0, &\text{if $n>2$}\\
1, &\text{if $n=2$}\,, 
\end{cases} 
\quad 
\gamma_k(n) = \gamma_{n-k}(n) \text{ for } 1\le k\le n,
\] 
and for $1\le k \le \ell+1$,  we define $\gamma_k(n)$ recursively as follows
\[
\gamma_k(n) = 
(-1)^k{ \binom{n}{k+\ell+2}} - \sum_{i=0}^{k-1}(-1)^{i+k}{ \binom{n-1+k-i}{n-1}}\gamma_i(n) - \sum_{i=0}^{\ell-1}(-1)^{i+k}{ \binom{n-1+\ell+k-2i}{n-1}\binom{n}{i}}.
\]
Here are the sequences $\{\gamma_k(n)\}_{0\leq k\leq \ell+1}$  for small even values of $n$:
\begin{align*}
    n=2: & \quad\{1,2\} \qquad &&n=6:   \quad\{0,14, 85, 132\}\\ 
n=4: &  \quad\{0,9,16\}\qquad
    &&n=8:  \quad \{0,42,288,875,1216\}.
\end{align*}
\end{notation}

\begin{proposition}[\bf Betti numbers of $A$ over $Q$] 
\label{betti AQ}
Adopt the notation and hypotheses in \cref{not:1} and \cref{not: gamma} and assume $n$ is even. If $f_{n+1}$ is a quadratic maximal rank element of $P$, then
\[
\beta_{i,j}^Q(A)=\begin{cases} 
\binom{n}{i},&\text{ if } j = 2i   \text{ and } 0\leq  i \leq \ell-1 \text{ or  if }\ j = 2i-2   \text{ and } \ell+3\leq i \leq n\\
\gamma_i(n),&\text{ if}\ j=i+\ell \text{ and }\,  0\leq i\leq n\\
0,&\text{ otherwise}\,.
\end{cases}
\]

Consequently, when $n\ge 6$, the sequence $\{\gamma_i(n)\}_i$ satisfies the following properties: 
\begin{enumerate}[\quad\rm(1)]
\item $\gamma_1(n)=\gamma_{n-1}(n)=
C_{\ell+2}$;
\item $\gamma_i(n)=\rho_{i+1}(n)$ for all $0\le i\le \ell-1$. 
\end{enumerate}

Furthermore, with $n\ge 6$ and  setting $\gamma_i=\gamma_i(n)$ for all $0\le i\le n$, the Betti table of $A$ over $Q$ is:
\begin{center}
\begin{tabular}{r| c c  c c c c c c c c c c c }
& $0$ & $1$  & $\ldots$  & $\ell-1$ & $\ell$ & $\ell+1$ &$\ell+2$ &$\ell+3$  & $\ldots$  & $n-1$& $n$ \\
\hline
$0$ & $\binom{n}{0}$ & - & $\cdots$ & - & - & - & - & - &$\cdots$ & - & -  \\ 
$1$ & -  & $\binom{n}{1}$   &$\cdots$ &- &- &- &- &- &$\cdots$ &- &- \\ 
\vphantom{\huge{A}}\vdots&\vdots&\vdots&$\ddots$&\vdots&\vdots&\vdots&\vdots&\vdots&\vdots&\vdots&\vdots\\ 
$\ell-1$ & -   &- & $\cdots$ &$\binom{n}{\ell-1}$ &- &- &-&-&$\cdots$ & - & - \\
$\ell$& - & $\gamma_1$ &$\ldots$  & $\gamma_{_{\ell-1}}$ & $\gamma_{_{\ell}}$ & $\gamma_{_{\ell+1}}$ &$\gamma_{_{\ell}}$ &$\gamma_{_{\ell-1}}$& $\dots$ &$\gamma_{1}$ & - \\
\vphantom{\huge{A}} $\ell+1$ &- & - & $\cdots$ & - & - & - &- &$\binom{n}{\ell+3}$&$\cdots$ & -& - \\ 
\vphantom{\huge{A}} \vdots&\vdots &\vdots&\vdots&\vdots&\vdots&\vdots&\vdots&\vdots&$\ddots$&\vdots&\vdots\\ 
$n-3$ & -   & -  &$\cdots$ & - & - & - & - & - &$\cdots$ &$\binom{n}{n-1}$ & - \\
$n-2$ & -   & -  &$\cdots$ & - & - & - & - & - &$\cdots$ & - &$\binom{n}{n}$ \\
\end{tabular}
\end{center}
\end{proposition}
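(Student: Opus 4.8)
The plan is to show that all Betti numbers of $A$ over $Q$ are forced onto three ``strands'' by \cref{l: A and R over Pu} together with Gorenstein self-duality, and then to pin down the one remaining strand numerically via \cref{betti-strand}.

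Since $n$ is even we have $n=2\ell+2$, so $\Pu{0}=Q$ and \cref{l: A and R over Pu}(1) with $u=0$ gives $\beta_{i,j}^Q(A)=\binom{n}{i}$ when $j=2i$ and $0\le i\le\ell-1$, and $\beta_{i,j}^Q(A)=0$ when $j\ne 2i$ and $0\le j-i\le\ell-1$; part (3) gives $\beta_{i,j}^Q(A)=0$ when $j-i>n-2=2\ell$. Because $A$ is a Gorenstein Artinian quotient of $Q$ of codimension $n$ with socle degree $n-2$ (\cref{Hilb R and A}(2a)), its minimal free resolution over $Q$ is self-dual, i.e.\ $\beta_{i,j}^Q(A)=\beta_{n-i,\,2n-2-j}^Q(A)$ for all $i,j$ (the symmetry already used in the proof of \cref{l: A and R over Pu}(4)--(5)). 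Feeding the three facts above through this duality transports them to the complementary range: $\beta_{i,j}^Q(A)=\binom{n}{i}$ when $j=2i-2$ and $\ell+3\le i\le n$, and $\beta_{i,j}^Q(A)=0$ when $j\ne 2i-2$ and $\ell+1\le j-i\le 2\ell$. Together with $\beta_{i,j}^Q(A)=0$ for $j<i$, this shows the only undetermined Betti numbers of $A$ over $Q$ lie on the single strand $j-i=\ell$.

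To compute $\beta_{k,k+\ell}^Q(A)$ for all $k\ge 0$: for $k=0$, $\beta_{0,\ell}^Q(A)=\dim_\kk(\Tor_0^Q(A,\kk))_\ell$ equals $1$ if $\ell=0$ (i.e.\ $n=2$) and $0$ otherwise, which is $\gamma_0(n)$. For $k\ge 1$ I would apply \cref{betti-strand} with $M=A$, $S=Q$, $a=\ell$; the Hilbert data needed are $\fh_Q(m)=\binom{n-1+m}{n-1}$ and, from \cref{Hilb R and A}(2) and $n=2\ell+2$, $\fh_A(k+\ell)=\min\{\binom{n}{k+\ell},\binom{n}{k+\ell+2}\}=\binom{n}{k+\ell+2}$ for $k\ge 1$ (with the convention that a binomial coefficient with top smaller than bottom is $0$; in particular this vanishes for $k>\ell$). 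Substituting, the ``off-strand'' double sum in \cref{betti-strand} sees only the Betti numbers $\beta_{i,2i}^Q(A)=\binom{n}{i}$ with $0\le i\le\ell-1$ (the $\beta_{i,2i-2}^Q(A)$ with $\ell+3\le i\le n$ sit in internal degree too large to contribute for $k\le\ell+1$), while the remaining sum involves $\beta_{i,i+\ell}^Q(A)$ for $i<k$. Term by term this matches the recursion defining $\gamma_k(n)$ in \cref{not: gamma}, so by induction on $k$, $\beta_{k,k+\ell}^Q(A)=\gamma_k(n)$ for $0\le k\le\ell+1$. For $\ell+2\le k\le n$, self-duality gives $\beta_{k,k+\ell}^Q(A)=\beta_{n-k,\,(n-k)+\ell}^Q(A)=\gamma_{n-k}(n)=\gamma_k(n)$, using the symmetry $\gamma_k(n)=\gamma_{n-k}(n)$ from \cref{not: gamma} (valid since $0\le n-k\le\ell$). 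This yields the displayed formula for $\beta_{i,j}^Q(A)$, hence the Betti table when $n\ge 6$.

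For the numerical consequences (with $n\ge 6$): property (1) holds since $\gamma_1(n)=\beta_{1,\ell+1}^Q(A)=C_{\ell+2}$ by \cref{l: A and R over Pu}(6) (as $n\ge 6$ forces $n\notin\{4,5\}$) and $\gamma_{n-1}(n)=\gamma_1(n)$ by symmetry; property (2), comparing the $\gamma$'s with the $\rho$'s of \cref{betti RQ}, follows from the identification $\Tor_i^Q(R,\kk)_j\cong\Tor_{i-1}^Q(A,\kk)_{j-2}$ of \cref{l: A and R over Pu}(4), valid for $j\notin\{2i,2i-2\}$, applied with $j=i+\ell+1$. The step I expect to be the main obstacle is the term-by-term match between \cref{betti-strand} and the recursion for $\gamma_k(n)$: one must verify precisely which already-computed Betti numbers enter (only the ``$j=2i$'' ones, and only for $i\le\ell-1$), track the convention that Hilbert-function values in negative degrees vanish, and check that the boundary overlaps at $i=\ell$ and $i=\ell+2$ — where the linear part meets the strand $j-i=\ell$, giving $\beta_{\ell,2\ell}^Q(A)=\gamma_\ell(n)$ and $\beta_{\ell+2,2\ell+2}^Q(A)=\gamma_\ell(n)$ — are consistent. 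Everything else, namely the reduction to three strands and the upper/lower symmetry, comes essentially for free from \cref{l: A and R over Pu} and Gorenstein duality.
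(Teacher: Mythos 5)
Your proposal is correct and follows essentially the same route as the paper's proof: low strands from \cref{l: A and R over Pu}(1) with $u=0$, high strands by Gorenstein self-duality together with the regularity bound $n-2$, the remaining strand $j-i=\ell$ via \cref{betti-strand} with $a=\ell$ matched against the recursion in \cref{not: gamma} (with the same observation that the $j=2i-2$ Betti numbers sit in degree $\ge n+2$ and so cannot contribute for $k\le\ell+1$), and properties (1) and (2) from \cref{l: A and R over Pu}(6) and (4). The only differences are presentational — you make the duality transport and the $k=0$ base case explicit, where the paper folds them into its displayed four-case formula.
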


\begin{proof} 
Since $A$ is Gorenstein, its minimal free resolution over $Q$ is symmetric. Moreover, since $A$ is Artinian the regularity of $A$ is given by its socle degree, which is $n-2$ by \cref{Hilb R and A}(2$a$).  Using \cref{l: A and R over Pu}(1) and (2) for $u=0$, we have
\begin{equation}
\label{betti_Q A}
\beta_{i,j}^Q(A)=\begin{cases} 
\binom{n}{i},&\ \text{if}\ j = 2i  \text{ and } 0\le i < \ell  \\
0,&\ \text{if}\   j \neq 2i \text{ and }  0\le j-i < \ell\\
0, &\ \text{if}\ j \neq 2i-2 \text{ and } j-i > \ell\\   
\binom{n}{i},&\ \text{if}\ j = 2i-2  \text{ and } i >  \ell+2\,.
\end{cases}
\end{equation}
In particular, when $i\in\{\ell, \ell+1,\ell+2\}$, we have $\beta_{i,j}^Q(A) = 0$ if and only if $j-i\not=\ell$.

It remains to determine the Betti numbers in the $\ell$-th row. 
By symmetry, we have  $\beta_{k, k+\ell}^Q(A)= \beta_{n-k, n-k+\ell}^Q(A)$ for all $0\leq  k\leq \ell+1.$  
We aim to use \cref{betti-strand}, to calculate $\beta_{k, k+\ell}^Q(A)$ for $0\leq k \leq \ell+1$.  To do this, we begin by simplifying the following sum:
\begin{align*}
&\sum_{i=0}^{\ell+k} (-1)^{i+k}\sum_{\substack{j=i\\j-i\ne \ell}}^{k+\ell}\fh_Q(k+\ell-j)\beta_{i,j}^Q(A)\\
= &\sum_{i=0}^{\ell-1} (-1)^{i+k}\sum_{\substack{j=i\\j-i\ne \ell}}^{k+\ell}{\binom{n+k+\ell-j-1}{n-1}}\beta_{i,j}^Q(A)
+ \sum_{i=\ell}^{\ell+2} (-1)^{i+k}\sum_{\substack{j=i\\j-i\ne \ell}}^{k+\ell}\binom{n+k+\ell-j-1}{n-1}\beta_{i,j}^Q(A)\\
&+\sum_{i=\ell+3}^{\ell+k} (-1)^{i+k}\sum_{\substack{j=i\\j-i\ne \ell}}^{k+\ell}\binom{n+k+\ell-j-1}{n-1}\beta_{i,j}^Q(A)\\
= &\sum_{i=0}^{\ell-1} (-1)^{i+k}{ \binom{n+k+\ell-2i-1}{n-1}\binom{n}{i}}.
\end{align*}

The first equality above is obtained by splitting the sum in three cases $0\leq i\le\ell-1$, $\ell \leq i\leq \ell+2$, and $\ell+3\leq i\leq \ell+k$ according to \eqref{betti_Q A} and using the equality $\fh_Q(k+\ell+j)=\binom{n+k+\ell-j-1}{n-1}$ for all $k,i\geq 0.$  For the second equality, the first sum simplifies by \eqref{betti_Q A} and the second sum vanishes by the note thereafter. 
The third sum vanishes because when $i \geq \ell+3$, the only nonzero Betti numbers occur if $j=2i-2 \geq 2\ell+4 = n+2$, but in our case, since $k\leq \ell+1$, we have $j\le k+\ell \leq 2\ell+1=n-1$. 

By \cref{Hilb R and A}(2) we have $\fh_{A}(k+\ell)=\min\left\{ \binom{n}{k+\ell},\binom{n}{k+\ell+2}\right\}=\binom{n}{k+\ell+2}$ and by definition of $Q$ we have $\fh_Q(k-i)=\binom{n+k-i-1}{n-1}$.  
Now substituting $M=A$, $S=Q$, and $a=\ell$ in the  formula of  \cref{betti-strand}, and using the simplified sum above, the symmetry,  and \cref{not: gamma}, we obtain $\beta_{k,k+\ell}^Q(A)=\gamma_k$ for all $k\ge 0$. 

The statements (1) and (2) about the sequence $\{\gamma_i(n)\}_i$ follow from \cref{l: A and R over Pu}, parts (6), respectively (4).  
\end{proof}

\begin{ex} \label{rem:n2n4bettiA}
The Betti tables of $A$ over $Q$ for $n=2$ and $n=4$, respectively, in \cref{betti AQ} are as follows: 
\begin{center}
\begin{tabular}{r| c c c  }
& $0$ & $1$ & $2$  \\
\hline
$0$ & $1$ & $2$ &$1$
\end{tabular}
\qquad 
\begin{tabular}{r| c c c c c }
& $0$ & $1$ & $2$ & $3$  & $4$ \\
\hline
$0$ & $1$ & - & - & -& -\\ 
$1$ & -  & $9$ & $16$ & $9$ & - \\ 
$2$ & - & -  &-&-& $1$  \\ 
\end{tabular}
\end{center}
\end{ex}

\section{Betti numbers of $R$ and $A$ over $Q$}
\label{sec: aci exp}

In this section  we focus on the particular almost complete intersection and Gorenstein rings discussed in the introduction.  Throughout this section, $n\ge 2$ is a fixed integer,  $\kk$ denotes a field of characteristic zero or positive characteristic greater than $n$, and the rings $P$, $R$, and $A$ are as defined in \cref{not:1}, taking $f_i=x_i^2$ for all $0\le i\le n$ and $f_{n+1}=(x_1+\dots+x_n)^2$.  For clarity, we recall this notation with the appropriate substitutions in \cref{not:2} below.       

\begin{notation}
\label{not:2}
Define $Q=\kk[x_1, \dots, x_n]$, where $\kk$ denotes a field of characteristic zero or positive characteristic greater than $n$,  and the ideals:
\begin{equation*}
J \coloneq (x^2_1, \dots, x^2_n),\quad I\coloneq (x^2_1, \dots,x_n^2, (x_1+\dots+x_{n})^2),\quad \text{and}\quad G \coloneq  J : I.
\end{equation*}
We also consider the quotient rings of $Q$:
\begin{equation*}
P \coloneq Q/J,\quad R\coloneq  Q/I,\quad A\coloneq Q/G, \quad \text{and}\quad 
 T\coloneq Q/(x^2_n).
\end{equation*}
Notice that $A\cong P/\ann_P((x_1+\dots+x_n)^2)$.
We also consider the standard graded rings that correspond to the ones above, but with $n-1$ variables instead of $n$ variables:
\begin{align*}
 \ov Q&\coloneq \kk[x_1\dots,x_{n-1}]&
 \ov R&\coloneq \ov Q/(x_1^2,\dots,x_{n-1}^2,(x_1+\dots+x_{n-1})^2)\\
 \ov P&\coloneq \ov Q/(x_1^2,\dots,x_{n-1}^2)&
 \ov A&\coloneq \ov Q/\big((x_1^2, \dots, x_{n-1}^2) : (x_1+\dots +x_{n-1})^2\big).
 \end{align*}
\end{notation}

These rings satisfy our assumptions in \cref{not:1}, and in view of \cref{ex: wlp}, they also satisfy the property that $f_{n+1}=(x_1+\dots+x_n)^2$ is a quadratic maximal rank element of $P$, and hence one can apply the results of  \cref{sec:consequences}.  In this section we use these results to
construct the Betti tables of $R$ and $A$ over $Q$ in the case where $n$ is odd.  Recall that the case where $n$ is even is addressed in  \cref{betti RQ} and \cref{betti AQ}. The results of this section lay the groundwork for an investigation of ideals generated by $n+1$ general quadrics in \cite{aci}.

Now we collect some useful lemmas we will use throughout the section. Recall that $e(R)$ denotes the multiplicity of $R$. 

\begin{lemma}\label{lem:ehRhA bar} 
Adopt the notation and hypothesis in \cref{not:2}. Then: 
\begin{align}
e(R)&={\binom{n+1}{\lfloor\frac{n+1}{2}\rfloor}} \qquad &&e(\ov R)={\binom{n}{\lfloor\frac{n}{2}\rfloor}}\label{eq:e R bar}\\ 
\fh_{R}(i)& = \mathrm{max}\,\left\{{ \binom{n}{i}-\binom{n}{i-2}},0 \right\}\qquad &&\fh_{\ov R}(i)= \mathrm{max}\,\left\{{\binom{n-1}{i}-\binom{n-1}{i-2}},0 \right\}\label{eq:hf R bar}\\
\fh_{A}(i) &=\min\left\{{\binom{n}{i},\binom{n}{i+2}}\right\}\qquad &&\fh_{\ov A}(i)=\min\left\{ {\binom{n-1}{i},\binom{n-1}{i+2}}\right\}.
\end{align}
\end{lemma}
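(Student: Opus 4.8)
The plan is to deduce all six formulas in \cref{lem:ehRhA bar} from \cref{Hilb R and A}, which already gives the Hilbert functions of $R$ and $A$ under the hypothesis that $f_{n+1}$ is a quadratic maximal rank element of $P$. As observed in the paragraph preceding the lemma, the rings of \cref{not:2} satisfy the setting of \cref{not:1} and, by \cref{ex: wlp}, the element $f_{n+1}=(x_1+\dots+x_n)^2$ is a quadratic maximal rank element of $P=Q/(x_1^2,\dots,x_n^2)$; likewise $(x_1+\dots+x_{n-1})^2$ is a quadratic maximal rank element of $\ov P$ in the ring with $n-1$ variables. So the hypotheses of \cref{Hilb R and A} hold for both the $n$-variable and the $(n-1)$-variable versions.

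For the Hilbert function identities, I would simply invoke \cref{Hilb R and A}(1) for $\fh_R$ and \cref{Hilb R and A}(2) for $\fh_A$, and then apply the same two statements verbatim to the ring $\ov Q$ in $n-1$ variables to obtain $\fh_{\ov R}$ and $\fh_{\ov A}$ (replacing every $n$ by $n-1$). This gives the second through sixth displayed equations with essentially no computation.

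For the multiplicity formulas, I would use \cref{Hilb R and A}(1$b$), which says $e(R)=\binom{n+1}{\ell+2}$ where $\ell=\lfloor\frac{n-2}{2}\rfloor$. It then only remains to check the elementary identity $\ell+2=\lfloor\frac{n+1}{2}\rfloor$: indeed if $n=2m$ then $\ell=m-1$ and $\ell+2=m+1=\lfloor\frac{2m+1}{2}\rfloor$, while if $n=2m+1$ then $\ell=m-1$ and $\ell+2=m+1=\lfloor\frac{2m+2}{2}\rfloor$, so in both parities $\ell+2=\lfloor\frac{n+1}{2}\rfloor$ and hence $e(R)=\binom{n+1}{\lfloor(n+1)/2\rfloor}$. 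Applying \cref{Hilb R and A}(1$b$) to $\ov Q$ with $n-1$ variables replaces $n+1$ by $n$ and $\lfloor\frac{n+1}{2}\rfloor$ by $\lfloor\frac{n}{2}\rfloor$, yielding $e(\ov R)=\binom{n}{\lfloor n/2\rfloor}$.

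There is essentially no obstacle here: the lemma is a bookkeeping restatement of \cref{Hilb R and A} specialized to the rings of \cref{not:2} and to the auxiliary $(n-1)$-variable rings, the only substantive point being the index translation $\ell+2=\lfloor\frac{n+1}{2}\rfloor$, which is a one-line parity check. One small care point is to make explicit that \cref{Hilb R and A} applies to the barred rings — i.e. that $(x_1+\dots+x_{n-1})^2$ is a maximal rank element of $\ov P$ — which again is immediate from \cref{ex: wlp} applied with $n-1$ in place of $n$ (valid since $\ch\kk=0$ or $\ch\kk>n>n-1$).
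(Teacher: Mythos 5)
Your proposal takes exactly the paper's route: the paper's proof is the one-line observation that $(x_1+\dots+x_n)^2$ is a maximal rank element of $P$ by \cref{ex: wlp}, so \cref{Hilb R and A} applies (and likewise in $n-1$ variables for the barred rings). One small correction to your parity check: when $n=2m$ is even you have $\ell+2=m+1$ but $\lfloor\frac{n+1}{2}\rfloor=\lfloor\frac{2m+1}{2}\rfloor=m$, so the integer identity $\ell+2=\lfloor\frac{n+1}{2}\rfloor$ actually fails in that case; the stated formula $e(R)=\binom{n+1}{\lfloor(n+1)/2\rfloor}$ is nevertheless correct because $\binom{2m+1}{m+1}=\binom{2m+1}{m}$ by the symmetry of binomial coefficients, so you should invoke that symmetry rather than an equality of indices.
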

\begin{proof}
It suffices to explain the statements for $A$ and $R$. As noted in \cref{ex: wlp}, $(x_1+\dots+x_n)^2$ is a quadratic maximal rank element of $P$, and hence \cref{Hilb R and A} gives the desired formulas. 
\end{proof}

Towards our goal of computing the Betti tables of $R$ and $A$ over $Q$ when $n$ is odd, we aim to show that both $R$ and $A$ are liftable to $Q$, when regarded as $T$-modules. The following results and notation lay the groundwork for this statement. 

\begin{notation}
\label{not:3}
Let $Q=\kk[x_1, \dots, x_n]$  and consider the ideals:
\begin{equation*}
J' \coloneq (x_1^2-x_n^2, \dots, x_{n-1}^2-x_n^2)\quad \text{and} \quad I'\coloneq J'+ (f),\quad\text{ where } f = (x_1+\dots+x_n)^2-x_n^2.
\end{equation*}
 Consider the quotient rings of $Q$:
\begin{equation*}
P'\coloneq Q/J',\quad R'\coloneq  Q/I',\quad\text{and}\quad A'\coloneq P'/\ann_{P'}(f).
\end{equation*}
\end{notation} 

As in \cref{APR}, we have the following short exact sequence.

\begin{remark}
\label{A'P'R'}
Adopt the notation and hypotheses in \cref{not:3}. There exists a short exact sequence of graded $P'$-modules:
\begin{equation}
\label{ses A'P'R'}
0\to A'(-2)\to P'\to R'\to 0
\end{equation}
where the map $P'\to R'$ is the canonical projection, and the map $A'(-2)\to P$ is multiplication by the element $f$. 
\end{remark}

\begin{lemma} 
\label{x1 reg} Adopt the notation and hypotheses in \cref{not:3}. 
If $n\geq 3$ is odd, then the following assertions  hold: 
\begin{enumerate}[\quad $(1)$]
\item $x_n$ is a regular element of $R'$;
\item $x_n$ is a regular element of $A'$.
\end{enumerate}
\end{lemma}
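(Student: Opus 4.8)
The plan is to set up a quotient presentation in which $x_n$ becomes an obviously regular element, namely to work over the subring $\kk[x_1,\dots,x_{n-1}]$ and exploit the fact that the generators $x_i^2-x_n^2$ and $f$ all become, after specializing $x_n$ appropriately, the defining equations of the $(n-1)$-variable rings $\ov R$ and $\ov A$. More precisely, observe that $R'/(x_n)\cong \ov R$ and $A'/(x_n)\cong \ov A$, where $\ov R$ and $\ov A$ are as in \cref{not:2}: modulo $x_n$ the ideal $J'$ becomes $(x_1^2,\dots,x_{n-1}^2)$ in $\ov Q$ and the element $f$ becomes $(x_1+\dots+x_{n-1})^2$, so $I'/(x_n)$ becomes the defining ideal of $\ov R$ and $\ann_{P'}(f)/(x_n)$ becomes $\ann_{\ov P}((x_1+\dots+x_{n-1})^2)$, the defining ideal of $\ov A$. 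Once these identifications are in place, regularity of $x_n$ on $R'$ (resp. $A'$) is equivalent, for a graded module over a polynomial ring, to the numerical statement $\fh_{R'}(i)=\fh_{\ov R}(i-1)+\fh_{\ov R}(i)$ (resp.\ with $A$ in place of $R$) for all $i$ — i.e.\ the Hilbert series satisfy $\Hilb_{R'}(t)=(1+t)\Hilb_{\ov R}(t)$ and $\Hilb_{A'}(t)=(1+t)\Hilb_{\ov A}(t)$. So the whole lemma reduces to two Hilbert-function identities.

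First I would establish the Hilbert series of $P'$. Since $n$ is odd, $J'=(x_1^2-x_n^2,\dots,x_{n-1}^2-x_n^2)$ is generated by $n-1$ quadrics; I claim this is a regular sequence in $Q$, so $\Hilb_{P'}(t)=(1+t)^{n-1}/(1-t)$. (That $J'$ is a complete intersection in the $n$-variable ring follows because modulo $x_n$ it specializes to the Artinian complete intersection $(x_1^2,\dots,x_{n-1}^2)$ in $\ov Q$, which has finite length; a quadric system specializing to a length-finite system after killing one more variable is automatically a regular sequence of the right length.) Equivalently $P'\cong\ov P[x_n]$ as graded rings, with $x_n$ a polynomial variable over $\ov P$ — this is the cleanest way to see it, since in $P'$ we have $x_i^2=x_n^2$ for all $i$, and setting $y=x_n$ the ring is freely generated over $\ov P=\kk[x_1,\dots,x_{n-1}]/(x_1^2,\dots,x_{n-1}^2)$ by $y$ subject to no new relations. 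In particular $x_n$ is $P'$-regular and $\Hilb_{P'}(t)=\Hilb_{\ov P}(t)/(1-t)$.

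Next, for (1): the short exact sequence \eqref{ses A'P'R'} of \cref{A'P'R'} together with the analogous sequence $0\to\ov A(-2)\to\ov P\to\ov R\to 0$ over $\ov Q$ (from \cref{APR} applied in $n-1$ variables) gives $\Hilb_{R'}(t)=\Hilb_{P'}(t)-t^2\Hilb_{A'}(t)$ and $\Hilb_{\ov R}(t)=\Hilb_{\ov P}(t)-t^2\Hilb_{\ov A}(t)$. Since we already know $\Hilb_{P'}=(1+t)^{-1}\cdot$— I mean $\Hilb_{P'}=\Hilb_{\ov P}/(1-t)$, it suffices to prove the $A$-statement (2) and then the $R$-statement follows formally. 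For (2), I would identify $A'$ directly: because $x_n$ is $P'$-regular and $P'\cong\ov P[x_n]$, multiplication by $f$ on $P'$ has annihilator $\ann_{\ov P}(\bar f)\cdot P'$ where $\bar f=(x_1+\dots+x_{n-1})^2$ is the image of $f$ in $\ov P\subset P'$ — here one uses that $f=(x_1+\dots+x_n)^2-x_n^2=\bar f+2x_n(x_1+\dots+x_{n-1})$, and after a change of coordinates replacing $x_n$ by $x_n+(x_1+\dots+x_{n-1})$ (which is still a polynomial-variable complement to $\ov P$ in $P'$ since the $x_i^2$ are all equal to a common value that is unaffected up to the ideal) one can arrange $f$ to be literally an element of $\ov P$. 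Then $A'=P'/\ann_{P'}(f)\cong(\ov P/\ann_{\ov P}(\bar f))[x_n]=\ov A[x_n]$, so $x_n$ is $A'$-regular, proving (2), and consequently $\Hilb_{A'}=(1+t)\Hilb_{\ov A}$ — wait, $=\Hilb_{\ov A}/(1-t)$; in any case $x_n$ is regular, which is exactly (2), and then from the two exact sequences $x_n$ is regular on $R'$ as well.

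The main obstacle I anticipate is the coordinate-change bookkeeping in the last paragraph: $f$ is not literally in the subring $\ov P$ of $P'$, it has a cross term $2x_n(x_1+\dots+x_{n-1})$, so one must either (a) find the right polynomial-variable complement $z=x_n+c(x_1+\dots+x_{n-1})$ making $f\in\kk[x_1,\dots,x_{n-1}]/(\dots)$, being careful that after this substitution $J'$ is still generated by the differences $x_i^2-z'^2$ for the new variable so that $P'\cong\ov P[z]$ still holds, or (b) bypass the ring-isomorphism argument entirely and just prove the two Hilbert-series identities $\fh_{A'}(i)=\fh_{\ov A}(i)+\fh_{\ov A}(i-1)$ and $\fh_{R'}(i)=\fh_{\ov R}(i)+\fh_{\ov R}(i-1)$ numerically from \cref{lem:ehRhA bar}, \cref{Hilb R and A}, and the fact that regularity of a linear form on a graded module is equivalent to the Hilbert function of the quotient being the difference function; option (b) avoids all coordinate changes but requires checking that $x_n$ is maximal-rank-type on each graded piece, which in turn really does come back to knowing $R'/(x_n)=\ov R$ and $A'/(x_n)=\ov A$ — and THAT identification (over $\kk$, killing the single variable $x_n$) is immediate and coordinate-free. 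So I would go with option (b): identify the quotients mod $x_n$, compute both sides' Hilbert functions from \cref{lem:ehRhA bar}, and verify the identity $\binom{n}{i}\pm\dots$ case by case using the odd parity of $n$, which should be a short Pascal's-identity computation.
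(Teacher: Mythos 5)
There is a genuine gap, and it is a circularity at the heart of your ``option (b).'' Regularity of $x_n$ on $R'$ is indeed equivalent to the identity $\fh_{R'}(i)-\fh_{R'}(i-1)=\fh_{\ov R}(i)$ for all $i$ (via $0\to K\to R'(-1)\to R'\to \ov R\to 0$), but to \emph{verify} that identity you must know $\fh_{R'}$ independently, and nothing in \cref{lem:ehRhA bar} or \cref{Hilb R and A} computes it: those results only give the Hilbert functions of the Artinian rings $R,\ov R,A,\ov A$. The ideal $I'$ consists of $n$ quadrics that do \emph{not} form a regular sequence (as $\dim R'=1$), so its Hilbert series is not free. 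The identification $R'/x_nR'\cong\ov R$ only yields the inequality $\fh_{R'}(i)-\fh_{R'}(i-1)\le\fh_{\ov R}(i)$, equivalently $e(R')\le e(\ov R)$; what is missing is a matching \emph{lower} bound. The paper supplies exactly this by exhibiting the point set $X\subset\mathbb P^{n-1}$ of $\pm1$-vectors with one more $+1$ than $-1$ (nonempty and of cardinality $\binom{n}{\lfloor n/2\rfloor}=e(\ov R)$ precisely because $n$ is odd), noting $I'\subseteq I_X$ and hence $e(R')\ge e(Q/I_X)=|X|$; the squeeze forces $K=0$. Your proposal contains no substitute for this lower-bound ingredient, so it cannot rule out a finite-length submodule of $R'$ killed by $x_n$.

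Several auxiliary steps are also off. The isomorphism $P'\cong\ov P[x_n]$ is false: in $P'$ one has $x_i^2=x_n^2\ne 0$, whereas in $\ov P[x_n]$ one has $x_i^2=0$; what is true (and suffices) is that $J'$ is a height-$(n-1)$ complete intersection, so $P'$ is one-dimensional Cohen--Macaulay and $x_n$ is a parameter, hence $P'$-regular. The coordinate change $z=x_n+c(x_1+\dots+x_{n-1})$ cannot place $f=h^2+2hx_n$ inside $\ov P$ (the cross term $2hz$ survives for every $c$), as you half-suspected. The identification $A'/x_nA'\cong\ov A$ is \emph{not} immediate: annihilators do not commute with quotients, and the paper's proof of it (\cref{A/x1}(2)) uses $\Tor_1^Q(R',Q/x_nQ)=0$, i.e.\ part (1) of the very lemma you are proving --- another circularity. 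Finally, your reduction runs in the wrong direction: from $0\to A'(-2)\to P'\to R'\to 0$, regularity of $x_n$ on $P'$ immediately gives regularity on the submodule $A'(-2)$, i.e.\ part (2) is the easy half; the snake lemma does \emph{not} let you deduce (1) from (2), since the kernel of $x_n$ on $R'$ maps to the cokernel on $A'(-2)$, not into anything known to vanish.
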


\begin{proof}
(1): Remark that $\ov R \cong R'/x_n R'$, where $\ov R$ is as defined in \cref{not:2}, and consider the short exact sequence
\begin{equation}
\label{seq}
0\to K\to R'(-1)\xrightarrow{\cdot x_n} R'\to \ov R\to 0,
\end{equation}
where $K$ is the kernel of the map given by multiplication by $x_n$.  To show $x_n$ is regular on $R'$, it suffices to prove that $K=0$. 
 
Let $X$ denote the set of points $p\in \mathbb P^{n-1}(\kk)$ with the property that all entries of $p$ are $\pm 1$ and the number of positive entries of $p$ is equal to one more than the number of negative entries of $p$, where $\mathbb P^{n-1}(\kk)$ is the projective space of dimension $n-1$. Since $n$ is odd, we have that $X$ is nonempty.  We claim that $I'=I_X$, where 
\[I_X\coloneq (g :  g\in \kk[x_1, \dots, x_n]\text{ with } g(p)=0\, \text{ for all }\  p\in X).\] Observe that $V(I')$, the set of zeros of the polynomials in $I'$, is equal to $X$.  In particular, we have $\dim R'=1=\dim (Q/I_X)$ and $I'\subseteq I_X$.

The sequence \eqref{seq} gives an equality of Hilbert series
\begin{equation}
\label{e:hilb}
\Hilb_{R'}(t)(1-t)=\Hilb_{\ov R}(t)-\Hilb_K(t).
\end{equation}
Since $\dim(R')=1$, we can write $\Hilb_{R'}(t)=\frac{q_{R'}(t)}{1-t}$, where $q_{R'}(1)=e(R')$.
For $i$ large, one can see that $\m^i=x_n\m^{i-1}$, where $\m$ denotes the maximal homogeneous ideal of $R'$, and hence $\m^iK=0$. Thus, \eqref{e:hilb} translates into an equality of polynomials: 
\begin{equation*}
q_{R'}(t)=\Hilb_{\overline R}(t)-\Hilb_K(t).
\end{equation*}
Evaluating at $t=1$, we obtain: 
\begin{equation}
\label{eR'}
e(R')\le e(\ov R),
\end{equation}
with equality if and only if $K=0$. 
Recall that $I'\subseteq I_X$, and hence $Q/I_X$ is a homomorphic image of $R'$. In particular, 
\begin{equation}
\label{eIX}
e(Q/I_X)\le e(R').
\end{equation} 
Since the set $X$ of points has cardinality $\binom{n}{\lfloor \frac{n}{2}\rfloor}$, we conclude that $e(Q/I_X)=\binom{n}{\lfloor \frac{n}{2}\rfloor}$. On the other hand, 
the ring $\ov R$ is Artinian with $e(\ov R)=\binom{n}{\lfloor \frac{n}{2}\rfloor}$ by \eqref{eq:e R bar} in \cref{lem:ehRhA bar}. Hence,  equalities  must hold in both \eqref{eR'} and  \eqref{eIX}, so  $K=0$ (and $I'=I_X$).

(2):  It is easy to show that $x_n$ is a regular element of $P'$. By part (1), $x_n$ is regular element of $R'$. The rows of the commutative diagram below are exact by \cref{A'P'R'}.
\[ 
\xymatrixrowsep{1.5pc}
\xymatrixcolsep{1pc}
\xymatrix{
0\ar@{->}[r]^{} 
&A'(-2) \ar@{->}[r]^{} \ar@{->}[d]_{\cdot x_n} 
&P'\ar@{->}[r]^{} \ar@{->}[d]_{\cdot x_n}
&R'\ar@{->}[r]^{} \ar@{->}[d]_{\cdot x_n} 
&0\\
0\ar@{->}[r]^{} 
&A'(-2) \ar@{->}[r]^{} 
&P'\ar@{->}[r]^{}
&R'\ar@{->}[r]^{} 
&0
}
\]
Now applying the Snake Lemma to the diagram  yields the desired conclusion.
\end{proof}

\begin{lemma}
\label{A/x1}
Adopt the notation and hypotheses in \cref{not:2} and \cref{not:3}. If $n\geq 3$ is odd, then there are canonical ring isomorphisms:
\begin{enumerate}[\quad $(1)$] 
\item $A\cong A'/x_n^2A'$;
\item $\overline A\cong  A'/x_nA'\cong A/x_nA$. 
\end{enumerate}
\end{lemma}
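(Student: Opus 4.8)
The plan is to reduce the whole statement to one general fact about how the annihilator of a principal multiplication map behaves under a quotient, combined with the structural descriptions $A\cong P/\ann_P\!\big((x_1+\dots+x_n)^2\big)$ and $A'\cong P'/\ann_{P'}(f)$ from \cref{not:1}--\cref{not:3}, and the fact from \cref{x1 reg}(1) that $x_n$ is a nonzerodivisor on $R'$. The first task is elementary ideal bookkeeping in $Q$: since $x_n^2\in J$ and $(x_1+\dots+x_n)^2=f+x_n^2$, one has $J=J'+(x_n^2)$ and $I=I'+(x_n^2)$, while reducing modulo $x_n$ turns $J'$ into $(x_1^2,\dots,x_{n-1}^2)$, turns $I'$ into $(x_1^2,\dots,x_{n-1}^2,(x_1+\dots+x_{n-1})^2)$, and turns $f$ into $(x_1+\dots+x_{n-1})^2$. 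Consequently $P'/x_n^2P'\cong P$ and $R'/x_n^2R'\cong R$ (so that $\ker(P'\to P)=x_n^2P'$ and the image of $f$ in $P$ is $(x_1+\dots+x_n)^2$), and likewise $P'/x_nP'\cong\overline P$, $R'/x_nR'\cong\overline R$ with the image of $f$ in $\overline P$ equal to $(x_1+\dots+x_{n-1})^2$. Also, $P'/fP'=R'$ is exactly the cokernel appearing in \cref{A'P'R'}.

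The key step is the following observation, which I would state and prove as a short lemma: if $\varphi\colon S'\twoheadrightarrow S$ is a surjection of rings with $\ker\varphi=zS'$, if $g'\in S'$ and $g=\varphi(g')$, and if $z$ is a nonzerodivisor on $S'/g'S'$, then $\varphi$ induces a ring isomorphism $S'/\big(\ann_{S'}(g')+zS'\big)\xrightarrow{\ \sim\ }S/\ann_S(g)$. For this, $\varphi$ maps $\ann_{S'}(g')$ into $\ann_S(g)$ and $zS'$ to $0$, so it descends to a surjection; for injectivity, if $\varphi(h')\in\ann_S(g)$ then $h'g'\in\ker\varphi=zS'$, say $h'g'=zk'$, and then the image of $k'$ in $S'/g'S'$ is killed by $z$, hence is $0$ by regularity, so $k'=g'm'$ for some $m'$; then $g'(h'-zm')=0$, giving $h'\in\ann_{S'}(g')+zS'$. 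Applying this with $(S',z,g')=(P',x_n^2,f)$ — valid since $S'/g'S'=P'/fP'=R'$ and $x_n^2$ is $R'$-regular by \cref{x1 reg}(1) — gives $A'/x_n^2A'=P'/\big(\ann_{P'}(f)+x_n^2P'\big)\cong P/\ann_P\!\big((x_1+\dots+x_n)^2\big)=A$, which is part (1). Applying it instead with $(S',z,g')=(P',x_n,f)$ — again $S'/g'S'=R'$ and $x_n$ is $R'$-regular — gives $A'/x_nA'=P'/\big(\ann_{P'}(f)+x_nP'\big)\cong\overline P/\ann_{\overline P}\!\big((x_1+\dots+x_{n-1})^2\big)=\overline A$.

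Finally, to identify $A/x_nA$ with $A'/x_nA'$ I would simply invoke part (1): the ring isomorphism $A\cong A'/x_n^2A'$ gives $A/x_nA\cong(A'/x_n^2A')/x_n(A'/x_n^2A')=A'/(x_nA'+x_n^2A')=A'/x_nA'$, and this chains with the previous isomorphism to complete part (2). An alternative, closer to the style of \cref{x1 reg}(2), is to apply the Snake Lemma to the short exact sequence of \cref{A'P'R'} with the three vertical maps being multiplication by $x_n^2$ (resp.\ $x_n$) — here one additionally uses that $x_n$ is $A'$-regular, \cref{x1 reg}(2) — obtaining $0\to(A'/x_n^2A')(-2)\to P\to R\to0$ and $0\to(A'/x_nA')(-2)\to\overline P\to\overline R\to0$, and then comparing these with the sequences of \cref{APR} for the rings of \cref{not:2} and for the barred rings (which satisfy \cref{not:1} with $n$ replaced by $n-1$, since $\ch\kk\ne2$). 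The only real subtlety in any version is keeping track that the isomorphisms are ring isomorphisms and not merely of graded modules; the argument above sidesteps this by producing them directly as the maps induced by the evident ring surjections, so the sole nonformal ingredient is the regularity of $x_n$ on $R'$, which is precisely where the hypothesis that $n$ is odd enters.
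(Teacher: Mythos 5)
Your proposal is correct, and it rests on exactly the same non-formal input as the paper's proof: the regularity of $x_n$ (hence of $x_n^2$) on $R'$ from \cref{x1 reg}(1), together with the identifications $P'/x_n^2P'\cong P$, $P'/x_nP'\cong \overline P$ and the fact that $f$ maps to $(x_1+\dots+x_n)^2$, respectively $(x_1+\dots+x_{n-1})^2$. Where you differ is in the packaging. The paper tensors the short exact sequence $0\to A'(-2)\to P'\to R'\to 0$ of \cref{A'P'R'} with $T$ (resp.\ $Q/x_nQ$), uses regularity of $x_n^2$ (resp.\ $x_n$) on $R'$ to kill $\Tor_1^Q(R',-)$, and then identifies $A'/x_n^2A'$ with $A$ (resp.\ $A'/x_nA'$ with $\overline A$) by comparing the resulting sequence with the sequence \eqref{ses APR} of \cref{APR}. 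You instead isolate an explicit elementary lemma: for a surjection $S'\twoheadrightarrow S$ with principal kernel $zS'$ and $z$ regular on $S'/g'S'$, one gets $S'/(\ann_{S'}(g')+zS')\cong S/\ann_S(g)$, and your injectivity argument is, in effect, a hands-on unwinding of the same $\Tor_1$ vanishing. What your version buys is that the isomorphisms are visibly induced by the canonical ring surjections $P'\to P$ and $P'\to\overline P$ (so ``canonical ring isomorphism'' requires no further comment), and the two parts become two instances of one statement; the paper's version buys uniformity with the homological style of the surrounding section and feeds directly into the liftability argument of \cref{t:RAoverQ}. Your derivation of $A/x_nA\cong A'/x_nA'$ from part (1) is identical to the paper's. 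No gaps.
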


\begin{proof} (1): Tensoring the short exact sequence \eqref{ses A'P'R'} in \cref{A'P'R'} with $T$, we obtain an exact sequence of $Q$-modules
\[
\Tor_1^Q(R',T)\to (A'/x_n^2A')(-2)\to P'/x_n^2P'\xrightarrow{\pi'} R'/x_n^2R'\to 0,
\]
where $\pi'$ is the natural projection. 
By \cref{x1 reg}(1), the element $x_n^2$ is regular on $R'$. Thus,  we have $\Tor_1^Q(R',T)=0$. Further, using the natural isomorphisms of $Q$-modules  $P'/x_n^2P'\cong P$ and $R'/x_n^2R'\cong R$ we obtain  an exact sequence
$Q$-modules 
\begin{equation}
\label{eq: A'PR}
0\to (A'/x_n^2A')(-2)\to P\xrightarrow{\pi} R\to 0,
\end{equation}
where $\pi$ is the natural projection and the second map is given by multiplication by $f$, equivalently, multiplication by $(x_1+\dots+x_n)^2$. Comparing \eqref{eq: A'PR} with the short exact sequence \eqref{ses APR} in \cref{APR}, we obtain a canonical isomorphism $A\cong A'/x_n^2A'$ that sends the image of the variable $x_i$ in one ring to the image of the variable $x_i$ in the other ring for each $i$.  

(2): Tensoring the short exact sequence \eqref{ses A'P'R'} in \cref{A'P'R'} with $Q/x_nQ$ and using the same argument, we obtain a short exact sequence 
\[
0=\Tor_1^Q(R',Q/x_nQ)\to A'/x_nA'(-2)\to P'/x_n P'\xrightarrow{\pi''} R'/x_n R'\to 0,
\]
where $\pi''$ is the natural projection.
Observe that there are natural isomorphisms of $Q$-modules $P'/x_n P'\cong \overline P$ and  $R'/x_n R'\cong \overline{R}$. 
Thus, we get a short exact sequence 
\[
0\to (A'/x_nA')(-2)\to \overline P\xrightarrow{\overline \pi} \overline R\to 0,
\]
where $\overline\pi$ is the natural projection and the second map is multiplication by $f$, equivalently, by $(x_1+\dots+x_{n-1})^2$.  Comparing with the exact sequence \eqref{ses APR} in \cref{APR} for the rings $\overline R$, $\overline P$ and $\overline A$, we obtain a canonical isomorphism $\overline A \cong A'/x_nA'$ that sends the image of the variable $x_i$ in one ring to the image of the variable $x_i$ in the other ring for each $i$. This proves the first isomorphism in (2). Using the isomorphism in (1), the second isomorphism in (2) holds as we have
\begin{align*}
A/x_nA&\cong \left(A'/x_n^2A'\right)\Big/ x_n\left(A'/x_n^2A'\right)\cong A'/x_nA'. \qedhere
\end{align*}
\end{proof}

The next result shows that $x_n$ is an exact zero divisor on both $R$ and $A$; see \cite{HenriquesSega11} for the terminology and related results. This is a key ingredient in the proof of \cref{betti reduction}  where we compute the Betti numbers of $R$ and $A$ over $T$.  

\begin{lemma} 
\label{l:R-ezd} Adopt the notation and hypotheses in \cref{not:2}. If $n\ge 3$ is odd, then the following equalities hold:
\begin{enumerate}[\quad $(1)$]
\item $\ann_R(x_n)=x_nR$;
\item $\ann_A(x_n)=x_nA$.
\end{enumerate}
Thus $x_n$ is an exact zero divisor on both $R$ and $A$.
\end{lemma}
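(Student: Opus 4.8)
The plan is to reduce everything to one elementary principle and then feed in the lemmas already in place: \textbf{if $x$ is a non-zerodivisor on a commutative ring $S$, then the image of $x$ in $S/x^2S$ has annihilator exactly $x(S/x^2S)$.} I would prove this in two lines. One containment is clear, since $x\cdot x(S/x^2S)=x^2(S/x^2S)=0$. For the other, if $x\bar a=0$ in $S/x^2S$ then $xa=x^2b$ for some $b\in S$, so $x(a-xb)=0$ in $S$, hence $a=xb$ because $x$ is regular, i.e.\ $\bar a=x\bar b\in x(S/x^2S)$. Note that this equality, when $S/x^2S\neq 0$, already forces $x\neq 0$ and $x^2=0$ in $S/x^2S$, so it says precisely that $\{x,x\}$ is an exact pair of zero divisors on $S/x^2S$ in the sense of \cite{HenriquesSega11}; thus proving the two displayed equalities will automatically yield the last sentence of the statement.

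With this principle in hand, both assertions follow by choosing $S$ appropriately. For $(2)$ I would take $S=A'$: by \cref{x1 reg}(2), $x_n$ is a non-zerodivisor on $A'$, and by \cref{A/x1}(1) there is a ring isomorphism $A\cong A'/x_n^2A'$ carrying $x_n$ to $x_n$, so the principle gives $\ann_A(x_n)=x_nA$. For $(1)$ I would take $S=R'$: by \cref{x1 reg}(1), $x_n$ is a non-zerodivisor on $R'$, and $R'/x_n^2R'\cong R$ as $Q$-algebras identifying $x_n$ with $x_n$ — indeed $I'+(x_n^2)$ contains each $x_i^2-x_n^2$ together with $x_n^2$, hence each $x_i^2$, as well as $(x_1+\dots+x_n)^2-x_n^2$, hence $(x_1+\dots+x_n)^2$, so $I'+(x_n^2)=I$ (this is the isomorphism already used in the proof of \cref{A/x1}(1)). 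The principle then gives $\ann_R(x_n)=x_nR$. Since the quotient rings $R$ and $A$ are nonzero, the closing claim that $x_n$ is an exact zero divisor on each is now immediate from the paragraph above.

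I do not expect a genuine obstacle here: the substantive work has already been done in \cref{x1 reg} (regularity of $x_n$ on $R'$ and $A'$) and \cref{A/x1} (the identifications $R\cong R'/(x_n^2)$ and $A\cong A'/(x_n^2)$). The only points to be careful about are that the isomorphisms of \cref{A/x1} are genuine ring isomorphisms matching $x_n$ with $x_n$, so that they transport the annihilator of $x_n$ correctly, and that $R,A\neq 0$, so that "exact zero divisor" is the correct conclusion rather than a degenerate one.
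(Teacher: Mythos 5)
Your proof is correct, but it takes a genuinely different route from the paper's. The paper proves $\ann_R(x_n)=x_nR$ by a purely numerical argument: it writes down the two short exact sequences relating $R$, $x_nR$, $\ann_R(x_n)$ and $\ov R\cong R/x_nR$, reduces the desired equality to the identity $\fh_{\ov R}(i+1)+\fh_{\ov R}(i)=\fh_R(i+1)$, and verifies that identity from the explicit binomial formulas of \cref{lem:ehRhA bar} via Pascal's identity and a case analysis on $i$ (and similarly for $A$). You instead isolate the elementary principle that if $x$ is a non-zerodivisor on $S$ then the image of $x$ in $S/x^2S$ has annihilator exactly $x(S/x^2S)$ --- your two-line verification of this is correct --- and then feed in \cref{x1 reg} (regularity of $x_n$ on $R'$ and $A'$) together with the identifications $R\cong R'/x_n^2R'$ and $A\cong A'/x_n^2A'$; the first identification is exactly the computation $I'+(x_n^2)=I$ used inside the proof of \cref{A/x1}(1), and the second is \cref{A/x1}(1) itself, both available and non-circular at this point in the paper. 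Your closing observations (that the equality $\ann_T(x)=xT$ with $T\ne 0$ forces $x\ne 0$, and that $R,A\ne 0$) correctly dispose of the ``exact zero divisor'' conclusion. What each approach buys: the paper's argument is self-contained modulo the Hilbert function formulas and makes no use of the one-dimensional lifts beyond what is already needed elsewhere, whereas yours is shorter given the surrounding lemmas, avoids the binomial case analysis entirely, and makes transparent the structural reason the exact zero divisor appears --- namely, it is the standard construction of an exact pair $\{x,x\}$ on $S/x^2S$ from a regular element $x$, which is also the mechanism behind the liftability statement in \cref{t:RAoverQ}.
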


\begin{proof}  
(1):  First observe that  $x_nR\subseteq \ann_R(x_n)$.  To establish the desired equality it suffices to prove that the Hilbert functions are equal; that is,  $\fh_{x_nR}(i)=\fh_{\ann_R(x_n)}(i)$ for all $i\geq 0$.
 
 From the short exact sequences
\begin{equation}
\label{ses}
    0 \to \ann_R(x_n) \to R \xrightarrow{\cdot x_n} (x_nR)(1) \to 0\quad\text{and}\quad  0 \to x_nR \to R \xrightarrow{}R/x_nR \to 0
\end{equation}
and the fact that $\ov R\cong R/x_n R$ we get the equalities:
\begin{align}
\label{h eq1}
\fh_{\ann_R(x_n)}(i) &= \fh_R(i) - \fh_{x_nR}(i+1)\\
&=\fh_R(i)-\fh_R(i+1)+\fh_{\ov R}(i+1) \nonumber
\end{align}
for all $i\geq 0$.  We claim that the following equalities hold:
\begin{equation}
\label{h eq2}
   \fh_{\ov R}(i+1)+\fh_{\ov R}(i) = \fh_R(i+1),
\end{equation}
for all $i\geq 0$. Assuming the claim, we obtain the desired equality of Hilbert functions:
\begin{align*}
\fh_{\ann_R(x_n)}(i)
&=\fh_R(i)-\fh_R(i+1)+\fh_{\ov R}(i+1)\\
&= \fh_R(i)-\fh_{\ov R}(i)\\
&=\fh_{x_nR}(i).
\end{align*}
The first equality is \eqref{h eq1}, the second equality follows from  the claim \eqref{h eq2}, and the third equality uses the second short exact sequence of \eqref{ses}.

Now we prove the claim \eqref{h eq2} for a fixed $i\geq 0.$
Using \eqref{eq:hf R bar} in \cref{lem:ehRhA bar}, we need to establish the following equality:
\begin{equation*}
\mathrm{max}\,\left\{  {\binom{n-1}{i+1}-\binom{n-1}{i-1}}, 0 \right\}+\mathrm{max}\,\left\{{ \binom{n-1}{i}-\binom{n-1}{i-2}},0 \right\}=\mathrm{max}\,\left\{  {\binom{n}{i+1}-\binom{n}{i-1}},0 \right\}. 
\end{equation*}
Set $a\coloneq   {\binom{n-1}{i+1}-\binom{n-1}{i-1}}$ and  $b\coloneq{ \binom{n-1}{i}-\binom{n-1}{i-2}}$ and 
notice that $a+b=\binom{n}{i+1}-\binom{n}{i-1}$ using Pascal's identity. Thus,  to prove that $\eqref{h eq2}$ holds, it suffices to establish the following equality:
\begin{equation}
\label{h eq3 first}
    \max\{a,0\}+\max\{b,0\} = \max\{a+b,0\}.
\end{equation}

By hypothesis $n\ge 3$ is odd, so we can write $n=2m+1$ with $m>0$. Considering each of the cases $i<m$, $i=m$, $i=m+1$, $i>m+1$, one can clearly see that the equality \eqref{h eq3 first} holds, which completes the proof. 

(2): The proof is similar to part (1). We need to prove that 
$\fh_{x_nA}(i)=\fh_{\ann_A(x_n)}(i)$ for all $i\geq 0$ which reduces to showing an equality similar to \eqref{h eq2}:
\begin{equation}
\label{h eq3}
   \fh_{\ov A}(i+1)+\fh_{\ov A}(i) = \fh_A(i+1),
\end{equation}
for all $i\geq 0$. In view of \eqref{eq:hf R bar}, we need to show
\[
\min\left\{ {\binom{n-1}{i+1},\binom{n-1}{i+3}}\right\}+\min\left\{ {\binom{n-1}{i},\binom{n-1}{i+2}}\right\}=\min\left\{ {\binom{n}{i+1},\binom{n}{i+3}}\right\}\,.
\]
The verification of this equality can be done as above using Pascal's identity and by considering each of the cases $i<m-1$, $i=m-1$ and $i=m$ and $i>m$ separately.
\end{proof}

\begin{proposition}
\label{betti reduction}
Adopt the notation and hypotheses in \cref{not:2}. If  $n\ge 3$ is odd, then the following equalities hold for all $i,j\ge 0$: 
\begin{enumerate}[\quad $(1)$]
\item  $\beta_{i,j}^{T}(R)=\beta_{i,j}^{\ov Q}(\ov R)$;
\item  $\beta_{i,j}^{T}(A)=\beta_{i,j}^{\ov Q}(\ov A).$
\end{enumerate}
\end{proposition}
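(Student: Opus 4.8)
The plan is to reduce both statements to the standard base-change fact that killing an element which is a non-zerodivisor on the module turns a minimal graded free resolution into a minimal graded free resolution of the quotient, and then to feed in the rings $R'$ and $A'$ of \cref{not:3} as simultaneous lifts.

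The first step is to record this base-change fact: if $S$ is a standard graded $\kk$-algebra, $a\in S$ a homogeneous element, and $N$ a finitely generated graded $S$-module on which $a$ is a non-zerodivisor, then $\beta_{i,j}^{S/(a)}(N/aN)=\beta_{i,j}^S(N)$ for all $i,j$. Indeed, resolving $S/(a)$ by $0\to S\xrightarrow{a}S\to S/(a)\to 0$ shows $\Tor_1^S(N,S/(a))\cong (0:_N a)=0$ and $\Tor_p^S(N,S/(a))=0$ for $p\ge 2$; hence tensoring a minimal graded free resolution of $N$ over $S$ with $S/(a)$ gives a free resolution of $N/aN$ over $S/(a)$, and it stays minimal because the differentials have entries in the homogeneous maximal ideal of $S$, whose image is the homogeneous maximal ideal of $S/(a)$.

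The second step is to apply this with $S=Q$ in two ways. By \cref{x1 reg}, $x_n$ is a non-zerodivisor on $R'$ and on $A'$, hence so is $x_n^2$. From the defining ideals in \cref{not:3} one checks directly that $I'+(x_n)=(x_1^2,\dots,x_{n-1}^2,(x_1+\dots+x_{n-1})^2)+(x_n)$ and $I'+(x_n^2)=I$, so $R'/x_nR'\cong\ov R$ and $R'/x_n^2R'\cong R$ (the latter is also noted in the proof of \cref{A/x1}); for $A$ the analogous identifications $A'/x_nA'\cong\ov A$ and $A'/x_n^2A'\cong A$ are exactly \cref{A/x1}. Now applying the base-change fact with $a=x_n$ (and $\ov Q\cong Q/(x_n)$) gives $\beta_{i,j}^{\ov Q}(\ov R)=\beta_{i,j}^Q(R')$ and $\beta_{i,j}^{\ov Q}(\ov A)=\beta_{i,j}^Q(A')$, while applying it with $a=x_n^2$ (and $T=Q/(x_n^2)$) gives $\beta_{i,j}^T(R)=\beta_{i,j}^Q(R')$ and $\beta_{i,j}^T(A)=\beta_{i,j}^Q(A')$. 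Comparing the two pairs of identities yields $\beta_{i,j}^T(R)=\beta_{i,j}^{\ov Q}(\ov R)$ and $\beta_{i,j}^T(A)=\beta_{i,j}^{\ov Q}(\ov A)$, as desired.

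I do not expect a real obstacle: all the content is already in the preceding lemmas — that $x_n$ is a non-zerodivisor on $R'$ and $A'$ (\cref{x1 reg}, which in turn rested on the point-configuration computation of $e(R')$) and the identification of the reductions of $R'$ and $A'$ modulo $x_n$ and modulo $x_n^2$ (\cref{A/x1}). The only things needing minor care are the two ideal equalities above and the observation that minimality survives the base change, both of which are routine. One could repackage this by saying that $R'$ and $A'$ exhibit $R$ and $A$ as lifts from $T$ to $Q$, compatibly with the reduction to $\ov Q$, but the direct base-change argument is the cleanest route.
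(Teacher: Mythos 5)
Your argument is correct, but it takes a genuinely different route from the paper's. The paper stays downstairs: it uses the exact zero divisor property $\ann_R(x_n)=x_nR$ from \cref{l:R-ezd} (itself a pure Hilbert-function computation) to build the periodic minimal resolution of $T/x_nT$ over $T$, tensors it with $R$ to conclude $\Tor_i^T(T/x_nT,R)=0$ for $i>0$, and deduces that reducing a minimal $T$-free resolution of $R$ modulo $x_n$ stays minimal and acyclic over $T/x_nT\cong \ov Q$. You instead go upstairs to $Q$ via the lift $R'$ (resp.\ $A'$) and descend in two ways: killing $x_n$ identifies $\beta_{i,j}^{\ov Q}(\ov R)$ with $\beta_{i,j}^Q(R')$, and killing $x_n^2$ identifies $\beta_{i,j}^{T}(R)$ with $\beta_{i,j}^Q(R')$, both by the standard base-change lemma for a homogeneous non-zerodivisor. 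Your inputs (\cref{x1 reg} and \cref{A/x1}, plus the easy ideal identities $I'+(x_n^2)=I$ and $I'+(x_n)=(x_1^2,\dots,x_{n-1}^2,(x_1+\cdots+x_{n-1})^2)+(x_n)$) are all established in the paper and are needed there anyway for the liftability argument in \cref{t:RAoverQ}, so your route adds no new dependencies; it bypasses \cref{l:R-ezd} entirely and yields the extra information that both sets of Betti numbers equal $\beta_{i,j}^Q(R')$. What it loses is the intrinsic exact zero divisor viewpoint, which the paper highlights for its own sake. One small imprecision: your base-change lemma as stated omits the hypothesis that $a$ be a non-zerodivisor on $S$ itself, which your proof sketch uses when resolving $S/(a)$ by $0\to S\to S\to S/(a)\to 0$; this is harmless here since $S=Q$ in both applications, but the hypothesis should be recorded.
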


\begin{proof} 
(1): Since $T=Q/x_n^2Q$ we have the equality $\ann_{T}(x_n)=x_nT$. Thus, a minimal graded free resolution of  $T/x_nT$ over $T$ has the form:
\[ F_{\bullet}: \quad  \cdots \to T(-2) \xrightarrow{\cdot x_n} T(-1) \xrightarrow{\cdot x_n} T\to T/x_n T\to 0. \]
By \cref{l:R-ezd}(1), we have  $\ann_R(x_n)=x_nR$, hence by tensoring $F_{\bullet}$ with $R$ over $ T$  we also obtain a minimal graded free resolution of $R/x_nR$ over $R$:
\[ F_{\bullet}\otimes_{ T}R: \quad  \cdots \to 
R(-2) \xrightarrow{\cdot x_n} 
R(-1)\xrightarrow{\cdot x_n} R\to R/x_nR\to 0. \] 
In particular, we obtain   
\begin{eqnarray}\label{Tor_Pu{1}}
 \Tor^{ T}_i( T/x_n T, R)=0\quad\text{for all}\quad i>0. 
\end{eqnarray}
If $G_{\bullet}$ is a  minimal graded free resolution of  $R$ over $ T$, then 
\[\HH_i(  T/x_n T\otimes_{ T}G_{\bullet} )= \Tor^{ T}_i( T/x_n T, R)=0 \text{ for all } i>0.\]
Hence, $ T/x_n T\otimes_{ T} G_{\bullet}$ is an acyclic complex, so it is a minimal graded free  resolution of $ T/x_n T\otimes_{ T} R\cong  R/x_nR$ over $ T/x_n T$.
Therefore, we obtain  
\[\beta_{i,j}^{ T}(R)=\beta_{i,j}^{ T/x_n T}(R/x_nR)\quad\text{for all}\quad i,j\geq 0.\] Noticing  that $ T/x_n T\cong Q/x_n Q\cong\overline Q$,  the desired conclusion follows.

(2): Using \cref{l:R-ezd}(2) instead of \cref{l:R-ezd}(1), the proof follows as in  (1).
\end{proof}

\chunk
\label{exact-sequence}
Let $S$ be a standard graded Noetherian ring, $z$ a homogeneous regular element of $S$  of degree $d$, and set $\overline S\coloneq S/zS$.  Then, for finitely generated graded $\overline S$-modules $M$, $N$, the mapping cone of the {\it Eisenbud operator} $\chi_{i,j}$ defined by $z$ (see \cite{eisen}) gives the exact sequence, for all $i,j\in\BZ$:
\begin{equation*}
\dots \to \Tor_i^{S}(M,N)_j\to \Tor_i^{\overline S}(M,N)_j\xrightarrow{\chi_{i,j}}\Tor_{i-2}^{\overline S}(M,N)_{j-d}\to \Tor_{i-1}^{S}(M,N)_j\to \dots
\end{equation*}

A graded $\overline S$-module $M$ is said to be {\it liftable to $S$} if there exists a graded $S$-module $M'$ such that $M\cong M'\otimes_S \overline S$ and $\Tor_i^S(M',\overline S)=0$ for all $i>0$.
If $M$ is liftable to $S$ then for all $i,j\in\BZ$  the map $\chi_{i,j}$ is zero for all graded $\overline S$-modules $N$; see for example \cite[Theorem 3.1]{Dao}. 

\begin{proposition}
\label{t:RAoverQ} 
Adopt the notation and hypotheses in \cref{not:2}. If $n\geq 3$ is odd, then the $T$-modules $R$ and $A$ are liftable to $Q$, and hence the following equalities hold: 
\begin{align*}
\beta_{i,j}^Q(R)&=\beta_{i,j}^{ T}(R)+\beta_{i-1,j-2}^{ T}(R)=\beta_{i,j}^{\overline Q}({\overline R})+\beta_{i-1,j-2}^{\overline Q}({\overline R});\\ \beta_{i,j}^Q(A)&=\beta_{i,j}^{ T}(A)+\beta_{i-1,j-2}^{ T}(A)=\beta_{i,j}^{\overline Q}(\overline A)+\beta_{i-1,j-2}^{\overline Q}({\overline A}).
\end{align*} 
\end{proposition}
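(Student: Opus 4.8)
The plan is to produce explicit lifts. Concretely, I will argue that the $Q$-module $R'$ of \cref{not:3} is a lift of the $T$-module $R$, and that $A'$ is a lift of the $T$-module $A$; by the discussion in \cref{exact-sequence} it then suffices to verify the two defining conditions for liftability in each case. For the first condition we need $R \cong R' \otimes_Q T$ and $A \cong A' \otimes_Q T$ as $T$-modules. Since $T = Q/x_n^2 Q$, we have $R' \otimes_Q T = R'/x_n^2 R'$, and the identity of ideals $I' + x_n^2 Q = I$ gives the canonical isomorphism $R'/x_n^2 R' = Q/(I' + x_n^2 Q) \cong Q/I = R$ (this is the identification already recorded in the proof of \cref{A/x1}); the corresponding statement for $A$ is exactly \cref{A/x1}(1), namely $A \cong A'/x_n^2 A' = A' \otimes_Q T$. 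For the second condition, observe that $x_n^2$ is a nonzerodivisor on the domain $Q$, so $T$ admits the two-term graded free resolution
\[
0 \to Q(-2) \xrightarrow{\;\cdot x_n^2\;} Q \to T \to 0
\]
over $Q$. Hence $\Tor_i^Q(R', T) = 0$ automatically for $i \ge 2$, while $\Tor_1^Q(R', T) \cong (0 :_{R'} x_n^2)$, which vanishes because $x_n$---and therefore $x_n^2$---is a nonzerodivisor on $R'$ by \cref{x1 reg}(1). The same computation with \cref{x1 reg}(2) gives $\Tor_1^Q(A', T) = 0$. This establishes that $R$ and $A$ are liftable to $Q$ as $T$-modules.

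With liftability in hand, the Betti number identities follow formally. I will apply the discussion of \cref{exact-sequence} with $S = Q$, $\overline S = T$, $z = x_n^2$ (so $d = 2$), $N = \kk$, and $M$ equal to $R$ or $A$. Since $M$ is liftable to $Q$, all Eisenbud operators $\chi_{i,j}$ vanish, so the long exact sequence there collapses into short exact sequences
\[
0 \to \Tor_{i-1}^{T}(M,\kk)_{j-2} \to \Tor_i^{Q}(M,\kk)_j \to \Tor_i^{T}(M,\kk)_j \to 0
\]
for all $i,j$. Counting $\kk$-dimensions yields $\beta_{i,j}^Q(M) = \beta_{i,j}^{T}(M) + \beta_{i-1,j-2}^{T}(M)$, which is the first equality on each line of the statement; substituting the identifications $\beta_{i,j}^{T}(R) = \beta_{i,j}^{\overline Q}(\overline R)$ and $\beta_{i,j}^{T}(A) = \beta_{i,j}^{\overline Q}(\overline A)$ from \cref{betti reduction} then gives the second equality on each line.

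I do not anticipate a genuine obstacle here: the real content---that $x_n$ is a nonzerodivisor on $R'$ and $A'$, and that $R$, $A$ are their reductions modulo $x_n^2$---has already been carried out in \cref{x1 reg} and \cref{A/x1}, and the collapse of the Eisenbud long exact sequence under liftability is quoted from \cite[Theorem 3.1]{Dao}. The only points requiring a moment of attention are the (immediate) passage from ``$x_n$ is a nonzerodivisor'' to ``$x_n^2$ is a nonzerodivisor,'' and making sure the liftability criterion is invoked with $R'$, $A'$ occupying the first module slot. If one prefers to bypass the Eisenbud-operator formalism, the same short exact sequences can be obtained from the change-of-rings spectral sequence $\Tor_p^{T}(M, \Tor_q^Q(T,\kk)) \Rightarrow \Tor_{p+q}^Q(M,\kk)$, whose $E_2$-page is concentrated in the rows $q = 0$ and $q = 1$ (since $\pd_Q T = 1$) and whose single possibly nonzero differential is precisely $\chi$, forced to vanish by liftability.
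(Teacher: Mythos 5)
Your proposal is correct and follows essentially the same route as the paper: identify $R\cong R'\otimes_Q T$ and $A\cong A'\otimes_Q T$, use \cref{x1 reg} to see that $x_n^2$ is regular on $R'$ and $A'$ (hence the lifts satisfy the Tor-vanishing condition), invoke the vanishing of the Eisenbud operators from \cref{exact-sequence} to split the long exact sequence into short exact sequences, and finish with \cref{betti reduction}. The only difference is that you spell out the verification of $\Tor_i^Q(M',T)=0$ for $i>0$ explicitly, which the paper leaves implicit.
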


\begin{proof}  
Adopting \cref{not:3}, it is easy to see that $R\cong R'/x_n^2R'\cong R'\otimes_Q T$. By \cref{x1 reg}(1) the element $x_n^2$ is regular on $R'$. Taking $M'=R'$ and $z=x_n^2$ in \cref{exact-sequence}, we have that the $T$-module $R$ is liftable to $Q$ and we get the following short exact sequences for all $i,j\geq 0$:
\begin{equation*}
 0\to\Tor_{i-1}^{ T}(R,\kk)_{j-2}\to \Tor_{i}^{Q}(R,\kk)_j\to \Tor_{i}^{ T}(R,\kk)_{j}\to 0\,.
\end{equation*} 
This sequence, together with \cref{betti reduction}(1), yields the desired equalities.

By \Cref{A/x1}(1) we have $A\cong A'/x_n^2A'$. 
Now a similar proof using \cref{x1 reg}(2) and
\cref{betti reduction}(2) gives the desired equalities of Betti numbers of $A$.
\end{proof}

\begin{theorem}\label{c:RoverQ betti}
Let $n\geq 3$ be an odd integer and let $\ell=\frac{n-3}{2}$.  Let $Q=\kk[x_1, \dots, x_n]$, where $\kk$ is a field with $\ch\kk>n$ or $\ch\kk=0$, and consider the ring
\begin{equation*}
R = Q/\left(x^2_1, \dots, x^2_n, (x_1+\dots+ x_n)^2\right)\,.
\end{equation*} 
The Betti numbers of $R$ as a $Q$-module are:
\[
\beta_{i,j}^Q(R)=\begin{cases}
\binom{n+1}{i},&\text{ if } j=2i\,  \text{ and }\,  0\leq i \leq\ell\\
\rho_{\ell+1}(n-1)+\binom{n}{\ell},&\text{ if } j=2i=i+(\ell+1)\\
\rho_i(n-1),& \text{ if } j = i+(\ell+1)\ne 2i \text{ and }\,  0\leq i \leq n-1\\
\rho_{i-1}(n-1),& \text{ if } j = i+(\ell+2)  \text{ and }\,  1\leq i \leq n\\
0,&\text{ otherwise}\,, \end{cases}
\]
where the sequence $\{\rho_i(n-1)\}$ is defined as in \cref{not: rho}. In particular, for $n\geq 7$ and with $\rho_k=\rho_k(n-1)$ for all $0\le k\le n-1$, the Betti table of $R$ as a $Q$-module is: 
\begin{center}
\setlength{\tabcolsep}{3pt} 
\begin{tabular}{r| c c c c c c c c c c c c c c }
& \small{$0$} & \small{$1$} & \small{$2$} & \small{$3$}  & $\ldots$ & \small{$\ell$} & \small{$\ell+1$} & \small{$\ell+2$} & \small{$\ell+3$} & \small{$\ell+4$} & $\ldots$ & \small{$n-1$} & \small{$n$} \\
\hline
$0$ & $\binom{n+1}{0}$ & - & - & - & $\cdots$  & - & - & - & 
- & - & $\cdots$ & - &-\\ 
$1$ & -  & $\binom{n+1}{1}$ & - & - &$\cdots$ & - & - & - & - & - & $\cdots$ & -  & - \\ 
\vphantom{\huge{A}}$\vdots$&$\vdots$&$\vdots$&$\vdots$&$\vdots$&$\ddots$&$\vdots$&$\vdots$&$\vdots$&$\vdots$ & $\vdots$&$\vdots$&$\vdots$&$\vdots$ \\ 
\vphantom{\huge{A}} $\ell$ &  - & - & - & - & - &$\binom{n+1}{\ell}$ &- &- & - & - & $\cdots$ & - & - \\
\vphantom{\huge{A}}$\ell+1$ & - & - & $\rho_{2}$ &$\rho_3$ &$\cdots$ & $\rho_{_{\ell}}$ & $\rho_{_{\ell+1}}+\binom{n}{
\ell}$ & $\rho_{_{\ell+2}}$ & $\rho_{_{\ell+3}}$ & $\rho_{_{\ell}}$ &$\ldots$ & $\rho_{2}$ & - \\
$\ell+2$& - & - & - & $\rho_2$ &$\cdots$ &$\rho_{_{\ell-1}}$& $\rho_{_{\ell}}$ & $\rho_{_{\ell+1}}$ & $\rho_{_{\ell+2}}$ & $\rho_{_{\ell+3}}$ & $\cdots$ &  $\rho_{3}$ &$\rho_{2}$ \\
\end{tabular}
\end{center}
and the sequence $\{\rho_k\}_{k\ge 0}$ satisfies the property: 
$\rho_{\ell+1}+ \binom{n}{\ell} = \rho_{\ell+3}+\binom{n+1}{\ell+1}$.
\end{theorem}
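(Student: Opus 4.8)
The plan is to derive the Betti table of $R$ over $Q$ directly from \cref{t:RAoverQ} together with the even-variable computation of \cref{betti RQ}. Since $n$ is odd, $n-1$ is even and $\lfloor\frac{(n-1)-2}{2}\rfloor=\frac{n-3}{2}=\ell$, so \cref{betti RQ}, applied to $\overline R$ over $\overline Q=\kk[x_1,\dots,x_{n-1}]$, says the only nonzero Betti numbers of $\overline R$ over $\overline Q$ are
\[
\beta_{i,2i}^{\overline Q}(\overline R)=\binom{n}{i}\ \ (0\le i\le\ell)\qquad\text{and}\qquad \beta_{i,i+\ell+1}^{\overline Q}(\overline R)=\rho_i(n-1)\ \ (i\ge 0),
\]
and in particular $\beta_{i,j}^{\overline Q}(\overline R)=0$ whenever $j-i\notin\{0,1,\dots,\ell+1\}$. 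By \cref{t:RAoverQ}, for all $i,j$ we have $\beta_{i,j}^Q(R)=\beta_{i,j}^{\overline Q}(\overline R)+\beta_{i-1,j-2}^{\overline Q}(\overline R)$; the first summand lies in row $j-i$ of the Betti table of $\overline R$ and the second in row $(j-i)-1$, so $\beta_{i,j}^Q(R)$ can only be nonzero for $j-i\in\{0,1,\dots,\ell+2\}$.

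I would then read off each row. For $0\le j-i\le\ell$, a nonzero first or second summand forces $j=2i$, and combining the two diagonal values with Pascal's identity gives $\beta_{i,2i}^Q(R)=\binom{n}{i}+\binom{n}{i-1}=\binom{n+1}{i}$ for $0\le i\le\ell$. For $j-i=\ell+1$ the first summand contributes $\rho_i(n-1)$, while the second—living in row $\ell$ of $\overline R$, whose unique nonzero entry is the diagonal value $\binom{n}{\ell}$—is nonzero only when $j-2=2(i-1)$, i.e.\ exactly when $i=\ell+1$ and $j=2\ell+2$, where it equals $\binom{n}{\ell}$; thus $\beta_{i,i+\ell+1}^Q(R)=\rho_i(n-1)$ for $i\ne\ell+1$ and $\beta_{\ell+1,2\ell+2}^Q(R)=\rho_{\ell+1}(n-1)+\binom{n}{\ell}$. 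This is precisely the overlap forced by $n=2\ell+3$, which makes $2i=i+(\ell+1)$ at $i=\ell+1$. Finally, for $j-i=\ell+2$ the first summand vanishes (row $\ell+2$ of $\overline R$ is empty) and the second is the $\rho$-strand of $\overline R$ shifted once, giving $\beta_{i,i+\ell+2}^Q(R)=\rho_{i-1}(n-1)$. Collecting these cases, and absorbing the out-of-range values $\rho_k(n-1)=0$ (for $k>n-1$, by \cref{betti RQ}(1)) into the ``otherwise'' clause, yields the claimed piecewise formula. For $n\ge 7$ one has $\ell\ge 2$ and $\rho_0(n-1)=\rho_1(n-1)=0$, so the first two $\rho$-entries in rows $\ell+1$ and $\ell+2$ vanish; rewriting the right halves of those rows via the symmetry $\rho_k(n-1)=\rho_{n+1-k}(n-1)$ of \cref{betti RQ}(2) produces the displayed Betti table.

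It remains to verify the identity $\rho_{\ell+1}+\binom{n}{\ell}=\rho_{\ell+3}+\binom{n+1}{\ell+1}$ (with $\rho_k=\rho_k(n-1)$). Applying \cref{betti RQ}(4) to the even integer $n-1$, whose associated index is $\lfloor\frac{(n-1)-2}{2}\rfloor=\ell$, gives $\rho_{\ell+1}(n-1)=\rho_{\ell+3}(n-1)+\binom{n}{\ell+1}$. Since $\binom{n}{\ell+1}=\binom{n+1}{\ell+1}-\binom{n}{\ell}$ by Pascal's identity, substituting and rearranging gives the claim. (Through the formula just established, this reads $\beta_{\ell+1,2\ell+2}^Q(R)=\beta_{\ell+3,2\ell+4}^Q(R)+\binom{n+1}{\ell+1}$, the odd-$n$ counterpart of \cref{l: A and R over Pu}(5).)

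I do not expect a genuine obstacle, since all the substantive work is already in \cref{t:RAoverQ} and \cref{betti RQ}. The step requiring the most care is the analysis of row $\ell+1$: because $n$ is odd, the diagonal strand $j=2i$ and the $\rho$-strand $j=i+\ell+1$ of $\overline R$'s Betti table meet at $i=\ell+1$, and one must check that $\beta_{\ell+1,2\ell+2}^Q(R)$ picks up the single diagonal contribution $\binom{n}{\ell}$ from $\beta_{\ell,2\ell}^{\overline Q}(\overline R)$ exactly once and nothing more. A routine secondary point is to confirm that all positions with $j-i\notin\{0,\dots,\ell+2\}$, and all off-strand positions within that range, contribute zero.
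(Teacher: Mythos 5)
Your proposal is correct and follows essentially the same route as the paper: apply \cref{t:RAoverQ} to write $\beta_{i,j}^Q(R)=\beta_{i,j}^{\overline Q}(\overline R)+\beta_{i-1,j-2}^{\overline Q}(\overline R)$, substitute the even-case formula of \cref{betti RQ} for $\overline R$ (noting $\lfloor\frac{(n-1)-2}{2}\rfloor=\ell$), and combine the strands, with the final identity obtained exactly as in the paper from \cref{betti RQ}(4) and Pascal's identity. Your row-by-row bookkeeping, including the single overlap of the diagonal and the $\rho$-strand at $i=\ell+1$, matches the paper's case analysis.
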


\begin{proof}
Noting that $\ell=\left\lfloor{\frac{(n-1)-2}{2}}\right\rfloor$ and using \cref{betti RQ} for  the ring $\ov R$, we have  
\[
\beta_{i,j}^{\overline Q}(\overline R)=
\begin{cases} 
\binom{n}{i},&\text{ if } j=2i\,  \text{ and }\,  0\leq i \leq\ell,\\
\rho_i(n-1), &\text{ if } j= i+\ell+1 \text{ and }\,  0\leq i\leq n-1,\\
0,&\text{ otherwise. }
\end{cases}
\]
Using this formula, \cref{t:RAoverQ} yields the desired equalities for all $i,j\ge 0$:
\begin{align*}
\beta_{i,j}^Q(R)&=\beta_{i,j}^{\overline Q}(\overline R)+\beta_{i-1,j-2}^{\overline Q}(\overline R)\\
=& \begin{cases} 
\binom{n}{i},&\text{if } j=2i\,  \text{ and }\,  0\le i \leq\ell\\
\rho_i(n-1), &\text{if } j= i+\ell+1 \text{ and }\,  0\le i\leq n-1\\
0,&\text{ otherwise}
\end{cases}\\
&+\begin{cases} 
\binom{n}{i-1},&\text{if } j-2=2(i-1)\,  \text{ and }\,  0\leq i-1 \leq\ell\\
\rho_{i-1}(n-1), &\text{if } j-2= i-1+\ell+1 \text{ and }\,  0\leq i-1\leq n-1\\
0,&\text{ otherwise}
\end{cases}\\
=&\begin{cases}
\binom{n+1}{i},&\text{ if } j=2i\,  \text{ and }\,  0\leq i \leq\ell\\
\rho_{\ell+1}(n-1)+\binom{n}{\ell},&\text{ if } j=2i=i+(\ell+1)\\
\rho_i(n-1),& \text{ if } j = i+(\ell+1)\ne 2i \text{ and }\,  0\leq i \leq n-1\\
\rho_{i-1}(n-1),& \text{ if } j = i+(\ell+2)  \text{ and }\,  1\leq i \leq n\\
0,&\text{ otherwise}\,.
\end{cases}  
\end{align*}
This computation, together with the properties of the sequence $\{\rho_k(n-1)\}$ from \cref{betti RQ}, give the remaining conclusions.  In particular, using \cref{betti RQ}(4), we see  
\[
\rho_{\ell+1}(n-1) = \rho_{\ell+3}(n-1) + \binom{(n-1)+1}{\ell+1} = \rho_{\ell+3}(n-1) + \binom{n}{\ell+1}.
\] 
Adding $\binom{n}{\ell}$ to both sides of this equality yields the desired property of the sequence $\{\rho_k\}_{k\ge 0}$
\end{proof}

\begin{ex} \label{rem:n3n5bettiR}
The Betti tables of $R$ over $Q$ for $n=3$ and $n=5$, respectively, in \cref{c:RoverQ betti} are as follows: 
\begin{center}
\begin{tabular}{r| c c c c }
& $0$ & $1$ & $2$ & $3$  \\
\hline
$0$ & $1$ & - & - & -\\ 
$1$ & -  & $4$ & $2$ & - \\ 
$2$ & - & -  & $3$ &$2$ \\ 
\end{tabular}\qquad 
\begin{tabular}{r| c c c c c c}
& $0$ & $1$ & $2$ & $3$  & $4$ & $5$ \\
\hline
$0$ & $1$ & - & - & -& -&-\\ 
$1$ & -  & $6$ & - & - & - &-\\ 
$2$ & - & -  &$20$ &$16$  &$5$ &- \\ 
$3$ & - & -  &- &$15$  &$16$ &$5$ 
\end{tabular}
\end{center}
\end{ex}

\begin{theorem}\label{c:AoverQ betti}
Let $n\geq 3$ be an odd integer and let $\ell=\frac{n-3}{2}$.  Let $Q=\kk[x_1, \dots, x_n]$, where $\kk$ is a field with $\ch\kk>n$ or $\ch\kk=0$, and consider the ring
\begin{equation*}
A = Q/\left( (x^2_1, \dots, x^2_n) :(x_1+\dots+ x_n)^2\right).
\end{equation*}
The Betti numbers of $A$ as a $Q$-module are 
\[
\beta_{i,j}^Q(A)=\begin{cases} 
\binom{n}{i},&\text{ if } j = 2i  \text{ and } 0\leq  i \leq \ell-1 \text{ or  if } j = 2i-2 \text{ and }  \ell+4 \leq i\leq n\\
\gamma_{\ell}(n-1)+\binom{n-1}{\ell-1},&\text{ if } j=2i=i+\ell\\
\gamma_{\ell+2}(n-1)+\binom{n-1}{\ell+3},&\text{ if } j=2i-2=i+(\ell+1)\\
\gamma_i(n-1),&\text{ if }\ j=i+\ell\ne 2i \text{ and }\,  0\leq i\leq n-1\\
\gamma_{i-1}(n-1),&\text{ if } j=i+\ell+1\ne 2i-2 \text{ and }\,  1\leq i\leq n\\
0,&\text{ otherwise}\,,
\end{cases}
\]
where the sequence  $\{\gamma_i(n-1)\}$ is defined as in \cref{not: gamma}.

In particular, when $n\ge 7$ and with $\gamma_i=\gamma_i(n-1)$ for all $0\le i\le n-1$,  the Betti table of $A$ as a $Q$-module is:

\begin{center}
\setlength{\tabcolsep}{3.5pt} 
\begin{tabular}{r| c c c  c c c c c c c c c c c c c}
& \footnotesize$0$ & \footnotesize$1$  & \footnotesize$2$ &$\cdots$  & \footnotesize$\ell-1$ & \footnotesize$\ell$ & \footnotesize$\ell+1$ &\footnotesize$\ell+2$ &\footnotesize$\ell+3$ &\footnotesize$\ell+4$ & $\cdots$ &\footnotesize$n-2$ & \footnotesize$n-1$& \footnotesize$n$ \\
\hline
\footnotesize\footnotesize\footnotesize$0$ & \footnotesize$\binom{n}{0}$ & - & - & $\cdots$ & - & - & - & - &-&-&$\cdots$ & - & - &- \\ 
\footnotesize\footnotesize$1$ & -  &\footnotesize $\binom{n}{1}$ &-   &$\cdots$ &- &- &- &- &- &-&$\cdots$ &- &- &-\\ 
\vphantom{\huge{I}}$\vdots$&$\vdots$&$\vdots$&$\vdots$&$\ddots$&$\vdots$&$\vdots$&$\vdots$&$\vdots$&$\vdots$&\vdots &$\ddots$&$\vdots$&$\vdots$&$\vdots$\\ 
\vphantom{\huge{I}}\footnotesize$\ell-1$ & - &-  &- & $\cdots$ &\footnotesize$\binom{n}{\ell-1}$ &- &- &-&-&-&$\cdots$ & - & - &-\\
\footnotesize$\ell$& - & $\gamma_{_1}$  &$\gamma_{_2}$&$\cdots$  & $\gamma_{_{\ell-1}}$ & $\gamma_{_{\ell}}$\footnotesize$+\binom{n-1}{\ell-1}$ & $\gamma_{_{\ell+1}}$ &$\gamma_{_{\ell}}$ &$\gamma_{_{\ell-1}}$& $\gamma_{_{\ell-2}}$& $\cdots$ &$\gamma_{_{1}}$ & -&-\\
\vphantom{\huge{I}}\footnotesize$\ell+1$& - &- & $\gamma_{_1}$ &$\cdots$  & $\gamma_{_{\ell-2}}$ & $\gamma_{_{\ell-1}}$ & $\gamma_{_{\ell}}$ &$\gamma_{_{\ell+1}}$ &$\gamma_{_{\ell}}$\footnotesize$+\binom{n-1}{\ell-1}$ &$\gamma_{_{\ell-1}}$& $\cdots$ &$\gamma_{_{2}}$ &$\gamma_{_{1}}$&- \\
\vphantom{\huge{I}}\footnotesize$\ell+2$ &- & - & -&$\cdots$ & - & - &- &-&-&\footnotesize$\binom{n}{\ell+4}$&$\cdots$ & -& -&- \\ 
\vphantom{\huge{I}} \vdots&\vdots &\vdots&\vdots&$\ddots$&\vdots&\vdots&\vdots&\vdots&\vdots&$\vdots$&$\ddots$&\vdots&\vdots&\vdots\\ 
\footnotesize$n-3$ & -   & -  &- &$\cdots$ & - & - & - & - & -&- &$\cdots$ &-&\footnotesize$\binom{n}{n-1}$ & - \\
\footnotesize$n-2$ & -   & - &- &$\cdots$ & - & - & - & - & - &-&$\cdots$ & - &-&\footnotesize$\binom{n}{n}$ \\
\end{tabular}
\end{center}
and $\gamma_1=C_{\ell+2}$. 
\end{theorem}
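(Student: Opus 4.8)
The plan is to run exactly the argument used for \cref{c:RoverQ betti}, with $\overline R$ replaced by $\overline A$ and \cref{betti RQ} replaced by \cref{betti AQ}. Since $n$ is odd, $\overline A$ is a quotient of the polynomial ring $\overline Q$ in $n-1$ variables and $n-1$ is even; moreover $\left\lfloor\frac{(n-1)-2}{2}\right\rfloor=\frac{n-3}{2}=\ell$, and $(x_1+\dots+x_{n-1})^2$ is a quadratic maximal rank element of $\overline P$ by \cref{ex: wlp}. Hence \cref{betti AQ} applies verbatim to $\overline A$ (with $n$ there replaced by $n-1$) and gives
\[
\beta_{i,j}^{\overline Q}(\overline A)=\begin{cases}
\binom{n-1}{i},&\text{if } j=2i,\ 0\le i\le \ell-1,\ \text{or } j=2i-2,\ \ell+3\le i\le n-1\\
\gamma_i(n-1),&\text{if } j=i+\ell,\ 0\le i\le n-1\\
0,&\text{otherwise.}
\end{cases}
\]

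Next I would invoke \cref{t:RAoverQ}, which for odd $n\ge 3$ yields $\beta_{i,j}^Q(A)=\beta_{i,j}^{\overline Q}(\overline A)+\beta_{i-1,j-2}^{\overline Q}(\overline A)$ for all $i,j\ge 0$. Substituting the displayed formula into both summands, I would organize the contributions by the shape of $j$ relative to $i$: the first summand is nonzero on the ``quadratic'' strands $j=2i$ ($0\le i\le\ell-1$), $j=2i-2$ ($\ell+3\le i\le n-1$) and on the $\gamma$-strand $j=i+\ell$; the second summand is the same strands shifted by $(1,2)$, i.e. $j=2i$ ($1\le i\le\ell$), $j=2i-2$ ($\ell+4\le i\le n$) and $j=i+\ell+1$. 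Where two binomial contributions meet at the same $(i,j)$ they combine via Pascal's identity $\binom{n-1}{i}+\binom{n-1}{i-1}=\binom{n}{i}$, producing the entries $\binom{n}{i}$ on $j=2i$ ($0\le i\le\ell-1$) and on $j=2i-2$ ($\ell+4\le i\le n$). A binomial strand from one summand overlaps a $\gamma$-strand from the other exactly at $i=\ell$ (where $j=2i=i+\ell$) and at $i=\ell+3$ (where $j=2i-2=i+\ell+1$), giving $\gamma_\ell(n-1)+\binom{n-1}{\ell-1}$ and $\gamma_{\ell+2}(n-1)+\binom{n-1}{\ell+3}$; off these two positions, the $j=i+\ell$ strand carries $\gamma_i(n-1)$ and the $j=i+\ell+1$ strand carries $\gamma_{i-1}(n-1)$. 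Checking that these index ranges are pairwise disjoint and jointly exhaustive yields the five-case formula in the statement.

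For the table when $n\ge 7$, I would unwind the cases: the two $\gamma$-strands occupy rows $\ell$ and $\ell+1$ (the latter being the $(1,2)$-shift of the former), the quadratic strands sit in rows $0,\dots,\ell-1$ and $\ell+2,\dots,n-2$, and the overall symmetry of the table is the Gorenstein self-duality of the resolution of $A$ over $Q$ (socle degree $n-2$ by \cref{Hilb R and A}(2$a$)) — equivalently the identities $\gamma_k(n-1)=\gamma_{n-1-k}(n-1)$ and $\binom{n-1}{k}=\binom{n-1}{n-1-k}$, which also reconcile the two ways of writing the exceptional entries in rows $\ell$ and $\ell+1$. Finally, since $n-1\ge 6$, \cref{betti AQ}(1) applied to $\overline A$ gives $\gamma_1=\gamma_1(n-1)=C_{\ell+2}$; the small cases $n\in\{3,5\}$ are recorded separately and verified by direct computation as in \cref{rem:n3n5bettiR}.

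The main obstacle is entirely organizational bookkeeping: keeping the index ranges of the two summands disjoint and exhaustive near $i\in\{\ell,\ell+1,\ell+2,\ell+3\}$, where the quadratic strands of $\overline A$ and of its shift interleave with the two $\gamma$-strands, and ensuring the boundary conventions ($\gamma_0(n-1)=0$, $\gamma_{n-1}(n-1)=0$ for $n>3$, and the empty binomial ranges when $\ell$ is small) correctly yield the zeros displayed as ``-'' in the table.
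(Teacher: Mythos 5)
Your proposal is correct and follows exactly the paper's own proof: apply \cref{betti AQ} to $\overline A$ (with $n-1$ even and the same $\ell$), feed the result into the liftability formula $\beta_{i,j}^Q(A)=\beta_{i,j}^{\overline Q}(\overline A)+\beta_{i-1,j-2}^{\overline Q}(\overline A)$ from \cref{t:RAoverQ}, and merge the overlapping strands via Pascal's identity, with the two exceptional entries at $i=\ell$ and $i=\ell+3$ arising exactly as you describe. The bookkeeping you flag as the main obstacle is precisely what the paper's case-by-case superposition carries out, and your identification of $\gamma_1=C_{\ell+2}$ via \cref{betti AQ}(1) (valid since $n-1\ge 6$ when $n\ge 7$) matches the paper's final step.
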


\begin{proof}
Using \cref{betti AQ} for  the ring $\ov A$, we have
\[
\beta_{i,j}^{\overline Q}(\overline A)=
\begin{cases} 
\binom{n-1}{i},&\ \text{if}\  j = 2i \text{ and }  i \le  \ell-1     \text{ or  if } j = 2i-2  \text{ and } i \ge  \ell+3    \\
\gamma_i(n-1),&\ \text{if}\ j=i+\ell\text{ and }\,  0\leq i\leq n-1\\
0,&\ \text{otherwise\,.} 
\end{cases}
\]
This formula and \cref{t:RAoverQ} yield the following equalities of Betti numbers, for all $i, j\ge 0$:
\begin{align*}
\beta_{i,j}^Q(A)=&\beta_{i,j}^{\overline Q}(\overline A)+\beta_{i-1,j-2}^{\overline Q}(\overline A)\\
=&\begin{cases} 
\binom{n-1}{i},&\text{ if } j = 2i   \text{ and } 0\leq  i \leq \ell-1 \text{ or  if }\ j = 2i-2   \text{ and }  i \geq  \ell+3\\
\gamma_i(n-1),&\text{ if}\ j=i+\ell \text{ and }\,  0\leq i\leq n-1\\
0,&\text{ otherwise}
\end{cases}\\
&+\begin{cases} 
\binom{n-1}{i-1},&\text{ if } j= 2i   \text{ and } 1\leq  i \leq \ell \text{ or  if }\ j= 2i-2   \text{ and }  i\geq  \ell+4\\
\gamma_{i-1}(n-1),&\text{ if }\ j=i+\ell+1 \text{ and }\,  1\leq i\leq n\\
0,&\text{ otherwise}
\end{cases}\\
=&\begin{cases} 
\binom{n}{i},&\text{ if } j = 2i  \text{ and } 0\leq  i \leq \ell-1 \text{ or  if } j = 2i-2 \text{ and }  i \geq  \ell+4\\
\gamma_{\ell}(n-1)+\binom{n-1}{\ell-1},&\text{ if } j=2i=i+\ell\\
\gamma_{\ell+2}(n-1)+\binom{n-1}{\ell+3},&\text{ if } j=2i-2=i+(\ell+1)\\
\gamma_i(n-1),&\text{ if }\ j=i+\ell\ne 2i \text{ and }\,  0\leq i\leq n-1\\
\gamma_{i-1}(n-1),&\text{ if } j=i+\ell+1\ne 2i-2 \text{ and }\,  1\leq i\leq n\\
0,&\text{ otherwise}.
\end{cases}
\end{align*}
Recalling that $\gamma_0(n-1)=0$, this computation gives the desired Betti table of $A$. The fact that $\gamma_1(n-1)=C_{\ell+2}$ follows from \cref{betti AQ}(1).
\end{proof}

\begin{ex} \label{rem:n3n5bettiA}
The Betti tables of $A$ over $Q$ for $n=3$ and $n=5$,  respectively, in \cref{c:AoverQ betti} are as follows: 
\begin{center}
\begin{tabular}{r| c c c c }
& $0$ & $1$ & $2$ & $3$ \\
\hline
$0$ & $1$ & $2$ & $1$ & -\\ 
$1$ & -  &$1$ &$2$  &$1$ 
\end{tabular}
\qquad 
\begin{tabular}{r| c c c c c c}
& $0$ & $1$ & $2$ & $3$  & $4$ & $5$ \\
\hline
$0$ & $1$ & - & - & -& -&-\\ 
$1$ & -  &$10$ &$16$  &$9$ &- &-\\ 
$2$ & - & -   &$9$  &$16$ &$10$&- \\ 
$3$ & -  & - & - & - & - & $1$
\end{tabular}
\end{center}
\end{ex}

\begin{corollary}
\label{Cataln-socle-gens}
Let $n\geq 2$ be an integer and let $\ell=\left\lfloor\frac{n-2}{2}\right\rfloor$. Let $Q=\kk[x_1, \dots, x_n]$, where $\kk$ is a field with $\ch\kk>n$ or $\ch\kk=0$, and consider the ring
$R=Q/I$ and the ideal $G$, where
\begin{equation*}
    J=(x^2_1, \dots, x^2_n),\quad I=J+(x_1+\dots+ x_n)^2,\quad\text{and}\quad G=J:I\,.
\end{equation*}

\begin{enumerate}[\quad\rm(1)]
\item The ring $R$ is level, with socle in degree $n-\ell-1$, and
\[
\dim_\kk(\Soc(R))=C_{\ell+2}.
\]
\item The ideal $G/J$ is generated in degree $\ell+1$ and the minimal number of generators of $G/J$ is equal to $C_{\ell+2}$. Consequently, the ideal $G$ is generated in degrees $2$ and $\ell+1$ and, when $n\ge 4$ (and thus $\ell\ge 1$), its minimal number of generators is $n+C_{\ell+2}$. 
\end{enumerate}
\end{corollary}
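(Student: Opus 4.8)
The plan is to translate both statements into statements about graded Betti numbers and then read them off the Betti tables of $R$ and $A$ over $Q$, which were determined in \cref{betti RQ} for $n$ even and in \cref{c:RoverQ betti} for $n$ odd; the bridge between the resolution of $R$ and the module $G/J$ is the duality \cref{bettiLR}, and the one numerical value needed beyond the shape of the tables is supplied by \cref{Hilb R and A}. (All of these apply here since, by \cref{ex: wlp}, $(x_1+\dots+x_n)^2$ is a quadratic maximal rank element of $P$.)

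For part (1) and the first assertion of part (2) I would use the standard fact that, for an Artinian graded quotient $R=Q/I$ in $n$ variables, $\dim_\kk(\Soc R)_d=\beta^Q_{n,\,n+d}(R)$ for every $d$ (a consequence of graded local duality; via the isomorphism $G/J\cong\omega_R(-n)$ from the proof of \cref{bettiLR} this also equals $\beta^Q_{0,\,n-d}(G/J)$). Hence $R$ is level, and simultaneously $G/J$ is generated in a single degree, precisely when the column $i=n$ of the Betti table of $R$ over $Q$ carries a single nonzero entry. Reading that column off \cref{betti RQ} (when $n$ is even) and \cref{c:RoverQ betti} (when $n$ is odd), one finds a unique nonzero value there, located at internal degree $n+\ell+1$ when $n$ is even and at $n+\ell+2$ when $n$ is odd; since $n$ even forces $n-\ell-1=\ell+1$ and $n$ odd forces $n-\ell-1=\ell+2$, both cases put the socle of $R$ in degree $n-\ell-1$ and, by \cref{bettiLR}, put all minimal generators of $G/J$ in degree $\ell+1$. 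The common value is $\dim_\kk(G/J)_{\ell+1}=C_{\ell+2}$ by \cref{Hilb R and A}(3). The remaining small cases $n\in\{2,3,4,5\}$ can be checked against the explicit tables of \cref{rem:n2n4bettiR} and \cref{rem:n3n5bettiR}.

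For the rest of part (2), fix lifts to $Q$ of the $C_{\ell+2}$ minimal generators of $G/J$; together with $x_1^2,\dots,x_n^2$ they generate $G$, so $G$ is generated in degrees $2$ and $\ell+1$. To compute $\mu(G)$ when $n\ge 4$, apply $-\otimes_Q\kk$ to $0\to J\to G\to G/J\to 0$; the resulting exact sequence gives
\[
\mu(G)=\mu(J)+\mu(G/J)-\dim_\kk\Im\bigl(\Tor_1^Q(G/J,\kk)\to \Tor_0^Q(J,\kk)\bigr),
\]
with $\mu(J)=n$ (a regular sequence), $\mu(G/J)=C_{\ell+2}$ (just proved), and $\Tor_0^Q(J,\kk)$ concentrated in degree $2$; thus the displayed image is a quotient of $\Tor_1^Q(G/J,\kk)_2=\beta^Q_{1,2}(G/J)=\beta^Q_{n-1,\,2n-2}(R)$ by \cref{bettiLR}, and the latter vanishes because $2n-2-(n-1)=n-1>n-\ell-1=\reg_Q R$ exactly when $\ell\ge 1$, i.e.\ when $n\ge 4$. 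Hence $\mu(G)=n+C_{\ell+2}$.

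The crux is the levelness of $R$ (equivalently, that $G/J$ has no minimal generators in degrees above $\ell+1$): \cref{Hilb R and A} by itself only shows that the top socle degree of $R$ is $n-\ell-1$ and computes the dimension there, but gives no control over socle in lower degrees. That control comes only from the complete Betti table computations in \cref{betti RQ} and \cref{c:RoverQ betti} — concretely, from the observation that their final column has a single nonzero entry.
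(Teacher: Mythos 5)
Your proposal is correct and rests on the same foundation as the paper's proof, namely reading the answer off the complete Betti tables computed in \cref{betti RQ}, \cref{betti AQ}, \cref{c:RoverQ betti}, and \cref{c:AoverQ betti} (with the small $n$ handled by the explicit tables in the remarks). The only cosmetic difference is that you consult the last column of the table of $R$ together with \cref{bettiLR} and then reconstruct $\mu(G)$ from the sequence $0\to J\to G\to G/J\to 0$, whereas the paper reads both the generation of $G$ in degrees $2$ and $\ell+1$ and the count $n+C_{\ell+2}$ directly off the column $\beta_{1,\bullet}^Q(A)$ of the table of $A=Q/G$.
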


\begin{proof}
When $n\ge 6$ is even, the statements follow directly from the Betti tables  and the listed properties in \cref{betti RQ} and \cref{betti AQ}; see \cref{rem:n2n4bettiR} and \cref{rem:n2n4bettiA} for the cases $n=2$ and $n=4$.

When $n\ge 7$ is odd, the statements follow directly from the Betti table  and the listed properties in \cref{c:RoverQ betti} and \cref{c:AoverQ betti}; see \cref{rem:n3n5bettiR} and \cref{rem:n3n5bettiA} for the cases $n=3$ and $n=5$.
\end{proof}

\begin{remark}
Using the results presented here, we subsequently show in \cite{aci} that the conclusions of \cref{Cataln-socle-gens} hold more generally, namely when the ring $R$ is defined by $n+1$ general quadrics. 
\end{remark}

\section{A further study of the Gorenstein ideal $G$}
\label{G}
In this section we keep the notation in \cref{not:2}. We find a set of minimal generators of $G$ and the initial ideal of $G$ and prove that the Gorenstein ring $A$ has the Strong Lefschetz Property when $\ch\kk=0$. Note that the initial ideal of $I$ is computed in \cite{BV} and \cite{kling}.

\subsection*{The generators and initial ideal of $G$}
In what follows, we consider the action of the symmetric group $\sym_n$ on $Q$ defined by permuting the variables, namely
\[
\sigma\cdot f(x_1, \dots, x_n)=f(x_{\sigma(1)}, \dots,x_{\sigma(n)})
\]
where $\sigma\in \sym_n$ and $f\in Q$. We denote by $(f)_{\sym_n}$ the ideal
\[
(f)_{\sym_n}=(\sigma\cdot f\,\,\vert\,\, \sigma\in \sym_n).
\]
Below, we use the reverse lexicographic order $>$ on $Q$, with $x_1>x_2>\dots >x_n$. 

\begin{theorem}\label{thm: G generators}
Let $n\ge 2$ be an integer and let $\ell=\left\lfloor\frac{n-2}{2}\right\rfloor$. Let $Q=\kk[x_1,\dots,x_n]$, where $\kk$ is a field with $\ch\kk>n$ or $\ch\kk=0$, and consider the ideals
\begin{equation*}
J = (x^2_1, \dots, x^2_n) \quad \text{and}\quad G = J : (x_1+\dots+ x_n)^2.
\end{equation*}
The Gorenstein ideal  $G$ can be described as follows: 
\begin{align}\label{eq:gensG}
G = \begin{cases}
J+\left((x_1-x_2)(x_3-x_4)\,\cdots\, (x_{n-2}-x_{n-1})\right)_{\sym_n}, &\text{if $n$ is odd}\\
J+\left((x_1-x_2)(x_3-x_4)\,\cdots\, (x_{n-3}-x_{n-2})\,x_{n-1}\right)_{\sym_n},   &\text{if $n$ is even}
\end{cases}
\end{align}
and the initial ideal of $G$ is 
\[
\init_{>}(G)=J+\big(x_{i_1}x_{i_2}\dots x_{i_{\ell+1}}\,\,\vert\, \, 0<i_1<\dots<i_{\ell+1}, \, i_j\le 2j\text{ for all $1\leq j\leq \ell+1$}\big)\,.
\]
In particular, $G$ has a Gr\"obner basis generated in the same degrees as $G$. 
\end{theorem}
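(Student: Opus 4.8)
The plan is to prove both assertions at once by exhibiting an explicit Gr\"obner basis of $G$. Throughout I work modulo $J=(x_1^2,\dots,x_n^2)$, i.e.\ in $P=Q/J$, whose $\kk$-basis is the set of squarefree monomials and for which $\{x_1^2,\dots,x_n^2\}$ is a quadratic Gr\"obner basis, so $\init_>(J)=J$; note that $G=J:(x_1+\dots+x_n)^2$. Let $g$ denote the displayed candidate generator for the relevant parity of $n$, write $\sigma_1=x_1+\dots+x_n$, and let $N$ be the monomial ideal on the right-hand side of the asserted formula for $\init_>(G)$.

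\emph{Step 1: $J+(g)_{\sym_n}\subseteq G$.} The key is a telescoping computation in $P$. Each factor $x_{2k-1}-x_{2k}$ of $g$ gives $\bar g(\bar x_{2k-1}+\bar x_{2k})=0$ in $P$, because $(x_{2k-1}-x_{2k})(x_{2k-1}+x_{2k})=x_{2k-1}^2-x_{2k}^2\in J$; when $n$ is even, the extra factor $x_{n-1}$ of $g$ similarly gives $\bar g\,\bar x_{n-1}=0$. Grouping the variables accordingly in $\bar\sigma_1$, all terms but one cancel, leaving $\bar g\,\bar\sigma_1=\bar g\,\bar x_n$, and therefore $\bar g\,\bar\sigma_1^{\,2}=\bar x_n^{\,2}\bar g=0$ in $P$, i.e.\ $g\in J:\sigma_1^2=G$. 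Since $J$ and $\sigma_1^2$ are $\sym_n$-stable, so is $G$; hence $\sigma\cdot g\in G$ for every $\sigma\in\sym_n$, which proves the inclusion.

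\emph{Step 2: leading terms of the $\sigma\cdot g$.} For degree reverse lexicographic order, the revlex-largest squarefree monomial obtained by selecting one variable from each of the pairwise disjoint pairs $\{x_{\sigma(2k-1)},x_{\sigma(2k)}\}$ is the one choosing the smaller-indexed variable each time; so $\init_>(\sigma\cdot g)=\prod_k x_{\min(\sigma(2k-1),\sigma(2k))}$, times the common factor $x_{\sigma(n-1)}$ when $n$ is even. I then claim that, as $\sigma$ runs over $\sym_n$, these leading monomials are exactly the monomials $x_{i_1}\cdots x_{i_{\ell+1}}$ with $i_1<\dots<i_{\ell+1}$ and $i_j\le 2j$. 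For the forward inclusion: if the $j$-th smallest index $a_j$ of such a leading monomial were $>2j$, then the $\ell+2-j$ indices $\geq a_j$ together with their (distinct, strictly larger) partners would force too many distinct elements into $\{2j+1,\dots,n\}$, contradicting $n\leq 2\ell+3$; this is a short count in which the parity of $n$ is tracked. For the reverse inclusion: given such a monomial, $i_j\le 2j$ is precisely Hall's marriage condition for matching each $i_j$ to a strictly larger index in $[n]\setminus\{i_1,\dots,i_{\ell+1}\}$ (with $i_{\ell+1}$ taken as the unpaired index $\sigma(n-1)$ when $n$ is even), and any such matching yields an admissible $\sigma$. In particular there are exactly $C_{\ell+2}$ such monomials (the ballot count; see also \cref{Catalan-rmk}), and the elements $x_1^2,\dots,x_n^2$ together with the $\sigma\cdot g$ have leading-term ideal equal to $N$.

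\emph{Step 3: the Hilbert function of $Q/N$, and conclusion.} A squarefree monomial $x_S$ is a standard monomial modulo $N$ iff $S$ contains no subset $\{j_1<\dots<j_{\ell+1}\}$ with $j_t\le 2t$, equivalently iff $|S\cap\{1,\dots,2t\}|\le t-1$ for some $t\le\ell+1$. Encoding $S\subseteq[n]$ as a $\pm1$ lattice path (up-step at each position in $S$, down-step elsewhere), the complementary condition ``$|S\cap\{1,\dots,2t\}|\ge t$ for all $t\le\ell+1$'' is equivalent to the path staying $\ge-1$ on all of $\{0,1,\dots,n\}$; this is where the parity of $n$ must be checked, but in both cases the prefixes constrained by $t\le\ell+1$ already reach length $n-1$ or $n$. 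By the reflection principle the number of such $d$-element subsets equals $\max\{0,\binom{n}{d}-\binom{n}{d+2}\}$, so $\dim_\kk(Q/N)_d=\min\{\binom{n}{d},\binom{n}{d+2}\}=\fh_A(d)$ for all $d$, using \cref{Hilb R and A}. By Step 2, $N\subseteq\init_>(G)$, and since passing to the initial ideal preserves Hilbert functions, $\dim_\kk(Q/\init_>(G))_d=\dim_\kk(Q/G)_d=\fh_A(d)=\dim_\kk(Q/N)_d$, forcing $N=\init_>(G)$. Hence $\{x_1^2,\dots,x_n^2\}\cup\{\sigma\cdot g:\sigma\in\sym_n\}$ is a set of elements of $G$ whose leading terms generate $\init_>(G)$, so it is a Gr\"obner basis of $G$; in particular it generates $G$, giving the displayed description of $G$, and it lies in degrees $2$ and $\ell+1$, the degrees of a minimal generating set of $G$ by \cref{Cataln-socle-gens} (the cases $n=2,3$ being checked directly; cf.\ \cref{rem:n3n5bettiA}).

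\emph{Expected main obstacle.} The routine parts are the telescoping identity of Step 1 and the formal Hilbert-function comparison at the end; the substantive work is the two combinatorial facts: identifying $\{\init_>(\sigma\cdot g):\sigma\in\sym_n\}$ with the monomials satisfying $i_j\le 2j$ (a Hall-theorem argument, used in both directions), and the lattice-path enumeration showing $\dim_\kk(Q/N)_d=\fh_A(d)$. I expect the latter to be the more delicate point, chiefly because of the endpoint bookkeeping for the lattice paths, which depends on the parity of $n$.
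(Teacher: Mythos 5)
Your proof is correct, and while it shares the paper's overall architecture (exhibit the candidate generators, verify membership in $G$ via the identity $(x_{2k-1}-x_{2k})(x_{2k-1}+x_{2k})\in J$, identify the ballot monomials as leading terms, and close with a Hilbert function comparison), it differs in the execution of two key steps in ways worth noting. First, the paper proves $g_n(x_1+\dots+x_n)^2\in J$ by induction on $n$, peeling off one pair at a time, whereas your telescoping identity $\bar g\,\bar\sigma_1=\bar g\,\bar x_n$ in $P$ does it in one stroke; these are essentially the same computation packaged differently. Second, and more substantively: the paper compares Hilbert functions only in the single degree $\ell+1$ (using the Catalan count of ballot sequences) and must therefore invoke \cref{Cataln-socle-gens} --- hence the full Betti number machinery of \cref{betti AQ} and \cref{c:AoverQ betti} --- to know that $G$ and $G'$ are generated in degrees $2$ and $\ell+1$ before a single-degree comparison suffices. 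Your reflection-principle enumeration of standard monomials gives $\fh_{Q/N}(d)=\fh_A(d)$ in \emph{every} degree, so the identification $N=\init_>(G)$ and the Gr\"obner basis property follow directly from \cref{Hilb R and A} alone; you only need \cref{Cataln-socle-gens} for the final remark about generating degrees. Relatedly, you characterize the full set $\{\init_>(\sigma\cdot g)\}$ via Hall's marriage theorem in both directions, whereas the paper only needs (and only proves) one containment, constructing an explicit matching $a_j\leftrightarrow b_{j+1}$ with an inductive verification that $a_j<b_{j+1}$ --- which is exactly your Hall condition made explicit for interval subsets. The net effect is that your argument is more self-contained and yields the initial ideal statement somewhat more directly, at the cost of the extra lattice-path bookkeeping (whose parity issues you correctly flag: for $n$ odd the constraint reaches only position $n-1$, but evenness of the height there rescues the equivalence).
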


\begin{proof}
Let $G'$ denote the ideal on the right-hand side of the equation \eqref{eq:gensG}. First, we show that $G' \subseteq G$. Set
\[
g_n=\begin{cases}
(x_1-x_2)(x_3-x_4)\cdots (x_{n-2}-x_{n-1}),&\text{if $n$ is odd}\\(x_1-x_2)(x_3-x_4)\cdots (x_{n-3}-x_{n-2})x_{n-1}, &\text{if $n$ is even}
\end{cases}
\]
and set $h_n=x_1+\dots+x_n$ and $p_n=(h_n)^2$.  Since $G=J : (p_n)$ and $G'=J+(g_n)_{\sym_n}$, the inclusion $G'\subseteq G$ holds if and only if $(\gamma\cdot g_n)p_n\in J$ for all $\gamma\in \sym_n$. 

Let $\sigma\in \sym_n$. Since $p_n$ is a symmetric polynomial, notice that we have equalities 
\begin{equation*}
\sigma\cdot (g_np_n)=(\sigma\cdot g_n)(\sigma\cdot p_n)=(\sigma\cdot g_n)p_n\,.
\end{equation*}
Thus the inclusion $G'\subseteq G$ holds if and only if $\gamma\cdot(g_np_n)\in J$ for all $\gamma\in \sym_n$. Also, since $J$ is a symmetric ideal, observe that
\begin{equation*}
g_np_n\in J\iff \sigma\cdot (g_np_n)\in J\iff \gamma\cdot (g_np_n)\in J \quad \text{for all}\quad  \gamma\in \sym_n\,.
\end{equation*}

Putting together the observations above, we have thus:
\begin{equation}
\label{sigma}
G'\subseteq G \iff g_np_n\in J\iff
(\sigma\cdot g_n)p_n\in J\quad\text{\!for some $\sigma\in\sym_n$}\,.
\end{equation}

Now we use induction on $n$ to show that $g_np_n\in J=(x_1^2 \dots, x_n^2)$, handling the case where $n$ is even and the case where $n$ is odd simultaneously. 

When $n$ is even, the base case is $n=2$. In this case, $g_2=x_1$ and it is clear that
\begin{align*}
 g_2p_2=x_1(x_1+x_2)^2\in (x_1^2,x_2^2)\,.
\end{align*}
When $n$ is odd, the base case is $n=3$. In this case, $g_3=x_1-x_2$ and it is clear that
\[
g_3p_3=(x_1-x_2)(x_1+x_2+x_3)^2\in (x_1^2, x_2^2,x_3^2)\,.
\]
For the inductive step, assume that $g_{n-2}p_{n-2}\in (x_1^2, \dots, x_{n-2}^2)$. Observe that there exists a permutation $\sigma\in \sym_n$ such that 
\[
\sigma\cdot g_n=g_{n-2}(x_{n-1}-x_n).
\]
In view of \eqref{sigma}, it suffices to show $(\sigma\cdot g_n)p_n\in J=(x_1^2, \dots, x_n^2)$. Indeed, we have:
\begin{align*}
  (\sigma\cdot g_n)p_n&=g_{n-2}(x_{n-1}-x_n)p_n\\
  &=g_{n-2}(x_{n-1}-x_n)(h_{n-2}+x_{n-1}+x_n)^2 \\
  &=g_{n-2}(x_{n-1}-x_n)\left(p_{n-2}+(x_{n-1}+x_n)(2h_{n-2}+x_{n-1}+x_n)\right)\\
  &=g_{n-2}p_{n-2}(x_{n-1}-x_n)+g_{n-2}(x_{n-1}-x_n)(x_{n-1}+x_n)(2h_{n-2}+x_{n-1}+x_n).
\end{align*}
In the sum above, the first summand is in $J$ since $g_{n-2}p_{n-2}\in (x_1^2, \dots, x_{n-2}^2)$ by the induction hypothesis. The second summand is in $J$ since $(x_{n-1}-x_n)(x_{n-1}+x_n)\in(x_{n-1}^2, x_n^2)$. Hence $(\sigma\cdot g_n)p_n\in J=(x_1^2, \dots, x_n^2)$. Thus $G'\subseteq G$ by \eqref{sigma}. 

Observe from \eqref{eq:gensG} that $G'$ is generated in degrees $2$ and $\ell+1$ and the same is true for $G$ by \cref{Cataln-socle-gens}.  Thus we have that  $G'_i=G_i=J_i$ for $i<\ell+1$ and neither $G'$ nor $G$ have minimal generators in degrees higher than $\ell+1$.  Hence, to show $G'=G$ it suffices to show that $\dim_\kk G'_{\ell+1}=\dim_\kk G_{\ell+1}$. In view of the inclusion $G'\subseteq G$ proved above, we know the inequality $\dim_\kk G'_{\ell+1}\le \dim_\kk G_{\ell+1}$. Thus to show $G'=G$, it suffices to show the reverse inequality, which is equivalent to 
\begin{equation}
\label{GG'}
\fh_{Q/G'}(\ell+1)\le \fh_{Q/G}(\ell+1).
\end{equation}
We set $\mathcal G=J+\mathcal G'$, where
\begin{align*}
\mathcal G'&=\big(x_{i_1}x_{i_2}\dots x_{i_{\ell+1}}\,\,\vert\, \, 0<i_1<\dots<i_{\ell+1}, \, i_j\le 2j\text{ for all $1\leq j\leq \ell+1$}\big).
\end{align*}
Now we claim the following inclusion and equality of Hilbert functions:
\begin{align}
    \mathcal G&\subseteq \init_>(G') \label{claim1}\\
    \fh_{Q/\mathcal G}(\ell+1)&=\fh_{Q/G}(\ell+1).\label{claim2}
\end{align}
Assuming the claim, we have the following sequence of (in)equalities:
\begin{align}
\fh_{Q/G}(\ell+1)&=\fh_{Q/\mathcal G}(\ell+1)\nonumber\\
&\ge \fh_{Q/\init_{>}(G')}(\ell+1)\label{sequence-ineq}\\
&=\fh_{Q/G'}(\ell+1).\nonumber
\end{align}
where the first equality follows from \eqref{claim2}, the second inequality follows from \eqref{claim1}, and the third equality is well-known (see for example \cite{Eis}).  In view of \eqref{GG'}, we conclude that $G=G'$. Furthermore, this equality also implies that the inequality in \eqref{sequence-ineq} becomes an equality, and hence $\mathcal G=\init_{>}(G')=\init_{>}(G)$. 

Now we prove the claim. It is known (see for example \cite[Exercise~77]{stanley-catalan}) that the number of sequences of integers  $0<i_1<\dots<i_{\ell+1}$ with $i_j\le 2j$ for all $1\leq j\leq \ell+1$ is equal to the Catalan number $C_{\ell+2}$  and hence 
\begin{equation}\label{e:G/Jcatalan}
    \dim_\kk\mathcal (\mathcal G/J)_{\ell+1}=\dim_\kk\mathcal (\mathcal G')_{\ell+1}=C_{\ell+2}\,.
\end{equation}
Now we have the following equalities:
\begin{align*}
\fh_{Q/\mathcal G}(\ell+1)=\fh_P(\ell+1)-\fh_{\mathcal{G}/J}(\ell+1) 
=\binom{n}{\ell+1}-C_{\ell+2}
=\binom{n}{\ell+3}
=\fh_{Q/G}(\ell+1),
\end{align*}
where the first equality follows from the fact that $Q/\mathcal G\cong P/(\mathcal G/J)$, the second follows from \eqref{e:G/Jcatalan}, the third follows from Pascal's identity and symmetry of binomial coefficients in the even and odd cases, and the last equality follows from \cref{lem:ehRhA bar}.  This completes the proof of \eqref{claim2}.

It remains to show the inclusion $\mathcal G\subseteq \init_{>}G'$ in \eqref{claim1}. Let $0<a_1<\dots<a_{\ell+1}$ with  $a_j\le 2j$ for all $1\le j\le \ell+1$. We want to construct a polynomial in $G'$ whose leading term is equal to $x_{a_1}x_{a_2}\dots x_{a_{\ell+1}}$. 

First assume that $n=2k$ is even. Then $k=\ell+1$.
Let $\{1,\dots,n\}\smallsetminus \{a_1, \dots, a_k\}=\{b_1, \dots, b_k\}$ where $b_1<b_2<\cdots <b_k$ and consider
\[
p=(x_{a_1}-x_{b_2})(x_{a_2}-x_{b_3})\cdots (x_{a_{k-1}}-x_{b_{k}})x_{a_k}.
\]
We claim that the leading term of $p$ is $x_{a_1}x_{a_2}\dots x_{a_{k}}$. Indeed, the terms in $p$ have the form $x_{i_1}x_{i_2}\dots x_{i_{k-1}}x_{a_k}$ where $i_j\in \{a_j,b_{j+1}\}$ for all $1\le j\le k-1$. To see that all these terms are less than $x_{a_1}x_{a_2}\dots x_{a_{k}}$ in the reverse lexicographic order, it suffices to show that $a_j<b_{j+1}$ for all $1\le j\le k-1$. We prove this by induction on $j$. 

For the base case, assume that $j=1$. By construction, it follows that $0<a_1\le 2$. If $a_1=1$, it is clear that $a_1<b_2$. If $a_1=2$, it follows that  $b_1=1$. Since $b_1<b_2$ and $a_1\ne b_2$ we must have $b_2\ge 3$.  Thus $a_1<b_2$ in either case.

Now assume $j\ge 2$ and  $a_{j-1}<b_{j}$. Observe that we have the following inequalities
\begin{align*}
b_j>a_{j-1}>a_{j-2}>\dots >a_1\quad\text{and}\quad
b_j>b_{j-1}>b_{j-2}>\dots>b_1,
\end{align*}
and hence $b_j$ is greater than $2(j-1)$ distinct integers in the set $\{1,\dots,n\}$. Therefore $b_j>2(j-1)$ and thus $b_j\ge 2j-1$. Since $b_{j+1}>b_j$ we have inequalities
\[
a_j \le 2j\le b_{j+1}.
\]
Since $a_j\ne b_{j+1}$, we conclude that $a_j<b_{j+1}$, which completes induction and thus the proof of \eqref{claim1} when $n$ is even. 

Now assume that $n=2k+1$ is odd. Then $k=\ell+1$ as in the even case. Let $\{1,\dots,n\}\smallsetminus \{a_1, \dots, a_k\}=\{b_1, \dots, b_k,b_{k+1}\}$ where $b_1<\dots<b_{k+1}$ and consider
\[
p=(x_{a_1}-x_{b_2})(x_{a_2}-x_{b_3})\dots (x_{a_k}-x_{b_{k+1}}).
\]
Proceed as above to show that $a_j<b_{j+1}$ by induction for all $1\le j\le k$ and conclude that the leading term of $p$ is equal to $x_{a_1}\dots x_{a_k}$. This completes the proof of \eqref{claim1} when $n$ is odd. 
\end{proof}

We end the paper with a discussion of the Macaulay inverse system of $G$, which is then used to give an easy proof that the Gorenstein ring $A$ has the Strong Lefschetz Property when $\ch \kk=0$.  

\subsection*{The Macaulay inverse system of $G$ and the SLP}
Let $\kk$ be a field, $Q=\kk[x_1, \dots, x_n]$, and let  $S=\kk[y_1, \dots, y_n]_{DP}$ denote the divided powers algebra in $n$ divided powers variables of degree $-1$ over $\kk$. We equip $S$ with a $Q$-module structure, called {\it contraction}, defined by extending the following action on monomials, by linearity in both arguments: 
\begin{equation*}
\label{eq:contraction}
x_1^{d_1}x_2^{d_2}\dots\, x_n^{d_n}\circ y_1^{(e_1)}y_2^{(e_2)}\dots\, y_n^{(e_n)} = \begin{cases}
y_1^{(e_1-d_1)}y_2^{(e_2-d_2)}\dots\, y_n^{(e_n-d_n)}, & \text{if } e_i-d_i\geq 0 \text{ for all } i\\
0, & \text{otherwise\,. }
\end{cases}
\end{equation*}
To simplify notation, we write $y_i$ instead of $y_i^{(1)}$. 

If $L$ is a homogeneous ideal of $Q$, then the {\it Macaulay inverse system} of $L$ is the graded $Q$-submodule of $S$ given by 
\begin{equation}
\label{eq:Iperp}
L^{-1}:=\{ g\in S \,\,\vert\,\, f\circ g=0 \text{ for all } f\in L\}. 
\end{equation}
It is well-known that $S$ is an injective hull of $\kk$ over $Q$. As a consequence, $L^{-1}$ is cyclic if and only if $Q/L$ is Gorenstein Artinian and 
$I=\Ann_Q(L^{-1})$; see for example \cite{IK}. 

An explicit generator of the inverse system of the Gorenstein ideal $G$ is known; we recall this in the following remark. 

\begin{remark}
\label{inverse}
By \cite[Lemma 2.7]{MRRG}, the Macaulay inverse system $G^{-1}$ of the  ideal $G$ is generated as a $Q$-module by the form  
\[
(x_1+\dots+x_n)^2\circ (y_1y_2\dots y_n)=\sum_{1\le i_1<\dots<i_{n-2}\le n} 2y_{i_1}y_{i_2}\dots y_{i_{n-2}}\,.
\]
\end{remark}

The inverse system of a Gorenstein ideal of $Q$ can be used to decide whether the corresponding quotient ring satisfies the SLP. We recall the relevant criterion below, after introducing Hessians, and then apply it to prove the SLP for the Gorenstein ring $A$. 

\begin{definition}
\label{hessian}
Let $d\ge 1$ be an integer and let $F$ be a homogeneous polynomial of degree $d$ in the divided powers algebra $S$ and set $A_F=Q/\Ann_Q(F)$. For an integer $i\ge 0$ we  define the $i$th Hessian matrix of $F$, denoted $\Hess^i(F)$, as follows. The rows and columns of the matrix are indexed by a monomial basis $B$ of $(A_F)_i$ and 
\[
\Hess^i(F)_{u,v}\coloneq (uv)\circ F\qquad \text{for $u,v\in B$},
\]
where we make the convention that, if $w\in A_F$, then $w\circ F\coloneq w'\circ F$, where $w'\in Q$ is the pre-image of $w$.
\end{definition} 

Next we recall a criterion for the Strong Lefschetz Property in \cite[Theorem 3.1]{Maeno-Watanabe}. 
\begin{theorem}[{\bf Hessian Criterion for SLP}, see \cite{Maeno-Watanabe}]
\label{hessian-criterion}
With the notation in \cref{hessian}, if $\ch\kk=0$ then an element $a_1x_1+\dots+a_nx_n\in (A_F)_1$ is a Strong Lefschetz element of $A_F$ if and only if 
\[
\det(\Hess^i(F))(a_1,\dots, a_n)\ne 0 \quad \text{ for all}\quad 0\le i\le \left\lfloor\frac{d}{2}\right\rfloor.
\]
\end{theorem}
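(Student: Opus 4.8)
The plan is to convert the Strong Lefschetz condition for $\ell = a_1x_1+\dots+a_nx_n$ into a statement about nondegeneracy of a family of symmetric bilinear forms on the graded pieces $(A_F)_i$ for $0\le i\le\lfloor d/2\rfloor$, and then recognize the Gram matrices of these forms, in a monomial basis, as the Hessian matrices $\Hess^i(F)$ evaluated at $(a_1,\dots,a_n)$ up to a nonzero scalar.

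\textbf{Step 1: reduction to the middle maps.} First I would show that $\ell$ is a Strong Lefschetz element of $A_F$ if and only if the multiplication map $\times\ell^{d-2i}\colon (A_F)_i\to (A_F)_{d-i}$ is bijective for every $0\le i\le\lfloor d/2\rfloor$. The forward implication is immediate: $A_F$ is Gorenstein with socle degree $d$, so $\dim_\kk (A_F)_i=\dim_\kk (A_F)_{d-i}$, and "maximal rank" forces bijectivity. The converse is the standard argument: an arbitrary power map $\times\ell^k\colon (A_F)_j\to (A_F)_{j+k}$ factors through a middle map, and Gorenstein duality lets one pass between injectivity below and surjectivity above the middle degree (see \cite{Maeno-Watanabe}). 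Note that this step does not use $\ch\kk=0$.

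\textbf{Step 2: from bijectivity to nondegeneracy of a bilinear form.} Fix $i$ with $0\le i\le\lfloor d/2\rfloor$. Since $A_F=Q/\Ann_Q(F)$ is Artinian Gorenstein with socle degree $d$, the pairing $(A_F)_i\times (A_F)_{d-i}\to\kk$, $(u,w)\mapsto (uw)\circ F$, is perfect by Macaulay duality (see \cite{IK}; concretely, if $Q_{d-i}\circ(u\circ F)=0$ then the nonzero form $u\circ F\in S_{d-i}$ would be annihilated by all of $Q_{d-i}$, forcing $u\circ F=0$, i.e. $u=0$ in $A_F$). Composing this pairing with $\times\ell^{d-2i}$ shows that $\times\ell^{d-2i}$ is bijective if and only if the symmetric bilinear form $\phi_i\colon (A_F)_i\times (A_F)_i\to\kk$ given by $\phi_i(u,v)=(uv\ell^{d-2i})\circ F=(uv)\circ(\ell^{d-2i}\circ F)$ is nondegenerate, equivalently its Gram matrix $[\,(uv)\circ(\ell^{d-2i}\circ F)\,]_{u,v\in B}$ in the monomial basis $B$ of $(A_F)_i$ is invertible.

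\textbf{Step 3: identifying the Gram matrix with the Hessian.} The last and key step is the identity relating this Gram matrix to $\Hess^i(F)$. Passing harmlessly, since $\ch\kk=0$, to the polynomial-ring picture in which contraction by $x_j$ becomes $\partial/\partial y_j$, the linear form $\ell$ acts as the directional derivative $D_a$ along $a=(a_1,\dots,a_n)$, so $\ell^{d-2i}\circ F=D_a^{d-2i}F$. For monomials $u,v\in B$ the corresponding Gram entry becomes $(uv)\circ(\ell^{d-2i}\circ F)=D_a^{d-2i}\big((uv)\circ F\big)$, and since $(uv)\circ F$ is homogeneous of degree $d-2i$, Taylor's formula (expanding $h(y+ta)$ around $y=0$) gives $D_a^{d-2i}\big((uv)\circ F\big)=(d-2i)!\,\big((uv)\circ F\big)(a_1,\dots,a_n)$. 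Hence the Gram matrix of $\phi_i$ equals $(d-2i)!\cdot\Hess^i(F)(a_1,\dots,a_n)$ entrywise; since $(d-2i)!\ne 0$ in characteristic zero, it is invertible if and only if $\det(\Hess^i(F))(a_1,\dots,a_n)\ne 0$. Chaining the equivalences of Steps 1--3 over all $0\le i\le\lfloor d/2\rfloor$ yields the theorem. The main obstacle I anticipate is the bookkeeping in Step 3: one must track the normalization built into the divided-powers/contraction formalism (the binomial and factorial factors) against the polynomial/derivation formalism and verify that the discrepancy is a single nonzero scalar depending only on $d$ and $i$ — which is exactly where the hypothesis $\ch\kk=0$ is indispensable. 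A secondary, purely organizational difficulty is stating the Step 1 reduction cleanly for both parities of $d$.
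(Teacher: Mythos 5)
The paper does not prove this statement at all: it is quoted verbatim from Maeno--Watanabe \cite{Maeno-Watanabe} (their Theorem 3.1) and used as a black box in \cref{A-has-SLP}, so there is no internal proof to compare against. Your reconstruction is, in substance, the standard argument behind that citation, and it is correct: the reduction of SLP for a graded Artinian Gorenstein algebra to bijectivity of the middle maps $\ell^{d-2i}\colon (A_F)_i\to(A_F)_{d-i}$ (using the symmetric Hilbert function and the factorization of $\ell^k$ through a middle map), the translation of bijectivity into nondegeneracy of the symmetric form $\phi_i(u,v)=(uv\ell^{d-2i})\circ F$ via the perfect Macaulay pairing $(A_F)_i\times(A_F)_{d-i}\to\kk$, and the identification of the Gram matrix of $\phi_i$ with $(d-2i)!\,\Hess^i(F)(a_1,\dots,a_n)$ via $D_a^{e}h=e!\,h(a)$ for $h$ homogeneous of degree $e$. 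One small point to make explicit in Step 3: the assertion that the discrepancy is the single scalar $(d-2i)!$ presupposes the characteristic-zero identification of the divided powers algebra with the polynomial ring sending $y^{(\alpha)}\mapsto y^{\alpha}/\alpha!$ (under which contraction becomes differentiation and under which ``evaluation'' of $(uv)\circ F$ is interpreted); with a naive substitution $y^{(\alpha)}\mapsto a^{\alpha}$ the factorial factors would vary monomial by monomial. This is a convention issue rather than a gap, and it is invisible in the paper's only application (\cref{A-has-SLP}), where every monomial occurring in $(uv)\circ F$ is square-free; still, since you flag the bookkeeping as the main risk, stating the identification once at the outset of Step 3 would close it cleanly.
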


It is known that the Gorenstein ring linked to an almost complete intersection generated by $n+1$ general quadrics has the SLP; see \cite[Corollary 2.7]{MM}. It is however not easily inferred from the proof of this result that our almost complete intersection ring $R$ is parametrized by an element in the open set on which the property holds. We provide a straightforward way to show the SLP for $A$ using the Hessian criterion below. 

\begin{proposition}
\label{A-has-SLP}
Let $n\ge 2$ be an integer and $Q=\kk[x_1,\dots,x_n]$, where $\kk$ is a field with $\ch\kk=0$. Then the Gorenstein ring  
\begin{equation*}
A = Q/\left( (x^2_1, \dots, x^2_n) :(x_1+\dots+ x_n)^2\right)
\end{equation*}
has the Strong Lefschetz Property. 
\end{proposition}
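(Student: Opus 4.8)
The plan is to verify the Strong Lefschetz Property for $A$ via the Hessian Criterion (\cref{hessian-criterion}), testing the candidate linear form $x_1+\dots+x_n$, i.e.\ the point $(a_1,\dots,a_n)=(1,\dots,1)$. By \cref{inverse}, $A=A_F$ with $F=(x_1+\dots+x_n)^2\circ(y_1\cdots y_n)$, which up to the nonzero scalar $2$ is the elementary symmetric form $e_{n-2}(y_1,\dots,y_n)=\sum_{|S|=n-2}\prod_{j\in S}y_j$, a form of degree $d=n-2$. Since $\lfloor d/2\rfloor=\ell$, it suffices to show $\det(\Hess^i(F))(1,\dots,1)\ne 0$ for each $0\le i\le\ell$.

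I would first fix a basis of $(A)_i$. Since $J\subseteq G$ and $G/J$ is generated in degrees $\ge\ell+1$ by \cref{Hilb R and A}(2), the algebra $A=P/(G/J)$ coincides with $P$ in degrees $<\ell+1$, so for $0\le i\le\ell$ the squarefree monomials $x_K$ with $K\subseteq[n]$ and $|K|=i$ form a basis $B$ of $(A)_i$, of cardinality $\binom{n}{i}=\fh_A(i)$. With respect to $B$ the Hessian entries are computed directly from the contraction rule: if $K\cap L\ne\emptyset$, then $x_Kx_L$ is divisible by some $x_j^2\in J\subseteq G$, hence $\Hess^i(F)_{x_K,x_L}=(x_Kx_L)\circ F=0$; and if $K\cap L=\emptyset$, then $(x_{K\cup L})\circ F=2\,e_{n-2-2i}\bigl(\{y_j:j\notin K\cup L\}\bigr)$, a form of degree $n-2-2i\ge 0$ in the $n-2i$ remaining variables.

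Next I would evaluate. As evaluation at a point is a ring homomorphism, $\det(\Hess^i(F))(1,\dots,1)=\det\bigl(\Hess^i(F)(1,\dots,1)\bigr)$, and substituting $y_j=1$ for all $j$ turns the matrix into $2\binom{n-2i}{2}\,N$, where $N$ is the matrix indexed by the $i$-subsets of $[n]$ with $N_{K,L}=1$ if $K\cap L=\emptyset$ and $N_{K,L}=0$ otherwise. The constraint $0\le i\le\ell$ gives $2i\le 2\ell<n$, so $2\binom{n-2i}{2}$ is a nonzero integer and it remains to prove $\det N\ne 0$. Writing $N_{K,L}=[K\subseteq[n]\setminus L]$ exhibits $N=W\Pi$, where $\Pi$ is the permutation matrix of set complementation and $W$ is the $\binom{n}{i}\times\binom{n}{n-i}$ set-inclusion matrix of $i$-subsets into $(n-i)$-subsets; since $2i<n$, this square matrix has full rank by the classical rank formula for inclusion matrices (equivalently, $N$ is the adjacency matrix of a Kneser graph, whose eigenvalues $(-1)^j\binom{n-i-j}{i-j}$ for $0\le j\le i$ are all nonzero because $n>2i$), hence $N$ is invertible. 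Therefore $\det(\Hess^i(F))(1,\dots,1)\ne 0$ for all $0\le i\le\ell$, and \cref{hessian-criterion} shows $x_1+\dots+x_n$ is a Strong Lefschetz element of $A$.

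The main obstacle is the bookkeeping in the Hessian computation — choosing the right basis of $(A)_i$ and evaluating $(x_Kx_L)\circ F$ — after which identifying the specialized matrix and invoking a clean combinatorial fact (invertibility of the disjointness, equivalently inclusion, matrix) concludes the argument. The degenerate case $n=2$, where $A=\kk$ by \cref{n2}, has socle degree $0$ and is trivial.
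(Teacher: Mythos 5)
Your proposal is correct and follows essentially the same route as the paper: apply the Hessian criterion to $x_1+\dots+x_n$ using the inverse system generator $e_{n-2}(y)$, take the squarefree monomials as a basis of $A_i$, and reduce to the invertibility of the set-disjointness matrix. The only difference is cosmetic — the paper cites the invertibility of the disjointness matrix from the communication-complexity literature, whereas you justify it self-containedly via the Kneser graph eigenvalues $(-1)^j\binom{n-i-j}{i-j}$ (nonzero since $2i<n$), which is a perfectly valid substitute.
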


\begin{proof} 
Using \cref{inverse} and the fact that $\ch \kk=0$, we have $A=Q/\Ann_Q(F)$, where 
\[
F=\sum_{1\le i_1<\dots<i_{n-2}\le n} y_{i_1}y_{i_2}\dots y_{i_{n-2}}\,.
\]
To prove that $A$ has the SLP, we will apply \cref{hessian-criterion} for the element $x_1+x_2+\dots+x_n\in A_1$ with $d=n-2$. 

Setting $\ell=\left\lfloor\frac{n-2}{2}\right\rfloor$, we need to show that the Hessian matrix $\Hess^i(F)$ has full rank for all $0\le i\le \ell$, when evaluated at $y_1=1, \dots, y_n=1$. Fix $0\le i\le \ell$ and set 
\[
C=(\Hess^i(F))(1,1,\dots, 1).
\]
To compute $\Hess^i(F)$, observe that the basis $B$  of $A_i$ in \cref{hessian} consists of all square-free monomials in $Q_i$, as can be seen from \cref{Cataln-socle-gens}(2).  Let $u$ and $v$ be elements of $B$, and recall that the $(u,v)$ entry of $\Hess^i(F)$ is equal to $uv\circ F$. When $uv$ is not square-free, this entry is equal to $0$ . When $uv$ is square-free, $uv\circ F$ is equal to the sum of all square-free monomials of $S$ of degree $n-2-2i$ whose support is disjoint from the support of $uv$. Thus we have the following equality:
\[
C_{uv}=\left((uv)\circ F\right)(1,1,\dots, 1)=\begin{cases}
   0 &\text{if $\Supp(u)\cap \Supp(v)\ne\emptyset$} \\
   \binom{n-2i}{2} &\text{if $\Supp(u)\cap \Supp(v)=\emptyset$}\,.
\end{cases}
\] 
After factoring out $\binom{n-2i}{2}$, which is nonzero, the matrix $C$ is the binary matrix associated to the $i$-disjointness function, which is known to be invertible by \cite[Example 2.12]{comm-complexity}. 
\end{proof}

\section*{Acknowledgments} 
The project started at the meeting “Women in Commutative Algebra II” (WICA II) at CIRM Trento, Italy, October 16–20, 2023. The authors would like to thank the CIRM and the organizers for the invitation and financial support through the NSF grant DMS–2324929.
Support for attending the workshop also came from the AWM Travel Grants Program funded by NSF Grant DMS-2015440, Clay Mathematics Institute, and COS of Northeastern University.

This material was also partly  supported by the National Science Foundation under Grant No.\@ DMS-1928930 and by the Alfred P. Sloan Foundation under grant G-2021-16778, while the fifth author was in residence at the Simons Laufer Mathematical Sciences Institute (formerly MSRI) in Berkeley, California, during the Spring 2024 semester.  The first author was partially supported by the National Science Foundation Award No. 2418637. 

We would like to thank Mats Boij and Hailong Dao for conversations that took place at SLMath that helped shape the results and proofs in \cref{x1 reg}(1), respectively \cref{t:RAoverQ}.

\bibliographystyle{abbrv}
\bibliography{references}

\end{document}